\documentclass[11pt,a4paper,reqno]{amsart}
\usepackage{mathrsfs}
\usepackage{bigints}
\usepackage{amsmath,amsfonts,verbatim}
\usepackage{mathtools}
\usepackage{latexsym}
\usepackage{amssymb,leftidx}
\usepackage{extarrows}
\usepackage{overpic}
\usepackage{color}
\usepackage{epsfig}
\usepackage{subfigure}
\usepackage{tikz}
\usepackage{bbm}
\usepackage{tikz, caption}
\usepackage{physics}
\usepackage[font=small,labelfont=bf]{caption}
 \mathtoolsset{showonlyrefs=true}
\usepackage[colorlinks=true,backref=page]{hyperref}
\hypersetup{urlcolor=blue, citecolor=blue}
\usepackage{geometry}
\usepackage{appendix}
\usepackage{amssymb}
\usepackage{mathrsfs}
\usepackage{amscd}
\usepackage{cite}
\newcommand\R{\mathbb{R}}
\newcommand\C{\mathbb{C}}

\usepackage{graphicx}
\usepackage{float}

\numberwithin{equation}{section}
\newtheorem{proposition}{Proposition}[section]

\newtheorem{lemma}{Lemma}[section]
\newtheorem{theorem}{Theorem}[section]
\newtheorem{corollary}{Corollary}[section]
\newtheorem{assumption}{Hypothesis}[section]
\newtheorem{remark}{Remark}[section]

\begin{document}

\title[MAGNETIC UNCERTAINTY IN VARIABLE GEOMETRY]{MAGNETIC UNCERTAINTY IN VARIABLE GEOMETRY}

\author{Luca Fanelli}
\address{Luca Fanelli\newline\indent Ikerbasque, Basque Foundation for Science; Departamento de Mathem\'aticas, Universidad del Pa\'is Vasco/Euskal Herriko Unibertsitatea PV/EHU 48940 Leioa, Spain; BCAM-Basque Center for Applied Mathematics,48009 Bilbao, Spain}
\email{lfanelli@bcmath.org}
\author{Yilin Song}
\address{Yilin Song
	\newline \indent The Graduate School of China Academy of Engineering Physics,
		Beijing 100088,\ P. R. China}
\email{songyilin21@gscaep.ac.cn}
	
\author[Y. Wang]{Ying Wang}
\address{Ying Wang\newline\indent 
  BCAM - Basque Center for Applied Mathematics, 48009, Bilbao, Spain}
\email{ywang@bcamath.org}

\author{Jiqiang Zheng}
\address{Jiqiang Zheng
		\newline \indent Institute of Applied Physics and Computational Mathematics,
		Beijing, 100088, China.
		\newline\indent
		National Key Laboratory of Computational Physics, Beijing 100088, China}
\email{zheng\_jiqiang@iapcm.ac.cn, zhengjiqiang@gmail.com}

\author{Ruihan Zhou}
\address{Ruihan Zhou
	\newline \indent Institute of Applied Physics and Computational Mathematics,
	Beijing, 100088, China.}
\email{19210180085@fudan.edu.cn}

\begin{abstract}
In this paper, we study Hardy-type uncertainty principles and unique continuation properties for linear covariant Schr\"odinger equations with variable coefficients in the presence of bounded electric and magnetic potentials. Under suitable smallness assumptions on the leading coefficients, we prove that any solution exhibiting super-quadratic exponential decay at two distinct times must vanish identically. Under an additional structural assumption on the coefficient matrix $G$, we further establish a Hardy-type result at the quadratic exponential scale. We also obtain an analogous uniqueness result for the heat equation with variable-coefficient magnetic perturbations.

Our results unify and extend previous works in two directions: they recover the constant-coefficient covariant case treated by Barc\'elo--Fanelli--Guti\'errez--Ruiz--Vilela when $G=I$, and the variable-coefficient non-magnetic case considered by Federico--Li--Yu when $A=0$. The proofs combine logarithmic convexity arguments with Carleman estimates adapted to variable-coefficient covariant Schr\"odinger and parabolic flows. Although our approach follows the general strategy introduced by Escauriaza--Kenig--Ponce--Vega, substantial new difficulties arise from the interaction between the variable metric and the magnetic structure, which requires new weight functions and refined commutator estimates.
\end{abstract}

\maketitle

\begin{center}
\begin{minipage}{130mm}
{\small {{\bf Key Words:} Uncertainty principle; variable-coefficient magnetic Schr\"odinger equation; Carleman estimate; uniqueness.}}\\
{\small {\bf AMS Classification:} {35Q55.}}
\end{minipage}
\end{center}

\section{Introduction}

In this paper, we investigate uniqueness and uncertainty principles for solutions to covariant Schr\"odinger equations with variable coefficients and electromagnetic potentials. More precisely, we consider the Cauchy problem
\begin{align}\label{eq1}
    \begin{cases}
        i\partial_tu+H_{A,G}u=0,&(t,x)\in\R\times\R^d,\\
        u(x,0)=u_0(x),
    \end{cases}
\end{align}
where $u:I\times\R^d\to\mathbb C$ is the solution and $H_{A,G}$ is the variable-coefficient Schr\"odinger operator with electromagnetic potentials $A=A(x):\R^{d}\to\R^d$ and $V=V(t,x):\R^{d+1}\to\mathbb C$. More precisely, $H_{A,G}$ is given by
\begin{align*}
    H_{A,G}u=\sum_{j,k=1}^d D_j(g_{jk}D_ku)+V(t,x)u,
\end{align*}
where $D_j:=\partial_j-iA_j$ denotes the covariant derivative. We write $G:=(g_{jk})_{j,k=1}^d$ for the variable coefficient matrix. Throughout the paper we assume that $G\in C_b^3(\R^d;\R^d)$ is real-valued, symmetric, and uniformly elliptic, namely
\begin{align*}
    \lambda|\xi|^{2}\leq\sum_{j,k}^{n}g_{jk}\xi_{j}\xi_{k}\leq \Lambda|\xi|^{2}, \qquad \lambda,\Lambda>0.
\end{align*}
When $G=Id$, the operator $H_{A,G}$ reduces to the classical electromagnetic Schr\"odinger operator $\Delta_A=(\nabla-iA)^2+V(t,x)$.

Our goal is to understand to what extent Hardy-type uncertainty principles and unique continuation properties, which are well understood in the constant-coefficient setting, remain valid in the presence of both variable geometry and magnetic effects. This problem combines two distinct sources of difficulty: the loss of translation invariance induced by the metric $G$, and the nontrivial commutation structure generated by the magnetic potential.

The starting point of our analysis is the classical uncertainty principle due to Hardy \cite{Hardy}, which states that a function and its Fourier transform cannot both decay too rapidly unless the function is trivial.

\begin{theorem}[Hardy's uncertainty principle]

If $f(x)=\mathcal{O}(e^{-\alpha|x|^2})$ and its Fourier transform has Gaussian decay $\widehat{f}(\xi)=\mathcal{O}(e^{-\beta|\xi|^2})$ with $\alpha\beta>\frac{1}{16}$, then $f=0$.
\end{theorem}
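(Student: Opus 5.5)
Throughout we use the normalization of $\widehat f$ for which the threshold in the statement is sharp, i.e.\ Gaussians give the equality case. All constants below are written for that normalization; for another convention only the numbers change. The plan is the classical complex-analytic proof: reduce to $d=1$, then apply Phragmén--Lindelöf to the entire function $\widehat f$.

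\textbf{Reduction to one dimension.} Fix $\eta\in\R^{d-1}$ and set
\[
F_\eta(x_1)=\int_{\R^{d-1}} f(x_1,x')e^{-ix'\cdot\eta}\,dx'.
\]
The Gaussian decay of $f$ gives $|F_\eta(x_1)|\le Ce^{-\alpha x_1^2}$. The one-dimensional Fourier transform of $F_\eta$ is $\widehat f(\xi_1,\eta)$, so it is $\mathcal O(e^{-\beta\xi_1^2})$. Once the case $d=1$ is proved, $F_\eta\equiv 0$ for every $\eta$. This means the partial Fourier transform of $f$ in $x'$ vanishes identically, hence $f=0$.

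\textbf{One-dimensional case.}
\begin{itemize}
\item Since $|f(x)|\le Ce^{-\alpha x^2}$, the function $\widehat f$ extends to an entire function. Completing the square in the Fourier integral gives
\[
|\widehat f(\xi+i\tau)|\le C e^{c_\alpha \tau^2},
\]
where $c_\alpha$ is the sharp constant, proportional to $1/\alpha$. On the real axis we also have $|\widehat f(\xi)|\le Ce^{-\beta\xi^2}$.
\item After rescaling we may assume $\beta=c_\alpha+\delta$ with some $\delta>0$. This is exactly where the hypothesis $\alpha\beta>\frac1{16}$, read in the paper's normalization, is used.
\item Consider $h(z)=\widehat f(z)e^{c z^2}$ with $c$ chosen between $c_\alpha$ and $\beta$. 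On $\R$ it is bounded and in fact decays like $e^{-(\beta-c)\xi^2}$. On $i\R$ it is bounded because $|e^{c(i\tau)^2}|=e^{-c\tau^2}$ dominates $e^{c_\alpha\tau^2}$.
\item In each quadrant $h$ has order at most $2$. Phragmén--Lindelöf in a quarter plane, which has opening angle $\pi/2$, only allows growth of order $<2$, so the critical case must be handled separately.
\end{itemize}

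\textbf{Main obstacle: the borderline order $2$.} In the first quadrant I would use the rotation trick. Let $z=re^{i\theta}$ and work in the sectors $0\le\theta\le\varepsilon$ and $\varepsilon\le\theta\le\pi/2$. Multiply $h$ by the auxiliary factor $\exp(i\epsilon' e^{-i\varepsilon'} z^2)$, whose modulus decays on the interior rays and is controlled on the boundary rays. Both sectors have opening strictly less than $\pi/2$, so order $2$ growth is now admissible for Phragmén--Lindelöf. This yields boundedness of $h$ on each sector, with a bound uniform as the auxiliary parameter tends to $0$.

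Repeating the argument in all four quadrants shows that $h$ is a bounded entire function. By Liouville's theorem $h$ is constant. Since $h(\xi)\to0$ as $\xi\to\pm\infty$ along $\R$, the constant is $0$. Therefore $\widehat f\equiv0$ and $f=0$.

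The delicate point is choosing the sector angles and the auxiliary exponent so that the bounds on the boundary rays close uniformly. This is where the strict inequality, i.e.\ the gap $\delta>0$, is consumed.
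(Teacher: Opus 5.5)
\textbf{Verdict: the overall route is right, but the decisive Phragm\'en--Lindel\"of step is only sketched, and as described it does not close.}

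The paper does not prove this statement; it quotes it from Hardy. The benchmark is therefore the classical complex-analytic argument, which is the route you take.

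\textbf{What is fine.} The following parts of your setup are correct:
\begin{itemize}
\item the reduction to $d=1$;
\item your reading of the normalization (kernel $e^{-ix\xi/2}$, so that $c_\alpha=1/(16\alpha)$ and the hypothesis is exactly $\beta>c_\alpha$);
\item the bound $|\widehat f(\xi+i\tau)|\le Ce^{c_\alpha\tau^2}$;
\item the choice $h(z)=\widehat f(z)e^{cz^2}$ with $c_\alpha<c<\beta$;
\item the Liouville conclusion.
\end{itemize}

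\textbf{The gap.} Your sectors $\{0\le\arg z\le\varepsilon\}$ and $\{\varepsilon\le\arg z\le\pi/2\}$ share the ray $\arg z=\varepsilon$, and you never say where a bound on that ray comes from. With what you recorded (``order at most $2$''), the best available bound there is $|h(re^{i\varepsilon})|\le Ce^{r^2(c\cos^2\varepsilon-(c-c_\alpha)\sin^2\varepsilon)}$, which grows for small $\varepsilon$.

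Your factor $\exp(i\epsilon' e^{-i\varepsilon'}z^2)$ does not help:
\begin{itemize}
\item Its modulus is $\exp(-\epsilon' r^2\sin(2\theta-\varepsilon'))\ge e^{-\epsilon' r^2}$, so it cannot absorb that growth unless $\epsilon'$ is of order $c$. Then nothing is uniform as $\epsilon'\to0$.
\item On $\arg z=0$ it even grows like $e^{\epsilon' r^2\sin\varepsilon'}$.
\end{itemize}
Order $2$ plus boundedness on the axes, which is all your sketch uses, is genuinely insufficient. For example, $e^{iz^2}$ is entire of order $2$ and unimodular on both axes, yet unbounded in the second quadrant. So some directional information is indispensable.

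\textbf{The fix.} The missing ingredient is the anisotropic bound you already have for free:
\[
|h(\xi+i\tau)|\le Ce^{c_\alpha\tau^2}e^{c(\xi^2-\tau^2)}=Ce^{c\xi^2-(c-c_\alpha)\tau^2}.
\]
The splitting ray must be taken near the imaginary axis, not near the real axis.
\begin{enumerate}
\item Since $c>c_\alpha$, the right-hand side is $\le C$ on the whole sector $\{\theta_0\le\arg z\le\pi/2\}$, where $\tan^2\theta_0=c/(c-c_\alpha)$.
\item The complementary sector $\{0\le\arg z\le\theta_0\}$ has opening $\theta_0<\pi/2$. On it, $h$ is bounded by $C$ on both edges and $|h|\le Ce^{c|z|^2}$ with $2<\pi/\theta_0$. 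Phragm\'en--Lindel\"of then gives $|h|\le C$ there.
\item The bound depends only on $\xi^2$ and $\tau^2$, so the same holds in every quadrant.
\end{enumerate}

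In the strict case you need no auxiliary factor and no limiting argument. The gap $c-c_\alpha>0$ is consumed precisely in making $h$ bounded near the imaginary axis. Your auxiliary-factor idea belongs to the endpoint $\alpha\beta=1/16$. There you have only $|h|\le Ce^{c\xi^2}$; you multiply by $e^{i\gamma z^2}$ to get boundedness on $\{\tan\arg z\ge c/(2\gamma)\}$, apply Phragm\'en--Lindel\"of on the remaining sector, and let $\gamma\to0$.
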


For the free Schr\"odinger evolution, one has the representation
\begin{align*}
   u(t)\triangleq e^{it\Delta }f=\frac{1}{(2\pi t)^\frac d2}\int_{\R^d}e^{i\frac{|x-y|^2}{4t}}f(y)\,\dd y=(2\pi t)^{-\frac d2}e^{i\frac{|x|^2}{4t}}\mathcal{F}\big(e^{i\frac{|\cdot|^2}{4t}}f\big)(\frac{x}{2t}).
\end{align*}
Thus, by applying Hardy's theorem, one sees that if $\|e^{\alpha|x|^2}u(0)\|_{L^2}<\infty$, $\|e^{\beta|x|^2}u(T)\|_{L^2}<\infty$, $\alpha\beta>\frac{1}{16T}$, then $u:=e^{it\Delta}f\equiv0$. This observation gives a dynamical interpretation of Hardy's principle for dispersive equations.

The first nonlinear dispersive version of the uncertainty principle was proved by Bourgain \cite{Bourgain-IMRN}, who showed that if a solution to the Schr\"odinger equation has compact support in a short time interval $t\in[0,T]$, then $u\equiv0$. This result was later strengthened by Kenig--Ponce--Vega \cite{KPV-CPAM}, who proved uniqueness $u_1\equiv u_2$ if there exists a convex cone $\Gamma$ such that $u_1(x,0)=u_2(x,0)$ and $u_1(x,1)=u_2(x,1)$ for all $x\notin\Gamma+y_0$ and some fixed $y_0\in\R^d$.

Later, in a series of works by Escauriaza--Kenig--Ponce--Vega \cite{EKPV-CPDE,EKPV-JFA,EKPV-2008MRL,EKPV-JEMS,EKPV-Duke}, the following unique continuation result was established for the linear Schr\"odinger equation with electric potential.

\begin{theorem}[Unique continuation property]\label{Th-EKPV}
Let $d\geq3$ and $T>0$ bounded. Assume that $u\in L^\infty([0,T],L^2(\R^d))\cap L^2([0,T],H^1(\R^d))$ is a solution to 
\begin{align*}
    i\partial_tu+\Delta u-V(t,x)u=0.
\end{align*}
Under suitable boundedness assumptions on $V$, if
$$
\big\|e^{\alpha|x|^2}u(x,0)\big\|_{L^2(\R^d)}+\big\|e^{\beta|x|^2}u(x,T)\big\|_{L^2(\R^d)}<\infty,
$$
then $u\equiv0$ provided $\alpha\beta>\frac{1}{16T}$.
\end{theorem}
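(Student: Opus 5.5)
The plan follows the Escauriaza--Kenig--Ponce--Vega scheme in three steps.

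\textbf{Step 1: reduction by the Appell transform.} First I reduce to the symmetric case $\alpha=\beta$. The Appell (pseudo-conformal) transform maps a solution $u$ on $[0,T]$ to a solution $\tilde u$ on $[0,1]$ of an equation of the same type with a modified potential $\tilde V$. It has the form
\begin{align*}
\tilde u(x,t)=\Big(\tfrac{\sqrt{\alpha\beta}}{\alpha(1-t)+\beta t}\Big)^{d/2}u\Big(\tfrac{\sqrt{\alpha\beta}\,x}{\alpha(1-t)+\beta t},\tfrac{\beta t}{\alpha(1-t)+\beta t}\Big)e^{\frac{(\alpha-\beta)|x|^2}{4i(\alpha(1-t)+\beta t)}} .
\end{align*}
After this change of variables the Gaussian weights at the two endpoints have equal size $\mu=\sqrt{\alpha\beta}$. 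The boundedness assumptions on $V$ are preserved, up to harmless time-dependent factors.

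\textbf{Step 2: interior Gaussian decay by log-convexity.} Next I prove that $\sup_{t\in[0,1]}\|e^{\mu|x|^2}\tilde u(t)\|_{L^2}$ is finite. To do this I set $f=e^{\varphi}\tilde u$ with a weight $\varphi$, and split the conjugated operator into a symmetric part $\mathcal S$ and an antisymmetric part $\mathcal A$. The quantity $H(t)=\|f(t)\|^2$ then satisfies the log-convexity inequality
\begin{align*}
\partial_t^2\log H\geq \frac{2\langle(\partial_t\mathcal S+[\mathcal S,\mathcal A])f,f\rangle}{H}-\text{error}.
\end{align*}
With the weight $\varphi=\mu|x|^2$, suitably truncated and mollified, the commutator $[\mathcal S,\mathcal A]$ is nonnegative. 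An abstract lemma then gives $H(t)\lesssim H(0)^{1-t}H(1)^{t}$, with constants depending on $\|V\|_\infty$. The lower-order potential terms are handled first by an energy estimate, which preserves the weighted $L^2$ norm up to a factor $e^{C\|V\|}$. The truncation is then removed by monotone convergence.

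\textbf{Step 3: Carleman estimate and contradiction.} I use a Carleman inequality for $i\partial_t+\Delta$ with a weight of the form $e^{|x/R+\varphi(t)e_1|^2}$ on functions supported in $\{|x|\ge R\}\times[0,1]$, with a lower bound of size $R^2/\mathcal O(1)$. Applying it to $\eta(x)\theta(t)\tilde u$, where $\eta$ and $\theta$ are cutoffs, and using Step 2 to absorb the cutoff error terms and the $V$ term, I obtain a lower bound on $\tilde u$ on a space-time annulus of order $e^{-CR^2}$. Log-convexity with the sharper weight $\mu+\epsilon$, available when $\alpha\beta>1/(16T)$ after rescaling, yields an upper bound that decays faster. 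Letting $R\to\infty$ gives the contradiction, so $u\equiv0$.

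\textbf{Main obstacle.} I expect the sharp constant $1/(16T)$ to be the hardest part. It requires an improved interior estimate $\|e^{a(t)|x|^2}\tilde u(t)\|\lesssim 1$ with $a(t)$ solving the Riccati ODE $\dot a\sim 32a^2$, compared against the Carleman lower bound. I would first try to run the log-convexity argument with a time-dependent weight $a(t)|x|^2$ and verify that $\partial_t\mathcal S+[\mathcal S,\mathcal A]\geq0$ exactly when this ODE holds. If that only works up to a loss, I would settle for the non-sharp threshold and then bootstrap using the Appell transform.
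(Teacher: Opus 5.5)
Your Step~3 cannot produce the stated threshold, or any explicit one. A Carleman inequality with weight $e^{|x/R+\varphi(t)\mathbf{e}_1|^2}$ on functions supported away from a ball only gives an annulus lower bound $\delta(R)\ge c\,e^{-CR^2}$. The constant $C$ there is unrelated to $\frac1{16}$; this is the mechanism behind Theorem~\ref{mass lower}. Compared with the upper bound $e^{-2\mu(R-1)^2}$, it yields $u\equiv0$ only when $2\mu>C$.

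The ``log-convexity with the sharper weight $\mu+\epsilon$'' you invoke is not available. Log-convexity only interpolates the endpoint weights, and after the Appell transform you only know $e^{\mu|x|^2}\tilde u(0),e^{\mu|x|^2}\tilde u(1)\in L^2$. The hypothesis on $\alpha\beta$ gives no extra decay by itself.

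The paper does not prove this theorem; it quotes it from EKPV and adapts their argument in the proof of Theorem~\ref{schrodinger}. There the threshold enters through a Carleman estimate with the explicit weight $\mu|x+Rt(1-t)\mathbf{e}_1|^2-\frac{(1+\varepsilon)R^2t(1-t)}{16\mu}$, applied to $\theta_M(x)\zeta_R(t)u$:
\begin{itemize}
\item if $(1+\varepsilon)\mu\le\gamma$, this weight is at most $\gamma|x|^2+\gamma/\varepsilon$ where $\partial_t\zeta_R\neq0$, so the cutoff errors are controlled by the interior Gaussian bound;
\item near $(x,t)=(0,\frac12)$ the weight is of size $\frac{R^2}{16\mu}\bigl(\mu^2-\frac{1+\varepsilon}{4}\bigr)$.
\end{itemize}
This gives $u\equiv0$ for $\gamma>\frac12$, i.e.\ $\alpha\beta>\frac14$ when $T=1$, which is four times the constant in the statement.

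The sharp $\frac1{16}$ is the content of \cite{EKPV-Duke}. It needs an extra improvement of the interior decay, via log-convexity with a time-dependent weight $a(t)|x|^2$ modelled on the free Gaussians. An example is $a(t)=\frac{R}{2(1+R^2(2t-1)^2)}$, which satisfies $\ddot a-\frac{3\dot a^2}{2a}+32a^3=0$. A first-order law like $\dot a\sim32a^2$ cannot even have $a(0)=a(1)$. Your ``main obstacle'' paragraph names this step but does not supply it. Note also that rescaling $[0,T]$ to $[0,1]$ turns the hypothesis into $\alpha\beta T^2>\frac1{16}$.

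Step~2 also fails as written, because the Schr\"odinger flow does not propagate Gaussian weights. Differentiating $\|e^{\varphi}u\|^2$ produces $2\,\mathrm{Im}\int\nabla\varphi\cdot\nabla v\,\bar v$. This can only be absorbed by a dissipative term $-a\int|\nabla v|^2$ with $a>0$. That is why Lemma~\ref{sub-Gaussian} needs $a>0$, and its weight $\frac{\gamma a}{a+4\gamma(a^2+b^2)t}$ degenerates as $a\to0$.

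Truncating and then using monotone convergence does not rescue this.
\begin{itemize}
\item Bounded truncations of $\mu|x|^2$ are not convex. So $4\int D^2\varphi\,\nabla f\cdot\nabla\bar f$ becomes negative where the truncation bends, and $[\mathcal S,\mathcal A]$ is no longer bounded below by $-M_0$.
\item Convex truncations grow at least linearly. They require a priori exponential decay at interior times, which the Schr\"odinger flow does not provide.
\end{itemize}

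The missing ingredient is the parabolic regularization of Section~2.3:
\begin{itemize}
\item solve $\partial_tu_\varepsilon=(\varepsilon+i)(\Delta+V_1)u_\varepsilon+iV_2u_\varepsilon$;
\item control the endpoint weights through the heat semigroup $e^{\varepsilon(\Delta+V_1)}$ (this is where a structural hypothesis is used, such as $V=V_1(x)+V_2(x,t)$ with $V_1$ real and $V_2$ Gaussian-decaying);
\item prove log-convexity uniformly in $\varepsilon$, and let $\varepsilon\to0$.
\end{itemize}

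Finally, the Appell formula you wrote is EKPV's, whose parameters are widths: $e^{|x|^2/\beta^2}$ at $t=0$ and $e^{|x|^2/\alpha^2}$ at $t=1$. With your weight coefficients inserted, it turns the endpoint weights into $\beta$ and $\alpha$ rather than $\sqrt{\alpha\beta}$. So $(\alpha,\beta)$ must be replaced by $(\beta^{-1/2},\alpha^{-1/2})$.
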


\begin{remark}
In \cite{EKPV-Duke}, the authors obtained an $L^2$ exponential weighted estimate, but did not derive $u\equiv0$ in the endpoint case $\alpha\beta=\frac{1}{16T}$. However, in \cite{EKPV-CMP16}, they showed that the heat equation still enjoys unique continuation at the endpoint $\alpha\beta=\frac{1}{16T}$.
\end{remark}

Similar results to Theorem \ref{Th-EKPV} for Schr\"odinger equations with gradient perturbations were studied in Dong--Staubach \cite{Dong-Staubach} and Escauriaza--Fanelli--Vega \cite{Indiana}. For the variable-coefficient case $H=\operatorname{div}(G\nabla\cdot)$, Federico--Li--Yu \cite{Yu-CCM} proved that if the solution exhibits cubic exponential decay at times $t=0$ and $t=1$ with large parameter $\sigma$,
$$
\big\|e^{\sigma|x|^3}u(x,0)\big\|_{L^2(\R^d)}+\big\|e^{\sigma|x|^3}u(x,1)\big\|_{L^2(\R^d)}<\infty,
$$
then $u\equiv0$ in $t\in[0,1]$.

For Schr\"odinger equations with electromagnetic potentials, Fanelli--Vega \cite{Fanelli-Vega} established magnetic virial identities and proved Strichartz estimates under suitable decay assumptions on the magnetic field $B$. This was later extended to the magnetic Dirac equation in \cite{BDF-JMPA}. For the virial identity corresponding to variable-coefficient covariant Schr\"odinger flows with nontrapping matrix, we refer to \cite{CD-MathAnn}. Based on magnetic virial identities, Barc\'elo--Fanelli--Guiterrez--Ruiz--Vilela \cite{BFGRV-JFA} and Cassano--Fanelli \cite{Cassano-Fanelli} proved unique continuation results under the condition $\alpha\beta>\frac{1}{16T}$ together with suitable smallness assumptions on the magnetic field. Related unique continuation results for other dispersive equations can be found in \cite{BCF-NA,CFL-CPDE,CEKPV-Indiana,KPPV-CPDE,KPV-MRL,KPV-14,KPV-JFA}. For a broader overview of uncertainty principles, we refer to the survey by Bertolin--Mallinikova \cite{Mallinicova}.

The literature on uncertainty principles for Schr\"odinger equations has thus developed in several directions, including the presence of electric potentials, magnetic perturbations, and variable coefficients. However, results that combine variable metrics and magnetic effects are still rather limited. In particular, the methods developed for constant-coefficient magnetic equations and those for variable-coefficient non-magnetic equations cannot be directly combined, due to the interaction between the geometry and the magnetic structure.

The main contribution of this paper is to provide a unified treatment of these two settings. More precisely, we show that Hardy-type uncertainty principles and unique continuation properties continue to hold under suitable smallness assumptions on both the metric and the magnetic field. Our results recover the constant-coefficient covariant case when $G=I$, and the variable-coefficient non-magnetic case when $A=0$, while allowing the interaction of both effects in a perturbative regime.

\subsection{The study of variable-coefficient covariant Schr\"odinger flow}

When $A=0$, equation \eqref{eq1} reduces to the variable-coefficient Schr\"odinger flow. The influence of geometry on the behavior of solutions to linear and nonlinear partial differential equations has been extensively studied. Sharp Strichartz estimates for this model (or, more generally, on manifolds) have been investigated by many authors under various assumptions on the geometry or on the matrix $G$. We refer to \cite{Staffilani-Tataru,Bouclet-Tzvetkov,Burq-Gerard-Tzvetkov,Hassell} for more details. Under nontrapping assumptions and asymptotically flat geometry, the Strichartz estimate is scaling invariant. The local smoothing estimate was established in Doi \cite{Doi} and Bouclet--Burq \cite{BB-Duke}. As an application, local well-posedness is by now standard via Strichartz estimates and Picard iteration. For variable-coefficient covariant Schr\"odinger flows, Strichartz estimates were investigated by Mizutani \cite{Mizutani-1,Mizutani-2}. We also refer to \cite{Cacciafesta-JST,Cacciafesta-SIAM} for uniform resolvent estimates related to this model.
	
\subsection{Main results}

Motivated by these developments, we investigate unique continuation properties for solutions to \eqref{eq1}. Our first result establishes unique continuation under suitable smallness assumptions on the metric and electromagnetic coefficients. Before stating the theorem, we introduce the assumptions on $A$, $V$, and $G$.

\begin{assumption}\label{assm1}
Assume that the electromagnetic potentials $A$, $V$ and the matrix $G$ satisfy the following smallness condition:
\begin{align}
\sup_{x\in\mathbb R^d}\langle x\rangle^{3}(|\nabla G|+|\nabla^{2}G|+|\nabla^{3}G|)& \leq\varepsilon_{0}\ll1,\label{wyy1}\\
 \|(Gx)^{\top}B\|_{L^{\infty}}^{2}:=\frac{M_{A}}{4} <\infty\label{wyy2}. 
\end{align}
\end{assumption}

We can now state our first theorem.

\begin{theorem}\label{thm1}
Let $u\in C^{0}([0,1], L^{2}(\mathbb{R}^{d}))$ be a solution to \eqref{eq1}
with $V(t,x)\in L^\infty$.
Under Hypothesis \ref{assm1} and for any fixed $\vartheta>0$ and $\sigma>0$, if
\begin{equation}
e^{\sigma|x|^{2+\vartheta}}u(0,x),\,e^{\sigma|x|^{2+\vartheta}}u(1,x)\in L_{x}^{2}(\mathbb R^d),
\end{equation}
then $u\equiv0$.
\end{theorem}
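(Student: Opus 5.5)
We follow the Escauriaza--Kenig--Ponce--Vega scheme in three stages: persistence of the weighted decay at intermediate times, a lower bound on the solution in annuli, and a Carleman upper bound that contradicts the lower bound. Throughout, the magnetic field $B$ enters only through the quantity controlled in \eqref{wyy2}.

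\textbf{Step 1: persistence of decay.} We first show that $\sup_{t\in[0,1]}\|e^{\gamma|x|^{2+\vartheta}}u(t)\|_{L^2}<\infty$ for some $0<\gamma\le\sigma$. We also show the companion estimate $\|\sqrt{t(1-t)}\,e^{\gamma|x|^{2+\vartheta}}\sqrt{G}\,\nabla_A u\|_{L^2([0,1]\times\R^d)}<\infty$.

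The tool is a logarithmic convexity argument for $H(t)=\|e^{\phi}u(t)\|_{L^2}^2$, where $\phi$ is a truncated version of $\gamma|x|^{2+\vartheta}$. Write $f=e^{\phi}u$. It satisfies $\partial_t f=\mathcal S f+\mathcal A f$, with $\mathcal S$ symmetric and $\mathcal A$ antisymmetric. Both are built from $\nabla_A\cdot G\nabla_A$ and from the terms $\nabla_A\phi\, G\nabla\phi$.

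We then compute the commutator $\partial_t\mathcal S+[\mathcal S,\mathcal A]$. This produces:
\begin{itemize}
\item a positive Hessian term $\nabla_A\cdot(G D^2\phi\, G)\nabla_A$;
\item the quartic term $G\nabla\phi\cdot D^2\phi\, G\nabla\phi$;
\item error terms involving $\nabla G$ and its derivatives, which are controlled by \eqref{wyy1} after absorbing the weight $\langle x\rangle^3$;
\item a magnetic term $\Im\int (G\nabla\phi)^{\top}B\cdot \overline{f}\,G\nabla_A f$.
\end{itemize}
For radial $\phi$ the vector $G\nabla\phi$ is proportional to $\phi'(|x|)\,Gx/|x|$. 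Hence the magnetic term is bounded via \eqref{wyy2} and Cauchy--Schwarz by a small multiple of the gradient term plus $M_A\|\phi'|x|^{-1}f\|^2$. This remainder is absorbed by the quartic term, because the weight is super-quadratic. The bounded potential $V$ is handled by the standard EKPV lemma on perturbed log-convexity. Letting the truncation go to infinity gives the bound.

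\textbf{Step 2: lower bound.} Using conservation-type estimates together with the energy bound from Step 1, we derive a quantitative lower bound of Gaussian type. Concretely, for $R$ large,
$$\|u\|_{L^2(\{R-1\le|x|\le R\}\times[\tfrac14,\tfrac34])}\gtrsim e^{-cR^2}.$$
This follows from a Carleman estimate with weight $e^{\alpha|x/R+\varphi(t)e_1|^2}$, where $\varphi$ is supported away from $t=0,1$. The estimate is applied to $\theta(x)\eta(t)u$ with cutoffs, in the variable-coefficient covariant form. The Carleman estimate is proved by the same symmetric/antisymmetric splitting as in Step 1. Its positivity comes from the flat part $G\approx I$, and the $\nabla G$ errors are $O(\varepsilon_0 R^{-3}\cdot R^2)$, which is negligible.

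\textbf{Step 3: upper bound and contradiction.} Interpolating the super-quadratic decay from Step 1 in the annulus gives the upper bound $e^{-\gamma R^{2+\vartheta}}$. This contradicts the lower bound $e^{-cR^2}$ as $R\to\infty$ unless $\|u\|_{L^2(B_1\times[1/4,3/4])}=0$. Repeating the argument with translated centres then gives $u\equiv0$.

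\textbf{Main obstacle.} We expect Step 2 to be the hardest part: the Carleman estimate with variable $G$ and magnetic field. The commutator produces terms $\nabla_A\cdot(\partial G)\nabla_A$ and $B$-terms paired with $x/R$. To keep these below the main positive term $\alpha R^{-2}\|G\nabla_A g\|^2$, one needs precisely the $\langle x\rangle^{-3}$ decay in \eqref{wyy1}. The weight may also need to be modified to $\varphi(t)$ composed with $G$, i.e.\ a geodesic-adapted weight. Our first attempt would keep the Euclidean weight and absorb all $G$-errors through the smallness $\varepsilon_0$.
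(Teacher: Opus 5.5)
Your three-stage architecture (weighted persistence, Carleman lower bound on annuli, contradiction) is the paper's. However, Step 2 fails as you state it, at exactly the point the paper is designed around. With the EKPV weight $\phi=\alpha|x/R+\varphi(t)\mathbf e_1|^2$ you get $\partial_l\phi=\frac{2\alpha}{R^2}x_l+\frac{2\alpha}{R}\varphi(t)\delta_{1l}$ and $D^2\phi=\frac{2\alpha}{R^2}I$. So the only coercive gradient term in the commutator of Lemma \ref{technical-lemma} is $\frac{8\alpha}{R^2}\|G\nabla_Af\|^2$. The metric terms, e.g. $2\int g_{jk}\partial_kg_{lm}\partial_l\phi\,D_mf\,\overline{D_jf}$, then contain $\frac{4\alpha}{R}\varphi(t)\int g_{jk}\partial_kg_{1m}D_mf\,\overline{D_jf}$. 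This is bounded only by $C\frac{\alpha}{R}\|\varphi\|_\infty\sup_{\operatorname{supp}f}|\nabla G|\,\|\nabla_Af\|^2$.

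Your count $O(\varepsilon_0R^{-3}\cdot R^2)$ tacitly assumes $|x|\sim R$ on the support of $f$. But the cut-off solution $\theta\eta u$ must contain the fixed region near the origin where $u$ is known to be nontrivial for $t\in[\frac14,\frac34]$; that region is the whole source of the lower bound. There $|\nabla G|$ is only $O(\varepsilon_0)$, so the error is $\varepsilon_0R\to\infty$ times the good term and cannot be absorbed by smallness of $\varepsilon_0$. Likewise, the magnetic commutator term now contains $R\varphi(t)(G\mathbf e_1)^{\top}B$, which \eqref{wyy2} does not control. The paper uses a mixed weight only in Section 4, where the block structure of Hypothesis \ref{assm2} kills the $\partial g_{1m}$ terms and $\|(G\mathbf e_1)^{\top}B\|_{L^\infty}\le\varepsilon_1$ is assumed. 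Note also that a Gaussian lower bound $e^{-cR^2}$ under Hypothesis \ref{assm1} alone would already give uniqueness for $e^{\gamma|x|^2}$ decay with $\gamma$ large, making $\vartheta>0$ superfluous. That is a sign the step claims too much.

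The paper instead separates variables (Proposition \ref{Carleman estimate}, Theorem \ref{mass lower}), taking $\phi=\mu|x/R|^2+\mu^{2^{-\ell}}\varphi(t)$. Then $\partial_l\phi=2\mu x_l/R^2$, the metric terms are bounded by $\frac{12\mu}{R^2}c_d\Lambda\sup|x||\nabla G|\,\|G\nabla_Af\|^2$ and absorbed, and the magnetic term sees only $(Gx)^{\top}B$. The price is that $\mu^{2^{-\ell}}\varphi''(t)|f|^2$ has no sign. It can be absorbed by $\frac{\mu^3}{R^6}|x|^2|f|^2$ only for $f$ supported in $|x|\ge r_0$ and $\mu\gtrsim R^{6/(3-2^{-\ell})}$. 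Consequently:
\begin{itemize}
\item one assumes nontriviality on an annulus $B_{R_0}\setminus B_{2\varepsilon}$, which also replaces your translation step (not harmless here, since Hypothesis \ref{assm1} is not translation invariant);
\item the lower bound degrades to $e^{-C_0R^{6/(3-2^{-\ell})}}$;
\item $\vartheta>0$ is used to choose $\ell$ with $6/(3-2^{-\ell})<2+\vartheta$.
\end{itemize}

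A secondary issue is in Step 1. Making the commutator computation rigorous needs a priori finiteness of $H$, normally obtained from the Ginzburg--Landau regularization. That regularization does not preserve super-Gaussian decay; it only produces Gaussian tails. Meanwhile, any truncation that makes $H$ finite must flatten $\gamma|x|^{2+\vartheta}$, destroying the convexity and quartic term your lower bound uses. The paper avoids this by proving log-convexity for $\gamma|x|^2$ and then integrating in $\gamma$ against $e^{-(2\gamma)^q/(q\varrho^q)}$ (Corollary \ref{cor-log}).
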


Theorem \ref{thm1} shows that super-quadratic exponential decay at two distinct times enforces triviality of solutions under mild smallness assumptions on the coefficients. This can be viewed as a robust extension of known unique continuation results to a setting in which both the geometry and the magnetic field are nontrivial.

To obtain uniqueness under quadratic exponential decay, we impose an additional structural assumption on the matrix $G$. Under this assumption, we prove an analogue of the constant-coefficient results in \cite{BFGRV-JFA, EKPV-JEMS}.

\begin{assumption}\label{assm2}
We assume that the matrix $G$ is of the form
\begin{align*}
    G=I+\widetilde G_1,
\end{align*}
where
\begin{align*}
    \widetilde{G}_1:=
    \begin{pmatrix}
    0&\mathbf{0}^\top \\
\mathbf{0} & \tilde{G}(x')
    \end{pmatrix}.
\end{align*}
Here, $\mathbf{0}$ is the zero vector in $\R^{d-1}$, $x^\prime=(x_{2},\dots,x_{d})$, and
\begin{equation}
    \sup_{x^{\prime}\in\mathbb{R}^{d-1}}\langle x^\prime\rangle^{3}(|\nabla\tilde{G}(x^\prime)|+|\nabla^{2}\tilde{G}(x^{\prime})|+|\nabla^{3}\tilde{G}(x^{\prime})|)\leq\varepsilon_{0}.
\end{equation}
In addition, we require that $\tilde{G}(x^\prime)$ is uniformly elliptic for $x^\prime\in\R^{d-1}$.
\end{assumption}
\begin{assumption}\label{assum3}
    Assume that the potential $V:\Bbb R^{d+1}\to\Bbb C$ satisfy  the following conditions
    \begin{align}
\|V_{1}(x)\|_{L^{\infty}}:=M_{1}<\infty,
\end{align}
and
\begin{align}\label{syl}
\sup_{0\leq t\leq1}\|e^{\gamma|x|^2}V_{2}(\cdot,t)\|_{L_{x}^{\infty}}e^{\sup_{0\leq t\leq 1}\|\operatorname{Im}V_{2}(\cdot,t)\|_{L_{x}^{\infty}}}:=M_{2}<\infty.
\end{align}
\end{assumption}
We now state the Hardy-type uncertainty principle for the Schr\"odinger equation and the corresponding uniqueness result for the heat equation with variable-coefficient electromagnetic potentials.

\begin{theorem}\label{schrodinger}
Under the Hypothesis \ref{assm2} on $G$, suppose that $u\in L^{\infty}([0,T],L^{2}(\mathbb{R}^{d}))\cap L^{2}([0,T],H^{1}(\mathbb{R}^{d}))$ satisfies
\begin{equation}
\begin{cases}
i\partial_{t}u=\operatorname{div}_{A}(G\nabla_{A}u)+V(x,t)u,&\textit{in}\,\,\mathbb{R}^{d}\times[0,1],\\
u(0)=u_{0}.
\end{cases}
\end{equation}
We denote $B_{jk}=\partial_{k}A_{j}-\partial_{j}A_{k}$. 
We assume that there exist a unit vector such that $\textbf{e}_{1}\in\mathbb{S}^{d-1}$ and a small constant $\varepsilon_{1}$ such that
\begin{equation}
\|(G\textbf{e}_{1})^{\top}B\|_{L^{\infty}_{x}}\leq\varepsilon_1, 
\end{equation}
and the potential $V(t,x)$ satisfies Hypothesis \ref{assum3}. If $e^{\gamma|x|^{2}}u_{0}\in L^{2}(\mathbb{R}^{d})$ and $e^{\gamma|x|^{2}}u(1,x)\in L^{2}(\mathbb{R}^{d})$ for $\gamma>\frac12+\frac{N(\varepsilon_{0}+\varepsilon_{1})}{16}$, then $u\equiv0$. Here, $N=\max\{384c_{d},544\}$ and $c_{d}$ is constant depending on $d$.
\end{theorem}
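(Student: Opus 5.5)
I will follow the EKPV scheme in its magnetic form due to Barc\'elo--Fanelli--Guti\'errez--Ruiz--Vilela, working throughout in the distinguished direction $\textbf{e}_1$. Under Hypothesis \ref{assm2} the metric satisfies $G\textbf{e}_1=\textbf{e}_1$ and it does not depend on $x_1$, so the weight should depend only on $x_1$. The plan has four steps.

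\textbf{Step 1: gauge reduction.} First I reduce the magnetic potential by a Cr\"onstrom-type gauge along $\textbf{e}_1$: with $\varphi(x)=\int_0^{x_1}A_1(s,x')\,ds$, replacing $u$ by $e^{-i\varphi}u$ gives $A_1\equiv0$. After this,
\[
\partial_1 A_k = B_{k1}=\big((G\textbf{e}_1)^\top B\big)_k ,
\]
so $|\partial_1 A|\le\varepsilon_1$. This is the only information on $B$ that enters the commutator computations.

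\textbf{Step 2: persistence of Gaussian decay.} Next I prove that the decay propagates to all intermediate times, namely
\[
\sup_{t\in[0,1]}\|e^{\gamma|x|^2}u(t)\|_{L^2}\lesssim \|e^{\gamma|x|^2}u_0\|_{L^2}+\|e^{\gamma|x|^2}u(1)\|_{L^2}.
\]
I would do this with the abstract log-convexity lemma of EKPV applied to $f=e^{\phi}u$, where $\phi$ is a regularization of $\gamma x_1^2$ and then of $\gamma|x|^2$. The conjugated operator splits into a symmetric part $\mathcal S$ and an antisymmetric part $\mathcal A$. Because $G\textbf{e}_1=\textbf{e}_1$ and $G$ is independent of $x_1$, the commutator $[\mathcal S,\mathcal A]$ is exactly the constant-coefficient one in the $x_1$ direction, plus errors that involve $\nabla\tilde G$ or $\partial_1 A$. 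The potential $V=V_1+V_2$ is handled as in EKPV: the bounded part $V_1$ is absorbed, and the Gaussian-decaying part $V_2$ enters through the constant $M_2$.

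\textbf{Step 3: Carleman inequality.} The core step is a Carleman inequality for the variable-coefficient covariant operator, with the EKPV weight
\[
e^{\mu|x_1+Rt(1-t)\textbf{e}_1 \cdot|^2}=e^{\mu|x+Rt(1-t)\textbf{e}_1|^2}
\]
in its improved form, that is, with $\mu$ tied to $\gamma$ and the lower-order terms tracked. I would compute $\partial_t\mathcal S+[\mathcal S,\mathcal A]$. The main term is positive of size $\mu\|\nabla_A f\|^2+\mu^3\|x f\|^2$ in the $x_1$ variable. The errors come from three sources:
\begin{itemize}
\item $\nabla\tilde G$, $\nabla^2\tilde G$ and $\nabla^3\tilde G$, which are controlled by $\varepsilon_0\langle x'\rangle^{-3}$;
\item the magnetic term $2\operatorname{Im}\int \nabla_1\phi\,\big((G\textbf{e}_1)^\top B\big)\cdot\overline{\nabla_A f}\,f$, which is bounded by $\varepsilon_1\mu\|xf\|\,\|\nabla_A f\|$;
\item cross terms between the metric and the magnetic field.
\end{itemize}
Each error is bounded by a multiple of $(\varepsilon_0+\varepsilon_1)$ times the main positive terms. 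This is where the loss $N(\varepsilon_0+\varepsilon_1)/16$ appears. Tracking the numerical constants, the factors $384c_d$ and $544$, is the main obstacle. The plan is to expand the commutator term by term, apply Cauchy--Schwarz with weights chosen so that each error consumes an explicit fraction of the positivity, and then sum these fractions to obtain $N$.

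\textbf{Step 4: conclusion.} Finally I combine the estimates. Applying the Carleman inequality to $\theta(x)\eta(t)u$, where $\theta$ and $\eta$ are cutoffs, and using Step 2 to control the cutoff errors, I obtain
\[
\|e^{\mu|x+Rt(1-t)\textbf{e}_1|^2}u\|_{L^2(|x|<R/8,\ t\in[3/8,5/8])}\ \text{bounded uniformly in } R,
\]
while the left side grows like $e^{cR^2}\|u\|_{L^2(B_1\times[3/8,5/8])}$. The growth dominates precisely when $\gamma>\frac12+\frac{N(\varepsilon_0+\varepsilon_1)}{16}$. Letting $R\to\infty$ forces $u\equiv0$ on a space-time open set, and backward uniqueness of the flow then gives $u\equiv0$ everywhere.
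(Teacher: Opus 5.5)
Your Step 3 has a genuine gap, and it is the step that produces the threshold. For the EKPV weight $\varphi=\mu|x+Rt(1-t)\mathbf e_1|^2-\frac{(1+\varepsilon)R^2t(1-t)}{16\mu}$, the positive part of $\partial_t\mathcal S+[\mathcal S,\mathcal A]$ is not $\mu\|\nabla_Af\|^2+\mu^3\|xf\|^2$.

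The time dependence of the weight produces $8\mu R\operatorname{Im}\int(1-2t)\,G\mathbf e_1\cdot\nabla_Af\,\bar f$. This uses up the entire $\mathbf e_1$-part of the Hessian term. After completing squares, what survives is:
\begin{itemize}
\item the shifted gradient term $8\mu\|G\nabla_Af+i\frac{R(1-2t)}{2}\mathbf e_1f\|^2$;
\item the shifted position term $32\mu^3\|(x+Rt(1-t)\mathbf e_1-\frac{R}{16\mu^2}\mathbf e_1)f\|^2$;
\item the slack $\frac{\varepsilon R^2}{8\mu}\|f\|^2$.
\end{itemize}

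Moreover, the magnetic coefficient is $G\nabla\varphi=2\mu(Gx+Rt(1-t)\mathbf e_1)$, not $2\mu x$. Your bound $\varepsilon_1\mu\|xf\|\,\|\nabla_Af\|$ therefore misses the dangerous piece with coefficient $8\mu R\,t(1-t)(G\mathbf e_1)^\top B$, which is of size $\mu R\varepsilon_1\|f\|\,\|\nabla_Af\|$. After Cauchy--Schwarz against the gradient term, it leaves a residual of order $\mu R^2\|f\|^2$ times a power of $\varepsilon_1$. None of the $\mu\|\cdot\|^2$ or $\mu^3\|\cdot\|^2$ terms above can absorb this. The slack absorbs it only if $\varepsilon$ itself is of size $\mu^2$ times that power of $\varepsilon_1$.

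This is exactly what the paper does. It replaces $1+\varepsilon$ in the weight by $1+\varepsilon+N(\varepsilon_0+\varepsilon_1)\mu^2$, so that $\partial_{tt}\varphi$ supplies $\frac{N(\varepsilon_0+\varepsilon_1)\mu R^2}{8}\|f\|^2$. The paper also routes the $\nabla\tilde G$ terms through the shifted square, giving residuals $48c_d\varepsilon_0\mu R^2\|f\|^2$ and $68\varepsilon_1\mu R^2\|f\|^2$; hence $N=8\max\{48c_d,68\}$.

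The price appears in your Step 4. On $B_{\mathfrak f(\varepsilon)R/8}\times[\frac{1-\varepsilon}{2},\frac{1+\varepsilon}{2}]$ the weight is only bounded below by $\frac{R^2}{16\mu}\{\mu^2[1-\frac{N(\varepsilon_0+\varepsilon_1)}{4}-o(1)]-\frac{1+\varepsilon}{4}\}$. This forces $\mu>\frac12(1-\frac{N(\varepsilon_0+\varepsilon_1)}{4})^{-1/2}$, and together with $\mu\le\gamma/(1+\varepsilon)$ it gives the condition on $\gamma$. Your mechanism, where each error consumes a fraction of the existing positivity, cannot shift any threshold: if that absorption worked, the weight would be unchanged and you would get $\gamma>\frac12$.

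The gauge and one-direction idea does not avoid this. If the weight depends only on $x_1+Rt(1-t)$, the $\nabla G$ terms and $\mathcal Q^2\varphi$ do vanish, since $g_{1m}=\delta_{1m}$ and $\partial_1G=0$. But the Hessian term then gives only $8\mu\|D_1f\|^2$. Meanwhile the magnetic term $-8\mu\operatorname{Im}\int(x_1+Rt(1-t))\,g_{jk}B_{1k}\,f\,\overline{D_jf}$ survives only for $j,k\ge2$, i.e.\ it involves transversal derivatives with no positivity at all. The same obstruction breaks log-convexity with $\gamma x_1^2$.

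Conversely, with $\gamma|x|^2$ in Step 2 or the full Carleman weight, the magnetic term contains $(Gx)^\top B$. So your claim that $(G\mathbf e_1)^\top B$ is the only magnetic information entering is false. You need $\|(Gx)^\top B\|_{L^\infty}<\infty$, which the paper uses via \eqref{wyy2}; the bound $|\partial_1A|\le\varepsilon_1$ gives no control on it.

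Smaller points:
\begin{itemize}
\item Log-convexity must be justified at the $H^1$ level; the paper does this through the regularized flow $(i+\varepsilon)H$.
\item The window $[\frac38,\frac58]$ is too wide: for $\mu$ near the threshold the weight is positive only for $t$ near $\frac12$ and $|x|\ll R$.
\item Vanishing on a space-time open set does not give $u\equiv0$ by backward uniqueness. You need $u\equiv0$ on $\mathbb R^d\times[\frac{1-\varepsilon}{2},\frac{1+\varepsilon}{2}]$, which follows because the observation ball has radius $\mathfrak f(\varepsilon)R/8\to\infty$. Then use $\|u(0)\|_{L^2}\sim\|u(t)\|_{L^2}$, which is where Hypothesis \ref{assum3} enters.
\end{itemize}
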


Theorem \ref{schrodinger} establishes a Hardy-type uncertainty principle at the quadratic exponential scale under an additional structural assumption on the metric. This extra condition allows us to overcome certain obstructions in the commutator estimates and to recover a sharp result analogous to the constant-coefficient case.

\begin{remark}
From \eqref{formula of mu} below, we obtain a quantitative Hardy uncertainty principle compared to Federico--Li--Yu, where only the case of large parameter $\gamma$ was proved when $A=0$.
In particular, when $G=I$, the constants $\varepsilon_{0}$ and $\varepsilon_1$ reduce to $0$, and our result coincides with that of \cite{BFGRV-JFA}. Moreover, when $A=0$, we recover the classical result of \cite{EKPV-JEMS}.
\end{remark}

\begin{theorem}\label{parabolic} Assume Hypothesis \ref{assm2} and \ref{assum3} holds. 
Suppose that $u\in L^{\infty}([0,1],L^{2}(\mathbb{R}^{d}))\cap L^{2}([0,1],H^{1}(\mathbb{R}^{d}))$ satisfies
\begin{equation}
\begin{cases}
\partial_{t}u=\operatorname{div}_{A}(G\nabla_{A}u)+V(x,t)u,&(x,t)\in\mathbb{R}^{d}\times[0,1],\\
u(0)=u_{0}.
\end{cases}
\end{equation}

Suppose that  there exists a unit vector $\textbf{e}_{1}\in\mathbb{S}^{d-1}$ such that
\begin{equation}
\|(G\textbf{e}_{1})^{\top}B\|_{L^{\infty}_{x}}\leq\varepsilon_1,
\end{equation}
where $B_{jk}=\partial_{k}A_{j}-\partial_{j}A_{k}$.


Assume that
$e^{\gamma|x|^{2}}u(x,0)\in L^{2}(\mathbb{R}^{d})$ and $e^{\gamma|x|^{2}}u(x,1)\in L^{2}(\mathbb{R}^{d})$ for $\gamma>\frac{1}{2}+\frac{N\varepsilon_{1}}{16}$. Then $u\equiv0$. Here, $N=\max\{384c_{d},544\}$ and $c_{d}$ is constant depending on $d$.
\end{theorem}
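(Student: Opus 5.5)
We follow the Escauriaza--Kenig--Ponce--Vega scheme for the heat equation \cite{EKPV-CMP16,EKPV-JEMS}, adapted to the magnetic setting of \cite{BFGRV-JFA}. The scheme has three steps: (i) propagate the Gaussian decay from $t=0,1$ to all intermediate times by logarithmic convexity; (ii) prove a lower bound for nontrivial solutions via a Carleman estimate; (iii) compare the two bounds. The split structure $G=I+\widetilde G_1$ of Hypothesis \ref{assm2} is central throughout. In the direction $\mathbf e_1$ the operator looks flat: $g_{11}=1$, there are no mixed entries $g_{1k}$, and $\tilde G$ does not depend on $x_1$. Accordingly, we will use weights depending mainly on $x_1$, for instance $e^{\mu|x_1+R(t)|^2}$ or the linear weights $e^{\lambda\cdot x}$ with $\lambda$ parallel to $\mathbf e_1$. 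The commutator of the weight with $\operatorname{div}_A(G\nabla_A\cdot)$ then contains no derivatives of $\tilde G$. The only magnetic contribution it produces is $(G\mathbf e_1)^\top B=\mathbf e_1^\top B$, which is bounded by $\varepsilon_1$. This is why only $\varepsilon_1$ appears in the threshold for $\gamma$.

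\textbf{Step 1: logarithmic convexity.} Set $f=e^{\phi}u$, where $\phi(x,t)=\mu|x_1|^2/(a+bt(1-t))$-type, as in \cite{EKPV-CMP16}. Then $\partial_t f=\mathcal Sf+\mathcal Af+F$, with $\mathcal S$ symmetric, $\mathcal A$ antisymmetric, and $F$ collecting the terms coming from $V$. We compute $\partial_t^2\log\|f\|^2$, which requires the form $\langle(\partial_t\mathcal S+[\mathcal S,\mathcal A])f,f\rangle$. For the heat equation the gauge $\nabla_A$ enters $\mathcal S$ and $\mathcal A$ through $\nabla_A\phi$. The commutator produces
\begin{itemize}
\item a positive term $\langle D^2\phi\,G\nabla_Af,G\nabla_Af\rangle$ from the $x_1$-direction;
\item terms involving $\nabla\tilde G$, which vanish because $\phi$ depends on $x_1$ only;
\item a magnetic cross term $\operatorname{Im}\int \partial_1\phi\,(\mathbf e_1^\top B)\cdot\nabla_Af\,\bar f$.
\end{itemize}
We absorb the magnetic cross term by Cauchy--Schwarz into the positive gradient term and $\mu^2|x_1|^2|f|^2$, losing a factor $(1-C\varepsilon_1)$. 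We bound $V_1$ trivially and handle $V_2$ through the Gaussian weight in \eqref{syl}. This gives
\[
\sup_{t\in[0,1]}\|e^{\mu_t|x_1|^2}u(t)\|_{L^2}+\|\sqrt{t(1-t)}\,e^{\mu_t|x_1|^2}\nabla_Au\|_{L^2(L^2)}\lesssim \|e^{\gamma|x|^2}u_0\|+\|e^{\gamma|x|^2}u(1)\|.
\]
Here $\mu_t$ interpolates between the endpoint weights as in the sharp heat case. The $\varepsilon_1$-loss is tracked explicitly, so that the interior decay exponent $\mu$ behaves like $\gamma-\frac{N\varepsilon_1}{16}$ up to the constant $N=\max\{384c_d,544\}$.

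\textbf{Step 2: Carleman lower bound.} We prove a Carleman inequality
\[
\|e^{\psi}(\partial_t-\operatorname{div}_A(G\nabla_A))g\|\ge c\,R\,\|e^\psi g\|
\]
for compactly supported $g$ on $\mathbb R^d\times[0,1]$, with $\psi=\mu|x_1+Rt(1-t)\mathbf e_1|^2$-type weights. Again we use that the weight sees only $x_1$, so $\tilde G$ does not enter and the magnetic error is at most $\varepsilon_1 R$. We then apply this to $g=\theta_R(x)\eta(t)u$, with cutoffs localizing to $|x|\lesssim R$ and $t\in[1/8,7/8]$. Combined with the $\nabla_A u$ bound from Step 1, this yields
\[
\|u\|_{L^2(B_1\times[3/8,5/8])}\lesssim e^{-cR^2}
\]
whenever $\gamma$ exceeds the threshold. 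Since the heat equation gives $u(t)\ne0$ for all $t$ whenever $u\not\equiv0$ (backward uniqueness and standard interior estimates), the bounds contradict each other as $R\to\infty$. Hence $u\equiv0$.

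\textbf{Main obstacle.} The main difficulty is the sharp constant in Step 1. The absorption of the magnetic cross term into the positive commutator must be made quantitatively, to recover $\gamma>\frac12+\frac{N\varepsilon_1}{16}$ rather than merely a large $\gamma$. Our first attempt will be a weighted Cauchy--Schwarz with parameter $\delta\sim\varepsilon_1$, followed by re-optimization of the ODE for $\mu_t$ that closes the convexity argument, with the error constants tracked explicitly.
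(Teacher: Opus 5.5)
\textbf{Gap: the magnetic cross term cannot be absorbed with an $x_1$-only weight.} Your structural observation is correct as far as it goes. For $\phi=\phi(x_1,t)$ you have $\partial_l\phi=\delta_{l1}\partial_1\phi$, $g_{1m}=\delta_{1m}$ and $\partial_1\tilde G=0$. So every term containing derivatives of $\tilde G$, including those in $\mathcal Q^2\phi$, drops out of the commutator identity of Lemma \ref{technical-lemma}.

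The same structure breaks the step both of your Steps 1 and 2 rest on. The only gradient term with a sign is $4g_{jk}g_{lm}\partial_k\partial_l\phi\,D_mf\,\overline{D_jf}=4\partial_1^2\phi\,|D_1f|^2$. The magnetic term becomes $-4\operatorname{Im}\int\partial_1\phi\sum_{j,k\geq2}g_{jk}B_{1k}\,f\,\overline{D_jf}$, with $B_{1k}=\partial_kA_1-\partial_1A_k$. Since $B_{11}=0$ and $g_{1k}=\delta_{1k}$, it pairs $\partial_1\phi\,\mathbf{e}_1^{\top}B$ \emph{only} with the transverse derivatives $D_jf$, $j\geq2$. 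Its size is $\lesssim\varepsilon_1\mu|x_1+Rt(1-t)|\,|f|\,|(D_2f,\dots,D_df)|$, not $\varepsilon_1R$. Nothing on the positive side controls $|D_jf|^2$ for $j\geq2$. So the Cauchy--Schwarz absorption ``into the positive gradient term'' does not close unless $\mathbf{e}_1^{\top}B\equiv0$. This breaks both the lower bound $\mathcal S_t+[\mathcal S,\mathcal A]\geq-M_0$ in Step 1 and your Carleman inequality in Step 2. (In the heat case one might try to recover the transverse gradient from $\langle\mathcal Sf,f\rangle$, but that is a different argument you would have to build.)

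The paper uses isotropic weights instead: $\gamma|x|^2$ in Lemma \ref{Log-convex-special}, and $\mu|x+Rt(1-t)\mathbf{e}_1|^2+\frac{R^2t(1-t)(1-2t)}{6}-\frac{(1+\varepsilon+N\mu^2\varepsilon_1)R^2t(1-t)}{16\mu}$ in the Carleman estimate. Their Hessian gives $8\mu|G\nabla_Af|^2$ in every direction. The price is exactly what you tried to avoid:
\begin{itemize}
\item The $\tilde G$-terms return. Only indices $\geq2$ survive, so they are bounded by $\mu\sup_{x'}|x'||\nabla\tilde G(x')|$ and absorbed.
\item The magnetic term splits. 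The $(Gx)^{\top}B$ part uses the extra assumption $(Gx)^{\top}B\in L^\infty$ from \eqref{wyy2}, required by both the convexity lemma and the Carleman estimate; it costs only a largeness condition on $R$.
\item The part $-8\mu R\int t(1-t)(G\mathbf{e}_1)^{\top}BGf\,\overline{\nabla_Af}$ costs $64\mu R^2\varepsilon_1\int|f|^2$. It is paid for by the term $N\mu^2\varepsilon_1$ built into the weight, which is why $N\geq512$.
\end{itemize}

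\textbf{Where the threshold comes from, and the cutoffs.} In the paper the convexity step loses nothing in the exponent; it only gives $N_\gamma<\infty$ with the same $\gamma$. The condition $\gamma>\frac12+\frac{N\varepsilon_1}{16}$ comes from the Carleman step alone. There one needs $\mu\leq\gamma/(1+\varepsilon)$ on the cutoff errors, while the weight stays above $\mathcal C_{\varepsilon,\mu}R^2$, with $\mathcal C_{\varepsilon,\mu}>0$, on $B_{\mathfrak f(\varepsilon)R/8}\times[\frac{1-\varepsilon}{2},\frac{1+\varepsilon}{2}]$. Your plan to get it from a lossy convexity estimate is unexecuted and is not the mechanism.

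Your cutoffs would also need to change. The paper lets the spatial cutoff $\theta_M$ go to infinity at fixed $R$, using $N_\gamma<\infty$. It switches off in time within $1/R$ of $t=0,1$, where $Rt(1-t)=O(1)$ and the weight is at most $\gamma|x|^2+O(R)$. A fixed time cutoff on $[\frac18,\frac78]$ places the error where $Rt(1-t)\sim R$. There the available bound loses $e^{\frac{\gamma\mu}{\gamma-\mu}R^2t^2(1-t)^2}$, which exceeds the gain $e^{\mathcal C_{\varepsilon,\mu}R^2}$ once $\gamma$ is near the threshold. A spatial cutoff at scale comparable to $R$ is likewise not paid for, all the more since your Step 1 gives decay only in $x_1$.

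\textbf{Endgame.} Knowing $u=0$ on $B_1\times[\frac38,\frac58]$ plus backward uniqueness does not give $u\equiv0$ without a spatial unique continuation theorem. The paper lets the ball $B_{\mathfrak f(\varepsilon)R/8}$ grow with $R$, obtaining $u\equiv0$ on $\mathbb{R}^d\times[\frac{1-\varepsilon}{2},\frac{1+\varepsilon}{2}]$. It then concludes via $\|u(0)\|_{L^2}\sim_{\mathcal C_V}\|u(t)\|_{L^2}$ from Hypothesis \ref{assum3}.
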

\begin{remark}
The difference between the Schrödinger and heat cases can also be seen at the level of the underlying virial-type structure. In the parabolic setting, the dissipative structure provides sufficient coercivity to absorb the metric-related terms, whereas in the Schrödinger case such coercivity is lacking and $\varepsilon_{0}$ must be incorporated into the weight. In contrast, $\varepsilon_{1}$, associated with the magnetic field, cannot be removed even in the heat case, as the corresponding terms are antisymmetric and cannot be absorbed.
\end{remark}
\begin{remark}
Similarly to \cite{BCF-SIAM}, we can relax the strong degeneracy condition $(G\textbf{e}_1)^{\top}B=0$ to the smallness condition $\|(G\textbf{e}_1)^{\top}B\|_{L^\infty}<\varepsilon_1$ for $\varepsilon_1<1$ in the variable-coefficient case $G\neq I$ when $A$ is independent of time variable. This allows our results to cover the case of nontrivial magnetic fields in two dimensions, whereas the strong degeneracy condition implies that the magnetic field is trivial. Under the strong degeneracy condition in higher dimensions $d\geq3$, one can still allow nontrivial magnetic potentials. Moreover, the uniqueness result holds for $\gamma>\frac{1}{2}$ in the parabolic setting, matching the result in \cite{EKPV-JEMS,BFGRV-JFA}.
\end{remark}

\begin{remark}
We now give an example for a general matrix $G$ and magnetic field $B$ in dimension $d=3$:
\begin{equation}
    G=\Big(1+\frac{\varepsilon_0}{100\langle x\rangle^3}\Big)I,
\end{equation}
\begin{equation}
    B(x)=\frac{\varepsilon_{1}}{\langle x\rangle^2}(x_{2},-x_{1},0).
\end{equation}
Then $G$ satisfies Hypothesis \ref{assm1}, and
\begin{equation}
    \Big(1+\frac{\varepsilon_0}{100\langle x\rangle^3}\Big)x^\top B=0.
\end{equation}
Next, for the special matrix $G$, let
\begin{equation}
    G=I+\begin{pmatrix}
    0&\mathbf{0}^\top \\
\mathbf{0} & \frac{\varepsilon_0}{100\langle x^{\prime}\rangle^{3}}I_{2}
    \end{pmatrix}.
\end{equation}
Then $G$ satisfies Hypothesis \ref{assm2}, and
\begin{equation}
    \Big|(G\textbf{e}_{1})^{\top}B\Big|=\Big|\textbf{e}_{1}^{\top} B\Big|=\frac{\varepsilon_{1}\big|x_{2}\big|}{\langle x\rangle^{2}}<\varepsilon_{1}.
\end{equation}
\end{remark}

\subsection{Outline of the proofs and difficulties}

In this subsection, we briefly describe the strategy of the proofs and explain the main difficulties.

The proofs rely on a combination of logarithmic convexity arguments and Carleman estimates adapted to the evolution equation. Together, these two ingredients yield the desired uniqueness theorems. In the derivation of logarithmic convexity properties, the solution $u$ is required to belong to $H^{1}(\mathbb{R}^{d})$. Since the initial data are only assumed to lie in $L^{2}(\mathbb{R}^{d})$, we introduce the following Ginzburg--Landau regularized equation,
\begin{equation}\label{equation of u-outline}
    \partial_{t}u=(a+ib)\big(\operatorname{div}_{A}(G\nabla_{A}u)+V(x,t)u+F(x,t)\big),
\end{equation}
which is a parabolic regularization admitting solutions in $H^{1}(\mathbb{R}^{d})$; see the Appendix. The derivation of logarithmic convexity follows the standard strategy of \cite{EKPV-JEMS} and \cite{BFGRV-JFA}. 

Commutator estimates are the core ingredient both in the logarithmic convexity argument and in the Carleman estimate. In the above-mentioned works, the symmetric part $\mathcal{S}$ and the skew-symmetric part $\mathcal{A}$ are relatively simple, so that the commutator $[\mathcal{S},\mathcal{A}]$ can be computed directly. In the variable-coefficient case, however, a direct computation of $[\mathcal{S},\mathcal{A}]$ becomes substantially more difficult. Instead, motivated by the virial identity in the constant-coefficient case, we derive the commutator identity after integration by parts. This differs from the approach of Federico--Li--Yu \cite{Yu-CCM}, where $[\mathcal{S},\mathcal{A}]$ is computed directly for $\operatorname{div}(G\nabla\cdot)$.

The next step is to establish a Carleman estimate, which is a fundamental tool in uniqueness problems for partial differential equations. More precisely, a Carleman estimate is an exponentially weighted inequality of the form
\begin{equation}
\|e^{-\beta\phi}w\|_{L^{2}(\Omega)}\leq C\|e^{-\beta\phi}P(x,D_{x})w\|_{L^{2}(\Omega)}.
\end{equation}

In the general metric case, we use a natural space-time separated weight function $\mu|\frac{x}{R}|^{2}+\mu^{2^{-\ell}}\varphi(t)$, where $R$ and $\mu$ are large parameters, and $\ell$ is chosen to fit the framework of super-quadratic exponential decay $e^{\sigma|x|^{2+\vartheta}}$. For the flat metric, the weight function in \cite{EKPV-CPDE} has the form $\mu|\frac{x}{R}+\varphi(t)\mathbf{e}_{1}|^{2}$. However, in the variable-coefficient setting, such a mixed space-time weight becomes difficult to handle. To overcome this issue, we introduce a space-time separated weight function.

With the Carleman estimate in hand, we establish a lower bound for the energy and mass on an annulus,
\begin{equation}
    \delta(R):=\int_{\frac{1}{8}}^{\frac{7}{8}}\int_{B_{R}\backslash B_{R-1}}\Big(|u|^{2}+|\nabla_{A}u|^{2}\Big)\,\dd x\dd t\geq Ce^{-C_{0}R^{\frac{6}{3-2^{-\ell}}}}.
\end{equation}
On the other hand, logarithmic convexity provides the upper bound
\begin{equation}
    \delta(R)\leq e^{-\sigma R^{2+\vartheta}}.
\end{equation}
Choosing $\ell$ sufficiently large so that $2+\vartheta>\frac{6}{3-2^{-\ell}}$, we obtain a contradiction for large $R$, which implies $u\equiv0$.

We now explain why we do not use the weight function $\mu|\frac{x}{R}+\varphi(t)\textbf{e}_{1}|^{2}$ from \cite{EKPV-CPDE}. In the Carleman computation, one has to deal with commutator terms such as
$$
\int_{\mathbb{R}^{d+1}}2g_{jk}\partial_{k}g_{lm}\partial_{l}\varphi D_{m}u\overline{D_{j}u}\,\dd x\dd t.
$$
When the weight is space-time separated, $\partial_{l}\varphi=2\mu\frac{x_{l}}{R^{2}}$, and this term can be estimated directly; see \eqref{K57} below. By contrast, for the weight $\mu|\frac{x}{R}+\varphi(t)\textbf{e}_{1}|^{2}$, one has $\partial_{1}\phi=2\mu\frac{x_{1}}{R^{2}}+\mu\varphi(t)$. In addition to the term $2\mu\frac{x_l}{R^{2}}$, one must then control the lower bound
$$
\int_{\mathbb{R}^{d+1}}g_{jk}\partial_{k}g_{1m}\mu\varphi(t)D_{m}u\overline{D_{j}u}\,\dd x\dd t\geq-c_{d}\mu\|\varphi\|_{L^{\infty}}\int_{\mathbb{R}^{d+1}}|\nabla_{A}u|^{2}\,\dd x\dd t,
$$
which cannot be absorbed by $\frac{4\mu}{R^{2}}\int_{\mathbb{R}^{d+1}}|\nabla_{A}u|^{2}\,\dd x\dd t$ in the commutator; see \eqref{zrh} below for more details.

A more delicate question is whether uniqueness can still be obtained under quadratic exponential decay at times $t=0$ and $t=1$, namely in the Hardy uncertainty regime. Unfortunately, the above argument does not work well in this limiting case. Indeed, when $\vartheta=0$, one needs to let $\ell\to\infty$ in order to eliminate the effect of $\vartheta$, and the weight function degenerates into $\mu|x/R|^2+\varphi(t)$. The resulting loss of powers of $\mu$ in front of $\varphi(t)$ creates a serious obstruction in establishing the lower bound. More precisely, one is led to the condition
\begin{align*}
\mu^{3}R^{-6}\varepsilon^{2}e^{2(\mu\frac{4\varepsilon^{2}}{R^{2}}+3)}E_{2}^{2}\geq 2C_{1}(\mu+\mu^2)E_{1}^{2}+C_{2}e^{2(\mu\frac{4\varepsilon^{2}}{R^{2}}+3)}E_{1}^{2}.
\end{align*}
This implies that either $\mu^3R^{-6}\varepsilon^2E_2^2>C_1E_1^2\mu^2$ or $e^{\frac{8\varepsilon \mu}{R^2}}>C_1E_1^2\mu^2$ must hold. Since we only know that $\frac{\mu}{R^2}>C(\|V\|_{L^\infty},\varepsilon)$, both cases require $\mu>R^{2+\tilde\varepsilon_1}$ for some $\tilde \varepsilon_1>0$, which makes quadratic exponential decay unattainable. In other words, to improve Theorem \ref{thm1} one must use a space-time mixed weight function, which in turn imposes an additional restriction on $G$.

In the Carleman estimate, the presence of $G$ together with the weight function generates terms of the form
\begin{equation}\label{case}
\int_{\mathbb{R}^{d+1}}\partial_{l}g_{lm}\partial_{m}\partial_{k}g_{kj}(x_{j}+R\textbf{e}_{1}\delta_{1j})|\nabla_{A}f|^{2}\,\dd x\dd t,\,\,\,\int_{\mathbb{R}^{d+1}}g_{lm}\partial_{l}\partial_{m}\partial_{k}g_{jk}(x_{j}+R\textbf{e}_{1}\delta_{1j})|\nabla_{A}f|^{2}\,\dd x\dd t.
\end{equation}
One would like to absorb these terms by $8\mu\int_{\R^{d+1}}|\nabla_Af|^2\,\dd x\dd t$, but several terms cannot be controlled in this way. For simplicity, we list only two typical examples:
\begin{equation}
R\int_{\mathbb{R}^{d+1}}\partial_{l}g_{lm}\partial_{m}\partial_{k}g_{k1}\textbf{e}_{1}|\nabla_{A}f|^{2}\,\dd x\dd t,\,\,R\int_{\mathbb{R}^{d+1}}g_{lm}\partial_{l}\partial_{m}\partial_{k}g_{1k}\textbf{e}_{1}|\nabla_{A}f|^{2}\,\dd x\dd t.
\end{equation}
Fortunately, under Hypothesis \ref{assm2}, these terms vanish, and many other problematic terms enjoy similar cancellations. At the same time, $\sup_{x\in\mathbb{R}^{d}}|x||\nabla G|$ is replaced by $\sup_{x'\in\mathbb{R}^{d-1}}|x'||\nabla\tilde{G}(x')|$.

We now turn to Theorem \ref{schrodinger}, which is proved under Hypothesis \ref{assm2} and \ref{assum3}. As explained above, the bad terms disappear for this special class of matrices $G$. In order to recover quadratic exponential decay and compensate for the effect of $G$, we introduce the space-time mixed weight
\begin{equation}
\psi(x,t):=\mu|x+Rt(1-t)\textbf{e}_{1}|^{2}-\frac{(1+\varepsilon+N(\varepsilon_{0}+\varepsilon_{1})\mu^{2})R^{2}t(1-t)}{16\mu},
\end{equation}
where $\varepsilon_0$ measures the size of $G$ in Hypothesis \ref{assm1}, while $\varepsilon_1$ measures the size of the magnetic field through $\big\|(G\mathbf{e}_1)^{\top}B\big\|_{L^\infty}\leq\varepsilon_1$.
The purpose of the term $N(\varepsilon_0+\varepsilon_1)\mu^{2}$ in the weight is to absorb the additional contributions produced by the matrix $G$ and the nondegeneracy of the magnetic field $B$. If one uses instead the weight from \cite{EKPV-JEMS}, these terms cannot be absorbed solely by the integral $\frac{\varepsilon R^{2}}{8\mu}\int_{\mathbb{R}^{d+1}}|f|^{2}\,\dd x\dd t$. Combining logarithmic convexity and Carleman estimates then yields the desired uniqueness result.

For the heat equation, we use the space-time mixed weight
\begin{equation}
    \psi(x,t):=\mu|x+Rt(1-t)\textbf{e}_{1}|^{2}+\frac{R^{2}t(1-t)(1-2t)}{6}-\frac{(1+\varepsilon+N\varepsilon_{1}\mu^{2})R^{2}t(1-t)}{16\mu}.
\end{equation}

In this case, the parameter $\varepsilon_0$ disappears since the dissipative structure changes the commutator analysis compared with the Schr\"odinger case. More precisely, in the parabolic setting, the dissipative term provides sufficient coercivity to absorb the contributions involving derivatives of $G$. In contrast, in the Schr\"odinger case, the commutator lacks such coercivity, so these terms cannot be controlled and must be compensated by incorporating $\varepsilon_0$ into the weight. Here, $\varepsilon_0$
quantifies the variation of the metric $G$ through its derivatives, which generate these additional terms; accordingly, it is introduced in the weight to compensate for them.

The parameter $\varepsilon_1$, associated with the magnetic field, cannot be removed even in the heat case, since the corresponding contributions arise from antisymmetric structures in the commutator and cannot be absorbed by the coercive part of the operator. Moreover, $\varepsilon_1$ measures the strength of the magnetic field, which determines the size of these contributions; therefore, the same parameter is incorporated into the weight to produce a compensating term of comparable order.

The proof of Theorem \ref{parabolic} then follows a strategy similar to that of the Schr\"odinger case.

These results show that Hardy-type uncertainty principles remain stable under small geometric and magnetic perturbations, and provide a framework for further study of dispersive and parabolic equations in variable geometric settings.

To conclude, the main novelty of this paper is that it establishes Hardy-type uncertainty principles and unique continuation results in a setting where variable geometry and magnetic effects are both present. To the best of our knowledge, this provides the first unified treatment of the variable-coefficient covariant case under perturbative assumptions on the metric and the magnetic field. We stress that the proof is not a straightforward combination of existing arguments: the interaction between the metric and the magnetic structure produces new commutator terms and requires the introduction of adapted weight functions and refined Carleman estimates. In this sense, the paper contributes both new results and a method that may be useful in further studies of dispersive and parabolic equations with covariant variable-coefficient structure.

\textbf{Notations. }\,
We write $X \lesssim Y$ or $Y \gtrsim X$ to mean that $X \leq CY$ for some absolute constant $C>0$. We denote by $O(Y)$ any quantity $X$ such that $|X|\lesssim Y$. The notation $X\sim Y$ means that $X\lesssim Y\lesssim X$. The quantity $o(1)$ denotes a term converging to zero. Finally, we use the Japanese bracket notation $\langle x\rangle=\sqrt{1+|x|^2}$.	
	
\section{Preliminaries}

In this section, we introduce the notation and basic identities that will be used throughout the paper, and we establish the logarithmic convexity properties for variable-coefficient covariant Schr\"odinger flows. These estimates will later provide the weighted upper bounds needed in the proof of the Carleman inequalities and the uniqueness results.

Let $A:=(A_1(x),\dots, A_d(x)):\R^{d}\to\R^d$ be the magnetic potential. The associated magnetic field is defined by
\begin{align*}
    B(x):=\nabla A(x)-(\nabla A)^{\top}(x):=\big[\partial_jA_k-\partial_kA_j\big]_{j,k=1}^d.
\end{align*}
Thus, $B$ can be viewed as the antisymmetric gradient of $A$. In the variable-coefficient setting, we also define the matrix-valued magnetic field $B_G=(B_{jk}^G)_{j,k=1}^d$ by
\begin{align*}
    B_{jk}^G(x)=g_{jk}(\partial_jA_k-\partial_kA_j).
\end{align*}
Moreover, we introduce the vector field $\Psi(x)=(Gx)^{\top}B=x^{\top}B_G$, whose components are given by
\begin{align*}
    \Psi_{jk}(x):=g_{jk}x_jB_{jk}(x).
\end{align*}

\subsection{Logarithmic convexity}

In this subsection, we establish the logarithmic convexity estimates that will later be combined with the Carleman lower bounds. As in related works, we first work with a parabolic regularization, which allows us to justify the weighted commutator computations at the $H^1$ level.

We begin with the following Ginzburg--Landau-type equation:
\begin{equation}\label{equation of u}
    \partial_{t}u=(a+ib)\big(\operatorname{div}_{A}(G\nabla_{A}u)+V(x,t)u+F(x,t)\big).
\end{equation}

Our first lemma provides the weighted virial-type identity that underlies the logarithmic convexity argument.

For generality, we provide the virial identity for Schr\"odinger operator with time dependent electromagnetic field.
\begin{lemma}\label{technical-lemma}
Let $u$ be a solution to \eqref{equation of u}, where $a\in\R_+$, $b\in\R$, $A(t,x):\R^{d+1}\to\R^d$, $V,F:\R^{d+1}\to\C$, and $G:\R^{d}\to\R^d$.   
Let $v=e^{\varphi}u$. Then $v$ solves
\begin{equation}\label{equation of v}
    \partial_{t}v=(\mathcal{S}+\mathcal{A})v+(a+ib)(V(x,t)v+e^{\varphi}F),
\end{equation}
where
\begin{align}\label{symmetry part}
\mathcal{S}=a(G\nabla\varphi\cdot\nabla\varphi+\operatorname{div}_{A}\big(G\nabla_{A}\cdot\big))+ib(-\operatorname{div}(G\nabla\varphi)-2G\nabla\varphi\cdot\nabla_{A})+\varphi_{t}
\end{align}
and
\begin{align}\label{antisymmetry part}
\mathcal{A}=ib(G\nabla\varphi\cdot\nabla\varphi+\operatorname{div}_{A}\big(G\nabla_{A}\cdot\big))+a(-\operatorname{div}(G\nabla\varphi)-2G\nabla\varphi\cdot\nabla_{A}).
\end{align}
Moreover,
\begin{equation}
\mathcal{S}_{t}=2a\Big(\operatorname{Im}(G\partial_{t}A\nabla_{A}\cdot)+G\nabla\varphi\cdot\nabla\partial_{t}\varphi\Big)+2b\Big(\operatorname{Im}(G\nabla\partial_{t}\varphi\cdot\nabla_{A})-G\nabla\varphi\cdot\partial_{t}A\Big)+\partial_{tt}\varphi\label{St}
\end{equation}
and
\begin{align*}
     &\int_{\mathbb R^d}[\mathcal{S},\mathcal{A}]f\bar{f}\,\dd x\\
=
    &(a^2+b^2)\int_{\mathbb R^d}2G\nabla\varphi\cdot\nabla(G\nabla\varphi\cdot\nabla\varphi)|f|^{2}\,dx+\int_{\mathbb R^d}2b\operatorname{Im}(G\partial_{t}\nabla\varphi\cdot\nabla_{A}f)\bar{f}\,dx\\
    &+\int_{\mathbb R^d}2aG\nabla\varphi\cdot\nabla\partial_{t}\varphi|f|^{2}\,dx
    +(a^{2}+b^{2})\Bigg\{-\int_{\mathbb R^d}\mathcal{Q}^{2}\varphi|f|^{2}\,dx\\
&-4\operatorname{Im}\int_{\mathbb R^d}g_{jk}g_{lm}\partial_{l}\varphi(\partial_{k}A_{m}-\partial_{m}A_{k})f\overline{D_jf}\\
&+4\int_{\mathbb R^d} g_{jk}g_{lm}\partial_{k}\partial_{l}\varphi D_mf\overline{D_jf}
+\int_{\mathbb R^d}2g_{jk}\partial_{k}g_{lm}\partial_{l}\varphi D_mf\overline{D_jf}\\
&-2g_{lm}\partial_{l}\varphi\partial_{m}g_{jk}D_kf\overline{D_jf}
+2\partial_{j}g_{lm}\partial_{l}\varphi g_{jk}D_kf\overline{D_m f}\Bigg\}.
\end{align*}
\end{lemma}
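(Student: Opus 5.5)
The lemma is proved by direct computation in three stages: conjugate the equation by $e^{\varphi}$ to get \eqref{equation of v} with the splitting into $\mathcal{S}$ and $\mathcal{A}$, differentiate $\mathcal{S}$ in time, and evaluate $\int[\mathcal{S},\mathcal{A}]f\bar f$ by integrating by parts. The commutator is never computed as an operator identity.

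\textbf{Conjugation.} Set $v=e^{\varphi}u$. Since $\varphi$ is real, $D_j(e^{-\varphi}v)=e^{-\varphi}(D_j-\partial_j\varphi)v$, and therefore
\begin{equation}
e^{\varphi}\operatorname{div}_A\big(G\nabla_A(e^{-\varphi}v)\big)=\operatorname{div}_A(G\nabla_A v)-2G\nabla\varphi\cdot\nabla_A v-\operatorname{div}(G\nabla\varphi)v+G\nabla\varphi\cdot\nabla\varphi\, v .
\end{equation}
Here the symmetry of $G$ combines the two cross terms. Adding $\varphi_t v$ and multiplying by $a+ib$ gives \eqref{equation of v}. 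To split the result, recall that $\operatorname{div}_A(G\nabla_A\cdot)$ and multiplication by real functions are symmetric in $L^2$, while $-\operatorname{div}(G\nabla\varphi)-2G\nabla\varphi\cdot\nabla_A$ is skew-symmetric. Indeed, integrating by parts, $\int G\nabla\varphi\cdot\nabla_A f\,\bar g=-\int f\,\overline{G\nabla\varphi\cdot\nabla_A g}-\int\operatorname{div}(G\nabla\varphi)f\bar g$. Multiplication by $i$ swaps symmetric and skew parts, so the pieces regroup exactly into \eqref{symmetry part} and \eqref{antisymmetry part}.

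\textbf{Time derivative.} The formula \eqref{St} follows by differentiating the coefficients of $\mathcal{S}$ in $t$, keeping track of the $t$-dependence of $A$ inside $\nabla_A$ and of $\varphi$. This stage is routine bookkeeping: $\partial_t D_j=-i\partial_tA_j$, and $\partial_t\operatorname{div}_A(G\nabla_A)$ produces terms of the form $\operatorname{Im}(G\partial_tA\cdot\nabla_A)$.

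\textbf{Commutator.} I would write $\int[\mathcal{S},\mathcal{A}]f\bar f=\int \mathcal{S}\mathcal{A}f\bar f-\int\mathcal{A}\mathcal{S}f\bar f$ and use symmetry and skew-symmetry to reduce everything to real parts of pairings. I would then group the contributions by their coefficients.

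The $a^2$ and $b^2$ terms come from commuting $L=\operatorname{div}_A(G\nabla_A)+G\nabla\varphi\cdot\nabla\varphi$ with $T=-\operatorname{div}(G\nabla\varphi)-2G\nabla\varphi\cdot\nabla_A$. They appear with the same sign, giving the factor $a^2+b^2$; the $ab$ cross terms cancel because each operator commutes with itself.

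Commuting $\varphi_t$ with $\mathcal{A}$ produces the terms with $\partial_t\nabla\varphi$. These are the $2aG\nabla\varphi\cdot\nabla\varphi_t|f|^2$ term and the $2b\operatorname{Im}(G\nabla\varphi_t\cdot\nabla_A f)\bar f$ term.

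It remains to treat $\int[L,T]f\bar f$. The contribution of $[G\nabla\varphi\cdot\nabla\varphi,T]$ is $2G\nabla\varphi\cdot\nabla(G\nabla\varphi\cdot\nabla\varphi)|f|^2$. The principal part is $-2\operatorname{Re}\int\big(\operatorname{div}_A(G\nabla_Af)\,\overline{(2G\nabla\varphi\cdot\nabla_A+\operatorname{div}(G\nabla\varphi))f}\big)$, and I would expand it by integrating by parts in $x_j$ with the commutation rule $[D_j,D_k]=-i(\partial_jA_k-\partial_kA_j)$. This rule generates the magnetic term $-4\operatorname{Im}\int g_{jk}g_{lm}\partial_l\varphi B\, f\overline{D_jf}$. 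The derivatives falling on $\partial_l\varphi$ give $4g_{jk}g_{lm}\partial_k\partial_l\varphi D_mf\overline{D_jf}$. The derivatives falling on $G$ give the three $\partial g$ terms, one from each place $G$ appears: the outer $g_{jk}$, the inner $g_{lm}$, and the symmetrization $\operatorname{Re}(D_kf\overline{D_jf}\partial_lg_{jk}g_{lm}\partial_m\varphi)$ obtained after writing $2\operatorname{Re}(D_mf\overline{D_mD_jf})$-type expressions as derivatives of $|D f|^2$ and integrating by parts once more. All terms without derivatives on $f$ collect into $-\mathcal{Q}^2\varphi|f|^2$, where $\mathcal{Q}^2\varphi$ denotes the fourth-order expression $\operatorname{div}(G\nabla\operatorname{div}(G\nabla\varphi))$ (or the paper's equivalent). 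For this I would use $\operatorname{Re}\int \operatorname{div}(G\nabla\varphi)\,\overline{f}\,\operatorname{div}_A(G\nabla_Af)=-\int G\nabla\operatorname{div}(G\nabla\varphi)\cdot\operatorname{Re}(\bar f\nabla_Af)-\dots$ together with $2\operatorname{Re}(\bar f D_j f)=\partial_j|f|^2$.

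\textbf{Main obstacle.} I expect the hardest part to be this last bookkeeping: the terms involving $\partial G$ and the magnetic term must be tracked so that the third-order derivatives of $f$ cancel exactly and only quadratic forms in $\nabla_A f$ survive. As the outline suggests, I would handle it in virial style. I would compute $\frac{d}{dt}\operatorname{Re}\int \bar f\, G\nabla\varphi\cdot\nabla_A f$-type quantities in divergence form, and justify all integrations by parts first for Schwartz $f$, then by density in $H^1$ with the weighted decay.
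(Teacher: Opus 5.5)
Your proposal is correct and is essentially the paper's proof. You use the same splitting $\int[\mathcal S,\mathcal A]f\bar f=(a^2+b^2)\int[L,T]f\bar f+\int[\varphi_t,\mathcal A]f\bar f$ with your $L,T$ (the paper's $I_1,I_2,I_3$), together with the same integration by parts and covariant commutation rule. The only difference is that you evaluate $I_2$ as $2\operatorname{Re}\int\operatorname{div}_A(G\nabla_Af)\,\overline{Tf}$ and symmetrize, rather than first expanding the operator into $\mathcal B_2^1$ and $\mathrm{I}$--$\mathrm{IX}$.

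One correction to your bookkeeping narrative: in your route the first and third $\partial G$ terms arise together as $4\operatorname{Re}\int g_{jk}\partial_jg_{lm}\partial_m\varphi\,D_kf\,\overline{D_lf}$, from differentiating $G\nabla\varphi$. Only the middle term comes from the outer $G$, and only through the symmetrization, so it is not one term from each place $G$ appears.

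One caution: done carefully, the magnetic term comes out as $+4\operatorname{Im}\int g_{jk}g_{lm}\partial_l\varphi(\partial_kA_m-\partial_mA_k)f\,\overline{D_jf}$, which is opposite to the sign displayed in the statement. The discrepancy traces to \eqref{357}, where the paper uses $D_kD_m-D_mD_k=+i(\partial_kA_m-\partial_mA_k)$, contradicting its own commutation identity. This is harmless, since that term is only ever estimated in absolute value.
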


\begin{proof}
We first prove \eqref{St}. A direct computation gives
\begin{align*}
    \mathcal{S}_{t}
    =&\, a\Big(2G\nabla\varphi\cdot\nabla\partial_{t}\varphi-i\partial_{t}A\,G\nabla_{A}\cdot-\nabla_{A}\cdot(G\,i\partial_{t}A\cdot)\Big)\\
    &+ib\Big(-\operatorname{div}(G\nabla\partial_{t}\varphi)-2G\nabla\partial_{t}\varphi\cdot\nabla_{A}-2G\nabla\varphi\cdot(-i\partial_{t}A)\Big)+\varphi_{tt}.
\end{align*}

Let 
\[
D^{\star}=\nabla_{A}\cdot(G\partial_{t}A\cdot).
\]
Then $D=-G\partial_{t}A\,\nabla_{A}\cdot$, and therefore
\begin{align*}
&a\Big(2G\nabla\varphi\cdot\nabla\partial_{t}\varphi-i\partial_{t}A\,G\nabla_{A}\cdot-(\nabla-iA)\cdot(G\,i\partial_{t}A\cdot)\Big)\\
=&\,a\Big(iD-iD^{\star}+2G\nabla\varphi\cdot\nabla\partial_{t}\varphi\Big)\\
=&\,2a\Big(\frac{-D+D^{\star}}{2i}+G\nabla\varphi\cdot\nabla\partial_{t}\varphi\Big)\\
=&\,2a\Big(-\operatorname{Im}(D)+G\nabla\varphi\cdot\nabla\partial_{t}\varphi\Big).
\end{align*}

Similarly, let
\[
D_{1}=G\nabla\partial_{t}\varphi\cdot\nabla_{A}.
\]
Its adjoint is
\[
D_{1}^{\star}=-\operatorname{div}(G\nabla\partial_{t}\varphi)-G\nabla\partial_{t}\varphi\cdot\nabla_{A}.
\]
Hence,
\begin{align*}
    &-ib\Big(\operatorname{div}(G\partial_{t}\varphi)+2G\nabla\partial_{t}\varphi\cdot\nabla_{A}-2iG\nabla\varphi\cdot\partial_{t}A\Big)\\
=&\,-ib(D_{1}-D_{1}^{\star}-2iG\nabla\varphi\cdot\partial_{t}A)\\
=&\,2b\Big(\operatorname{Im}(D_1)-G\nabla\varphi\cdot\partial_{t}A\Big).
\end{align*}

Therefore,
\begin{equation}\label{S time derivative}
\begin{aligned}
\mathcal{S}_{t}
=
2a\Big(\operatorname{Im}(G\partial_{t}A\nabla_{A}\cdot)+G\nabla\varphi\cdot\nabla\partial_{t}\varphi\Big)
+2b\Big(\operatorname{Im}(G\nabla\partial_{t}\varphi\cdot\nabla_{A})-G\nabla\varphi\cdot\partial_{t}A\Big)
+\partial_{tt}\varphi.
\end{aligned}
\end{equation}

It remains to compute the commutator term $[\mathcal{S},\mathcal{A}]$. From \eqref{symmetry part} and \eqref{antisymmetry part}, we have
\begin{align*}
    \int_{\mathbb R^d}[\mathcal{S},\mathcal{A}]f\bar{f}\,\dd x
    =&\,(a^{2}+b^{2})\int_{\mathbb{R}^{d}}[G\nabla\varphi\cdot\nabla\varphi+\operatorname{div}_{A}(G\nabla_{A}\cdot),-\operatorname{div}(G\nabla\varphi)-2G\nabla\varphi\cdot\nabla_{A}]f\bar{f}\,\dd x\\
    &+\int_{\mathbb{R}^{d}}[\partial_{t}\varphi,\mathcal{A}]f\bar{f}\,\dd x.
\end{align*}

For convenience, we decompose this into three terms:
\begin{align*}
    I_1&=(a^2+b^2)\int_{\R^d}[G\nabla\varphi\cdot\nabla\varphi,-\operatorname{div}(G\nabla\varphi)-2G\nabla\varphi\cdot\nabla_A]f\bar f\,dx,\\
    I_2&=(a^2+b^2)\int_{\R^d}[\operatorname{div}_A(G\nabla_A),-\operatorname{div}(G\nabla\varphi)-2G\nabla\varphi\cdot\nabla_A]f\bar f\,dx,\\
    I_3&=\int_{\R^d}[\partial_t\varphi,\mathcal{A}]f\bar f\,dx.
\end{align*}

The terms $I_1$ and $I_3$ are straightforward. Indeed,
\begin{align*}
 I_1
=&\,(a^2+b^2)\int_{\R^d}[G\nabla\varphi\cdot\nabla\varphi,-\operatorname{div}(G\nabla\varphi)-2G\nabla\varphi\cdot\nabla_{A}]f\bar f\,dx\\
=&\,(a^2+b^2)\int_{\R^d}[G\nabla\varphi\cdot\nabla\varphi,-2G\nabla\varphi\cdot\nabla_{A}]f\bar{f}\,dx\\
=&\,(a^2+b^2)\int_{\R^d}2G\nabla\varphi\cdot\nabla\big(G\nabla\varphi\cdot\nabla\varphi\big)|f|^2\,dx.
\end{align*}
Likewise,
\begin{align*}
  I_3
  =&\, \int_{\R^d}[\partial_{t}\varphi,ib\operatorname{div}_{A}(G\nabla_{A}\cdot)-2aG\nabla\varphi\cdot\nabla_{A}]f\bar f\,dx\\
    =&\,\int_{\R^d}\Big(-ib\big(\operatorname{div}(G\nabla\partial_{t}\varphi)+2G\nabla\partial_{t}\varphi\cdot\nabla_{A}\big)+2aG\nabla\varphi\cdot\nabla\partial_{t}\varphi\Big)|f|^2\,dx\\
    =&\,\int_{\R^d}\Big(2b\operatorname{Im}(G\nabla\partial_{t}\varphi\cdot\nabla_{A})+2aG\nabla\varphi\cdot\nabla\partial_{t}\varphi\Big)|f|^2\,dx.
\end{align*}

It therefore remains to compute $I_2$. For simplicity, let

\begin{equation}\label{Q}
\mathcal{Q}\varphi=\partial_{j}(g_{jk}\partial_{k}\varphi).
\end{equation}

Then
\begin{align*}
[\operatorname{div}_A(G\nabla_A\cdot),-\mathcal{Q}\varphi-2G\nabla\varphi\cdot\nabla_A]f
=
[\operatorname{div}_{A}(G\nabla_{A}\cdot),-\mathcal{Q}\varphi]f-[\operatorname{div}_{A}(G\nabla_{A}\cdot),2G\nabla\varphi\cdot\nabla_A]f
\stackrel{\triangle}{=}\mathcal{B}_2^1+\mathcal{B}_2^2.
\end{align*}

By definition,
\begin{align}
  \mathcal{B}_2^1=& -\operatorname{div}_{A}\big(G\nabla_{A}(\mathcal{Q}\varphi f)\big)+\mathcal{Q}\varphi\operatorname{div}_{A}(G\nabla_{A}f).\label{difficult1}
\end{align}
For the first term in $\mathcal{B}_2^1$, we obtain
\begin{align*}
    -\operatorname{div}_A\big(G\nabla_A(\mathcal{Q}\varphi f)\big)
    =&\,-D_{j}\big(g_{jk}D_{k}(\mathcal{Q}\varphi f)\big)\\
    =&\,-\partial_{j}(g_{jk}\partial_{k}(\mathcal{Q}\varphi))f-2g_{jk}\partial_{k}(\mathcal{Q}\varphi)D_{j}f-\mathcal{Q}\varphi\operatorname{div}_{A}(G\nabla_{A}f),
\end{align*}
where repeated indices are summed over. Hence
\begin{align*}
  \mathcal{B}_{2}^1
  =&\,[\operatorname{div}_{A}(G\nabla_{A}\cdot),-\mathcal{Q}\varphi]f\\
  =&\,-\partial_{j}(g_{jk}\partial_{k}(\mathcal{Q}\varphi))f-2g_{jk}\partial_{k}(\mathcal{Q}\varphi)D_jf\\
    =&\,-\mathcal{Q}^{2}\varphi f-2g_{jk}\partial_{k}(\mathcal{Q}\varphi)D_jf.
\end{align*}
Multiplying by $(a^2+b^2)\bar f$ and integrating in $x$, we get
\begin{align}
(a^2+b^2)\int_{\R^d}\mathcal{B}_2^1\bar{f}\,dx=-(a^2+b^2)\int_{\R^d}\big(\mathcal{Q}^2\varphi|f|^2+2g_{jk}\partial_{k}(\mathcal{Q}\varphi)D_jf\bar f\big)\,dx.\label{red-part1-2}
\end{align}

We now turn to $\mathcal{B}_2^2$. In local coordinates,
\begin{align*}
    \mathcal{B}_{2}^2
    &=-D_j(g_{jk}D_k(2g_{lm}\partial_{l}\varphi D_mf))+2g_{lm}\partial_{l}\varphi D_m\Big(D_j\big(g_{jk}D_kf\big)\Big)\\
    &\stackrel{\triangle}{=}\mathcal{B}_{2}^{2,1}+\mathcal{B}_{2}^{2,2}.
\end{align*}

Using Leibniz's rule for $\nabla_A$,
\begin{align*}
    \nabla_A(fh)=f\nabla h+h\nabla_Af,
\end{align*}
we obtain
\begin{align*}
    \mathcal{B}_2^{2,1}
    &=-2D_j(g_{jk}\partial_{k}g_{lm}\partial_{l}\varphi D_{m}f+g_{jk}g_{lm}\partial_{k}\partial_{l}\varphi D_{m}f+g_{jk}g_{lm}\partial_{l}\varphi D_kD_mf)\\
    &=\operatorname{I+II+III}.
\end{align*}

For $\mathcal{B}_2^{2,2}$, we use the commutation identity
\begin{equation*}
D_{m}D_{j}-D_{j}D_{m}=-(i\partial_{m}A_{j}-i\partial_{j}A_{m}),
\end{equation*}
which yields
\begin{align}
\mathcal{B}_2^{2,2}
=&\,2g_{lm}\partial_{l}\varphi D_{j}D_{m}(g_{jk}D_{k}f)-2ig_{lm}\partial_{l}\varphi(\partial_{m}A_{j}-\partial_{j}A_{m})g_{jk}D_kf.\label{B22-1}
\end{align}

Next,
\begin{align*}
    &2g_{lm}\partial_{l}\varphi D_jD_m(g_{jk}D_kf)\\
=&\,2D_j\Big(g_{lm}\partial_{l}\varphi D_m(g_{jk}D_kf)\Big)-2\partial_{j}(g_{lm}\partial_{l}\varphi)D_m(g_{jk}D_kf)\\
=&\,2D_j\Big(g_{lm}\partial_{l}\varphi\partial_{m}g_{jk}D_kf+g_{lm}\partial_{l}\varphi g_{jk}D_mD_kf \Big)-2D_m\Big(\partial_{j}(g_{lm}\partial_{l}\varphi)(g_{jk}D_kf)\Big)\\
&+2\partial_{m}\partial_{j}(g_{lm}\partial_{l}\varphi)g_{jk}D_kf\\
=&\,2D_j\Big(g_{lm}\partial_{l}\varphi\partial_{m}g_{jk}D_kf+g_{lm}\partial_{l}\varphi g_{jk}D_mD_kf \Big)-2D_m\Big(\partial_{j}g_{lm}\partial_{l}\varphi g_{jk}D_kf+g_{lm}\partial_{j}\partial_{l}\varphi g_{jk}D_kf \Big)\\
&+2\partial_{m}\partial_{j}(g_{lm}\partial_{l}\varphi)g_{jk}D_kf.
\end{align*}
Combining this with \eqref{B22-1}, we obtain
\begin{align*}
\mathcal{B}_2^{2,2}
=&\,2D_j\Big(g_{lm}\partial_{l}\varphi\partial_{m}g_{jk}D_kf+g_{lm}\partial_{l}\varphi g_{jk}D_mD_kf \Big)-2D_m\Big(\partial_{j}g_{lm}\partial_{l}\varphi g_{jk}D_kf+g_{lm}\partial_{j}\partial_{l}\varphi g_{jk}D_kf \Big)\\
&+2\partial_{m}\partial_{j}(g_{lm}\partial_{l}\varphi)g_{jk}D_kf-2ig_{lm}\partial_{l}\varphi(\partial_{m}A_{j}-\partial_{j}A_{m})g_{jk}D_kf\\
&\stackrel{\triangle}{=}\operatorname{IV+V+VI+VII+VIII+IX}.
\end{align*}

Multiplying by $\bar f$ and integrating by parts, we obtain
\begin{align}
    &\int_{\mathbb R^d} (\operatorname{III+V+IX})\bar{f}\,\dd x\nonumber\\
=&\int_{\mathbb{R}^{d}}\Big[\Big(2g_{jk}g_{lm}\partial_{l}\varphi D_kD_mf-2g_{jk}g_{lm}\partial_{l}\varphi D_mD_kf\Big)\overline{D_jf}-2ig_{lm}\partial_{l}\varphi(\partial_{m}A_{j}-\partial_{j}A_{m})g_{jk}D_kf\bar{f}\Big]\,dx\nonumber\\
     =&\int_{\mathbb{R}^{d}}\Big[2ig_{jk}g_{lm}\partial_{l}\varphi(\partial_{k}A_{m}-\partial_{m}A_{k})f\overline{D_jf}-2ig_{lm}\partial_{l}\varphi(\partial_{m}A_{j}-\partial_{j}A_{m})g_{jk}D_kf\bar{f}\Big]\,dx\nonumber\\
     =&-\int_{\mathbb{R}^{d}}4\operatorname{Im}\Big[g_{jk}g_{lm}\partial_{l}\varphi(\partial_{k}A_{m}-\partial_{m}A_{k})f\overline{D_jf}\Big]\,dx.\label{357}
\end{align}

For the terms $\operatorname{II}$ and $\operatorname{VII}$, symmetry gives
\begin{equation}
\begin{aligned}
    \int_{\mathbb{R}^{d}} (\operatorname{II+VII})\bar{f}\,dx
    =&\int_{\mathbb R^d} 2g_{jk}g_{lm}\partial_{k}\partial_{l}\varphi D_mf\overline{D_jf}+2g_{lm}\partial_{j}\partial_{l}\varphi g_{jk}D_kf\overline{D_mf}\\
 =&\int_{\mathbb{R}^{d}} \big(2g_{jk}g_{lm}\partial_{k}\partial_{l}\varphi D_mf\overline{D_jf}+2g_{lm}\partial_{k}\partial_{l}\varphi g_{kj}D_jf\overline{D_mf}\big)\,dx\\
    =&4\int_{\mathbb R^d} g_{jk}g_{lm}\partial_{k}\partial_{l}\varphi D_mf\overline{D_jf}\,dx.\label{27}
\end{aligned}
\end{equation}

The integral involving $\operatorname{VIII}$ cancels with the second term in $\mathcal{B}_2^1$, so these contributions vanish. It remains to consider $\operatorname{I}$, $\operatorname{IV}$, and $\operatorname{VI}$. A direct computation yields
\begin{equation}
\begin{aligned}
    \int_{\mathbb{R}^{d}} (\operatorname{I+IV+VI})\bar{f}\,dx
=&\int_{\mathbb{R}^{d}}\big(2g_{jk}\partial_{k}g_{lm}\partial_{l}\varphi
    D_mf\overline{D_jf}-2g_{lm}\partial_{l}\varphi\partial_{m}g_{jk}D_kf\overline{D_jf}\\
    &\qquad\qquad\qquad+2\partial_{j}g_{lm}\partial_{l}\varphi g_{jk}D_kf\overline{D_mf}\big)\,dx.\label{146}
\end{aligned}
\end{equation}

Finally, combining \eqref{red-part1-2}, \eqref{146}, \eqref{27}, and \eqref{357}, we obtain
\begin{align}
    &\int_{\mathbb{R}^{d}}[\mathcal{S},\mathcal{A}]f\bar{f}\,\dd x\nonumber\\
=
    &(a^2+b^2)\int_{\mathbb{R}^{d}}2G\nabla\varphi\cdot\nabla(G\nabla\varphi\cdot\nabla\varphi)|f|^{2}\,dx+\int_{\mathbb{R}^{d}}2b\operatorname{Im}(G\partial_{t}\nabla\varphi\cdot\nabla_{A}f)\bar{f}\,dx\\
    &+\int_{\mathbb{R}^{d}}2aG\nabla\varphi\cdot\nabla\partial_{t}\varphi|f|^{2}\,dx\nonumber\\
    &+(a^{2}+b^{2})\Bigg\{\int_{\R^d}\Big(-\mathcal{Q}^{2}\varphi|f|^{2}-4\operatorname{Im}g_{jk}g_{lm}\partial_{l}\varphi(\partial_{k}A_{m}-\partial_{m}A_{k})f\overline{D_jf}+4 g_{jk}g_{lm}\partial_{k}\partial_{l}\varphi D_mf\overline{D_jf}\Big)\,dx\nonumber\\
&+\int_{\mathbb{R}^{d}}\Big(2g_{jk}\partial_{k}g_{lm}\partial_{l}\varphi D_mf\overline{D_jf}-2g_{lm}\partial_{l}\varphi\partial_{m}g_{jk}D_kf\overline{D_jf}+2\partial_{j}g_{lm}\partial_{l}\varphi g_{jk}D_kf\overline{D_mf}\Big)\,dx\Bigg\}.\label{formula of commutator}
\end{align}
\end{proof}

The previous computation yields the Gaussian inhomogeneous estimate that will be used repeatedly in the sequel.

\begin{lemma}\label{sub-Gaussian}
Suppose that $u\in L^{\infty}([0,1],L^{2}(\mathbb R^d))\cap L^{2}([0,1],H^{1}(\mathbb R^d))$ is a solution of 
\begin{equation}
\partial_{t}u=(a+ib)\Big(\operatorname{div}_{A}\big(G\nabla_{A}u\big)+V(x,t)u+F(x,t)\Big), \qquad \mathbb R^d\times[0,1],
\end{equation}
with $a>0$ and $b\in\mathbb{R}$. Assume moreover that $V(x,t)=V_{1}(x)+V_{2}(x,t)$, where $V_{1}:\mathbb R^d\to\mathbb{R}$ and $V_{2}(x,t):\mathbb R^d\to\mathbb{C}$ satisfy
\begin{align}
\|V_{1}(x)\|_{L^{\infty}}:=M_{1}<\infty,
\end{align}
and
\begin{align}
\sup_{0\leq t\leq1}\|e^{{\gamma|\cdot|^{2}}}V_{2}(\cdot,t)\|_{L_{x}^{\infty}}e^{\sup_{0\leq t\leq 1}\|\operatorname{Im}V_{2}(\cdot,t)\|_{L_{x}^{\infty}}}:=M_{2}<\infty.
\end{align}
Then the following inhomogeneous estimate holds:
\begin{align}
        &\|e^{\frac{\gamma a}{a+4\gamma\Lambda(a^{2}+b^{2})t}|x|^{2}}u(t)\|_{L^{2}}\leq {e^{\int_{0}^{t}\|a(\operatorname{Re}V)^{+}(s)-b\operatorname{Im}V(s)\|_{L_{x}^{\infty}}\,\dd s}}\|e^{\gamma|x|^{2}}u_{0}\|_{L_{x}^{2}}\nonumber\\
&\qquad+\sqrt{a^{2}+b^{2}}\int_{0}^{t}{e^{\int_{s}^{t}\|a(\operatorname{Re}V)^{+}(\tau)-b\operatorname{Im}V(\tau)\|_{L_{x}^{\infty}}\,\dd \tau}}\|e^{\frac{\gamma a}{a+4\gamma\Lambda(a^{2}+b^{2})s}|x|^{2}}F(s,\cdot)\|_{L^{2}_{x}}\,\dd s,
\end{align}
which in particular implies
\begin{equation}
\begin{aligned}\label{inhomogeneous type}
\|e^{\frac{\gamma a}{a+4\gamma\Lambda(a^{2}+b^{2})t}|x|^{2}}u(t)\|_{L^{2}}
\leq
e^{M_{T}}\Big(\|e^{\gamma|\cdot|^{2}}u(\cdot,0)\|_{L^{2}}+\sqrt{a^{2}+b^{2}}\|e^{\frac{\gamma a}{a+4\gamma\Lambda(a^{2}+b^{2})T}}F(x,t)\|_{L^{1}_{T}L_{x}^{\infty}}\Big).
\end{aligned}
\end{equation}
\end{lemma}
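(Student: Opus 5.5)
The plan is an energy estimate for $v=e^{\varphi}u$ with the time-dependent Gaussian weight
\[
\varphi(x,t)=\frac{\gamma a}{a+4\gamma\Lambda(a^{2}+b^{2})t}\,|x|^{2}=:\kappa(t)|x|^{2},
\]
which is the standard EKPV argument adapted to the covariant operator and to the metric $G$. By Lemma \ref{technical-lemma}, $v$ solves \eqref{equation of v} with $\mathcal{S}$ symmetric and $\mathcal{A}$ antisymmetric. Pairing with $\bar v$ and taking real parts gives
\[
\tfrac12\partial_t\|v\|_{L^2}^2=\operatorname{Re}\langle \mathcal{S}v,v\rangle+\operatorname{Re}\big\langle (a+ib)Vv,v\big\rangle+\operatorname{Re}\big\langle (a+ib)e^{\varphi}F,v\big\rangle .
\]
The antisymmetric part drops out. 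The potential term is bounded by $\|a(\operatorname{Re}V)^{+}-b\operatorname{Im}V\|_{L^\infty_x}\|v\|^2$. The forcing term is bounded by $\sqrt{a^2+b^2}\,\|e^{\varphi}F\|_{L^2}\|v\|_{L^2}$. The core of the argument is to show that $\operatorname{Re}\langle\mathcal{S}v,v\rangle\le 0$.

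To verify this, compute from \eqref{symmetry part}:
\[
\operatorname{Re}\langle\mathcal{S}v,v\rangle=\int\big(a\,G\nabla\varphi\cdot\nabla\varphi+\varphi_t\big)|v|^2-a\int G\nabla_A v\cdot\overline{\nabla_A v}+(\text{the $ib$ terms}).
\]
The $ib$ part is $ib(-\operatorname{div}(G\nabla\varphi)-2G\nabla\varphi\cdot\nabla_A)$. It is symmetric, and its real quadratic form is $2b\operatorname{Im}\int G\nabla\varphi\cdot\nabla_A v\,\bar v$, since the divergence term cancels after integrating by parts. Cauchy--Schwarz with the $G$-inner product gives
\[
\Big|2b\operatorname{Im}\int G\nabla\varphi\cdot\nabla_A v\,\bar v\Big|\le a\int G\nabla_Av\cdot\overline{\nabla_Av}+\frac{b^2}{a}\int G\nabla\varphi\cdot\nabla\varphi\,|v|^2 .
\]
It follows that
\[
\operatorname{Re}\langle\mathcal{S}v,v\rangle\le\int\Big(\frac{a^2+b^2}{a}\,G\nabla\varphi\cdot\nabla\varphi+\varphi_t\Big)|v|^2 .
\]
Ellipticity gives $G\nabla\varphi\cdot\nabla\varphi\le 4\Lambda\kappa^2|x|^2$. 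It therefore suffices that
\[
\kappa'+\frac{4\Lambda(a^2+b^2)}{a}\kappa^2\le 0 ,
\]
and the chosen $\kappa$ solves this Riccati equation with equality, with $\kappa(0)=\gamma$. The magnetic potential plays no role at this stage: only $\nabla_A$ appears, and the estimate is gauge covariant.

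With these bounds, and after dividing by $\|v\|$, we get
\[
\partial_t\|v\|\le \|a(\operatorname{Re}V)^+-b\operatorname{Im}V\|_{L^\infty}\|v\|+\sqrt{a^2+b^2}\,\|e^{\varphi}F\|_{L^2}.
\]
Gronwall then yields the first inequality, using $v(0)=e^{\gamma|x|^2}u_0$.

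The main obstacle is justification, because $e^{\varphi}u$ is not a priori in $L^2$ and the integrations by parts need decay. Following \cite{EKPV-JEMS}, I would replace $|x|^2$ by a truncated weight $|x|^2/(1+\epsilon|x|^2)$ (or $\min$-type bounded weights). For such weights, $\nabla\varphi_\epsilon$ still satisfies $|\nabla\varphi_\epsilon|^2\le 4\kappa\varphi_\epsilon$-type bounds, so the Riccati inequality persists. All quantities are then finite for $u\in L^\infty L^2\cap L^2H^1$, and $a>0$ gives the parabolic regularity needed. Letting $\epsilon\to0$ by monotone convergence concludes.

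For \eqref{inhomogeneous type}, the exponential factor is bounded by $e^{M_T}$. This uses that $\|V_1\|\le M_1$, together with the bound on $\operatorname{Im}V_2$ built into $M_2$. The weight at time $s$ is dominated by the weight at time $T$ because $\kappa$ is decreasing, which is how the forcing term is handled.
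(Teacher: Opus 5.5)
Your main argument is the paper's proof. You use the same weight $\kappa(t)|x|^2$ with $\kappa'=-\frac{4\Lambda(a^2+b^2)}{a}\kappa^2$ and $\kappa(0)=\gamma$. You absorb $2b\operatorname{Im}\int G\nabla\varphi\cdot\nabla_A v\,\bar v$ into $a\int G\nabla_A v\cdot\overline{\nabla_A v}$ by the same Cauchy--Schwarz step, which gives $\operatorname{Re}\langle \mathcal{S}v,v\rangle\le 0$, and you close with Gronwall. The only difference is the truncation: the paper uses $(\min\{|x|,R\})^2*\eta_\varepsilon$ instead of your $|x|^2/(1+\epsilon|x|^2)$. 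In either case the truncated profile $\psi$ satisfies only $|\nabla\psi|^2\le 4\psi$, so the Riccati relation holds as an inequality, which is all you need. Your formulation is in fact more accurate than the paper's stated identity $\partial_t\varphi=-\Lambda(a+b^2/a)|\nabla\varphi|^2$, which fails for $|x|>R$.

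The one step that is wrong is in your last paragraph. Since $\kappa$ is decreasing, for $s\le T$ you have $\kappa(s)\ge\kappa(T)$. The weight at time $s$ therefore dominates the weight at time $T$, not the other way round, and you cannot bound $\|e^{\kappa(s)|x|^2}F(s)\|_{L^2_x}$ by a time-$T$-weighted norm. Two versions do follow from the first inequality. You can keep the time-dependent weight inside the $L^1_t$ norm. Alternatively, you can use $\kappa(s)\le\kappa(0)=\gamma$ to control the forcing by $\|e^{\gamma|x|^2}F\|_{L^1_tL^2_x}$, with the $L^2_x$ norm rather than $L^\infty_x$. The ``in particular'' inequality in the statement is itself garbled on exactly this point: there is no $|x|^2$ in the exponent and $M_T$ is undefined. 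The paper's proof never addresses it, so you should state the corrected form rather than try to match the printed one.
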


\begin{proof}
Let $v=e^{\varphi}u$, where $\varphi$ will be chosen below. Multiplying \eqref{equation of v} by $\bar v$ and integrating in $x$, we obtain
\begin{align}
\frac{1}{2}\frac{\dd}{\dd t}\|v\|_{L^{2}}^{2}
=&\operatorname{Re}\int_{\mathbb{R}^{d}}\mathcal{S}v\bar{v}\,\dd x+\operatorname{Re}\Big\{(a+ib)\int_{\mathbb{R}^{d}}(|v|^{2}V+e^{\varphi}F\bar{v})\,\dd x\Big\}\nonumber\\
=&-a\int_{\mathbb{R}^{d}}g_{jk}D_jv\overline{D_kv}\,\dd x+a\int_{\mathbb{R}^{d}}G\nabla\varphi\cdot\nabla\varphi|v|^{2}\,\dd x+\int_{\mathbb{R}^{d}}\varphi_{t}|v|^{2}\,\dd x\nonumber\\
&+2b\operatorname{Im}\int_{\mathbb{R}^{d}}G\nabla\varphi\cdot\nabla_{A}v\bar{v}\,\dd x+\operatorname{Re}\Big\{(a+ib)\int_{\mathbb{R}^{d}}(|v|^{2}V+e^{\varphi}F\bar{v})\,\dd x\Big\}.\label{v integral with symmetry}
\end{align}

We first estimate the potential and forcing terms. By taking real parts and using H\"older's inequality,
\begin{align*}
\operatorname{Re}\Bigg[(a+ib)\int_{\mathbb{R}^{d}}|v|^{2}V\,\dd x\Bigg]
\leq
\Big\|a(\operatorname{Re}V)^{+}-b\operatorname{Im}V\Big\|_{L_{x}^{\infty}}\|v\|_{L^{2}_{x}}^{2}.
\end{align*}
Similarly,
\begin{align*}
\operatorname{Re}\Bigg[(a+ib)\int_{\mathbb{R}^{d}}e^{\varphi}F\bar{v}\,\dd x\Bigg]
\leq
\sqrt{a^{2}+b^{2}}\|e^{\varphi}F\|_{L^{2}_{x}}\|v\|_{L^{2}_{x}}.
\end{align*}

Next, by the Cauchy--Schwarz inequality and the ellipticity of $G$,
\begin{align*}
2b\operatorname{Im}\int_{\mathbb{R}^{d}}\bar{v}G\nabla\varphi\cdot\nabla_{A}v\,\dd x
\leq
a\int_{\mathbb{R}^{d}}G\nabla_{A}v\cdot\nabla_{A}\bar{v}\,\dd x+\frac{b^{2}}{a}\int_{\mathbb{R}^{d}}G\nabla\varphi\cdot\nabla\varphi|v|^{2}\,\dd x.
\end{align*}

Recalling the expression of $\mathcal S$, this implies
\begin{align*}
\operatorname{Re}\int_{\mathbb{R}^{d}}\mathcal{S}v\bar{v}\,\dd x
\leq
\int_{\mathbb{R}^{d}}\Big\{\big(a+\frac{b^{2}}{a}\big)G\nabla\varphi\cdot\nabla\varphi+\partial_{t}\varphi\Big\}|v|^{2}\,\dd x
\leq
\int_{\mathbb{R}^{d}}\Big\{\big(a+\frac{b^{2}}{a}\big)\Lambda|\nabla\varphi|^{2}+\partial_{t}\varphi\Big\}|v|^{2}\,\dd x.
\end{align*}

Now choose
\begin{align*}
    \varphi(x,t)=\frac{\gamma a}{a+4\gamma\Lambda(a^{2}+b^{2})t}\Big((\min\{|x|,R\})^{2}*\eta_{\varepsilon}\Big),
\end{align*}
where $R>\varepsilon>0$ and $\eta_\varepsilon\in C^\infty_c(B_{\varepsilon})$ is a mollifier. 
A direct computation gives
\begin{align*}
    \partial_{t}\varphi=-\Lambda\Big(a+\frac{b^{2}}{a}\Big)|\nabla\varphi|^{2}.
\end{align*}
Therefore,
\begin{equation*}
\operatorname{Re}\int_{\mathbb R^d}\mathcal{S}v\bar{v}\,\dd x\leq0.
\end{equation*}

Using this estimate in \eqref{v integral with symmetry}, we find
\begin{align*}
    \frac{1}{2}\frac{\dd}{\dd t}\|v\|_{L^{2}}^{2}
    \leq
    \Big\|a(\operatorname{Re}V)^+-b\operatorname{Im}V\Big\|_{L_{x}^{\infty}}\|v\|_{L^{2}_{x}}^{2}
    +\sqrt{a^{2}+b^{2}}\|e^{\varphi}F\|_{L_{x}^{2}}\|v\|_{L_{x}^{2}}.
\end{align*}
Hence,
\begin{align*}
\frac{\dd}{\dd t}\|v\|_{L^{2}}
\leq
\Big\|a(\operatorname{Re}V)^+-b\operatorname{Im}V\Big\|_{L_{x}^{\infty}}\|v\|_{L_{x}^{2}}+\sqrt{a^{2}+b^{2}}\|e^{\varphi}F\|_{L_{x}^{2}}.
\end{align*}
An application of Gronwall's inequality gives
\begin{align}
    \|v\|_{L^{2}}
    \leq
    e^{\int_{0}^{t}\|a(\operatorname{Re}V)^{+}(s)-b\operatorname{Im}V(s)\|_{L_{x}^{\infty}}\,\dd s}\|v_{0}\|_{L_{x}^{2}}
    +\sqrt{a^2+b^2}\int_{0}^{t}e^{\int_{s}^{t}\|a(\operatorname{Re}V)^{+}(\tau)-b\operatorname{Im}V(\tau)\|_{L_{x}^{\infty}}\,\dd\tau}\|e^{\varphi}F\|_{L_{x}^{2}}\,\dd s.
\end{align}
This concludes the proof.
\end{proof}

We now recall the abstract logarithmic convexity lemma from Escauriaza--Kenig--Ponce--Vega \cite{EKPV-JEMS}, which will be applied to our weighted covariant flow.

\begin{lemma}\label{abstract lemma1}
Let $\mathcal{S}$ be a symmetric operator and $\mathcal{A}$ a skew-symmetric operator, both with coefficients depending on $x$ and $t$. Assume that $f(x,t)$ is smooth and $W$ is a positive function. Define
\begin{align*}
    H(t)=\int_{\mathbb R^d}|f|^{2}\,\dd x,
\end{align*}
and
\begin{align}
    D(t)=(\mathcal{S}f,f),\qquad \partial_{t}\mathcal{S}=\mathcal{S}_{t},\qquad N(t)=\frac{D(t)}{H(t)}.
\end{align}
Then
\begin{align}
    \partial_{t}^{2}H(t)
    =&\,2\partial_{t}\operatorname{Re}(\partial_{t}f-\mathcal{S}f-\mathcal{A}f,f)+2(\mathcal{S}_{t}f+[\mathcal{S},\mathcal{A}]f,f)\nonumber\\
    &+\|\partial_{t}f-\mathcal{A}f+\mathcal{S}f\|^{2}-\|\partial_{t}f-\mathcal{A}f-\mathcal{S}f\|^{2},\label{second derivative of H}
\end{align}
and
\begin{align}
    \partial_{t}N(t)\geq(\mathcal{S}_{t}f+[\mathcal{S},\mathcal{A}]f,f)/H-\|\partial_{t}f-\mathcal{A}f-\mathcal{S}f \|^{2}/2H.
\end{align}
Assume moreover that
\begin{align}
    \Big|\partial_{t}f-\big(\mathcal{S}+\mathcal{A}\big)f\Big|\leq M_{1}|f|+W, \qquad (t,x)\in[0,1]\times\mathbb R^d,\quad M_{1}\geq0,
\end{align}
and
\begin{align}\label{lower bound}
    \mathcal{S}_{t}+[\mathcal{S},\mathcal{A}]\geq-M_{0},\qquad M_0\geq0.
\end{align}
Let
\begin{align}
    M_{2}:=\sup_{t\in[0,1]}\frac{\big\|W(t)\big\|_{L^{2}}}{\|f(t)\|_{L^{2}}}.
\end{align}
Then $\psi(t):=\log H(t)$ is convex on $[0,1]$. In particular, if
\begin{align}\label{boundedness of H}
    H(0)<\infty\Rightarrow H(t)<\infty,\qquad \forall t\in[0,1],
\end{align}
then there exists a constant $N\geq0$ such that
\begin{align}
    H(t)\leq e^{N(M_{0}+M_{1}+M_{2}+M_{1}^{2}+M_{2}^{2})}H(0)^{1-t}H(1)^{t},\qquad \forall t\in[0,1].
\end{align}
\end{lemma}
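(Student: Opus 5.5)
The statement to prove is the abstract logarithmic convexity lemma, Lemma~\ref{abstract lemma1}. The plan is a direct computation of $H'$ and $H''$ using only that $\mathcal S$ is symmetric and $\mathcal A$ is skew-symmetric, followed by a Gronwall-type argument for a perturbed convexity inequality.

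First, since $\mathcal S$ is symmetric and $\mathcal A$ is skew-symmetric, $\operatorname{Re}(\mathcal A f,f)=0$ and $(\mathcal S f,f)$ is real. Hence
\[
H'(t)=2\operatorname{Re}(\partial_t f,f)=2\operatorname{Re}(\partial_t f-\mathcal S f-\mathcal A f,f)+2D(t).
\]
Differentiating $D=(\mathcal S f,f)$ gives
\[
D'=(\mathcal S_t f,f)+2\operatorname{Re}(\mathcal S f,\partial_t f).
\]
We write $\partial_t f=\tfrac12(\partial_t f-\mathcal A f-\mathcal S f)+\tfrac12(\partial_t f-\mathcal A f+\mathcal S f)+\mathcal A f$. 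The cross term $2\operatorname{Re}(\mathcal S f,\mathcal A f)=([\mathcal S,\mathcal A]f,f)$ produces the commutator. The other two pieces combine via the polarization identity
\[
2\operatorname{Re}(\mathcal S f,\partial_t f-\mathcal A f)=\tfrac12\|\partial_tf-\mathcal Af+\mathcal Sf\|^2-\tfrac12\|\partial_tf-\mathcal Af-\mathcal Sf\|^2 .
\]
This yields \eqref{second derivative of H} after multiplying by $2$.

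For $N'=D'/H-DH'/H^2$, we substitute $H'=2\operatorname{Re}(\partial_tf-\mathcal Sf-\mathcal Af,f)+2D$. Writing $\partial_tf-\mathcal Af+\mathcal Sf=(\partial_tf-\mathcal Af-\mathcal Sf)+2\mathcal Sf$, the terms combine to
\[
\frac{\|\mathcal Sf+\tfrac12 w\|^2\,H-|(\mathcal Sf+\tfrac12 w,f)|^2}{H^2}\cdot 2,\qquad w=\partial_tf-\mathcal Sf-\mathcal Af,
\]
up to the remainder $-\|w\|^2/(2H)$ (keeping only real parts). Cauchy--Schwarz shows the first term is nonnegative, which gives the inequality for $\partial_tN$.

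For the quantitative bound, the hypotheses give $(\mathcal S_t f+[\mathcal S,\mathcal A]f,f)\ge -M_0H$ and $\|w\|^2\le 2(M_1^2H+\|W\|^2)\le 2(M_1^2+M_2^2)H$. Hence
\[
N'\ge -M_0-M_1^2-M_2^2.
\]
Meanwhile $|\partial_t\log H-2N|=2|\operatorname{Re}(w,f)|/H\le 2(M_1+M_2)$. Setting $\psi=\log H$, we have $\partial_t(\psi+\text{linear correction})$ bounded by $2N$ plus a bounded error, with $N$ almost increasing.

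The key step is to show $\log H(t)+Ct^2$, with a linear adjustment, is convex in an integrated sense. From $\partial_t\psi(s)-\partial_t\psi(r)\ge -2(M_0+M_1^2+M_2^2)(s-r)-4(M_1+M_2)$ for $s>r$, we integrate over $[0,t]\times[t,1]$ in the standard way of \cite{EKPV-JEMS}. This gives $\psi(t)\le(1-t)\psi(0)+t\psi(1)+N(M_0+M_1+M_2+M_1^2+M_2^2)$. One should remark that literal convexity of $\psi$ holds when $M_j=0$, and in general holds up to this bounded correction.

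The hypothesis \eqref{boundedness of H} ensures $H$ is finite on $[0,1]$, so that all the quantities above are meaningful. I would justify the formal computations by the assumed smoothness of $f$.

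The main obstacle is the careful handling of real versus imaginary parts and of the term $\operatorname{Re}(w,f)$ in $H'$, which is what prevents exact convexity. I would first treat $w\equiv 0$, where $N'\ge -M_0$ and $\psi'=2N$ make the argument transparent, and then add the errors.
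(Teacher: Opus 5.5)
Your proposal is correct and is essentially the standard Escauriaza--Kenig--Ponce--Vega argument, which the paper does not reproduce but simply recalls from \cite{EKPV-JEMS}. You compute $H'$ and $D'$ by splitting $\partial_t f-\mathcal A f$ into the two halves $\tfrac12(\partial_tf-\mathcal Af\pm\mathcal Sf)$, obtain $\partial_tN\ge -M_0-M_1^2-M_2^2$ and $|\partial_t\log H-2N|\le 2(M_1+M_2)$, and integrating over $[0,t]\times[t,1]$ then gives $\log H(t)\le(1-t)\log H(0)+t\log H(1)+t(1-t)(M_0+M_1^2+M_2^2+4M_1+4M_2)$.

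Your caveat that $\log H$ is convex only up to this bounded correction, and literally convex only when all $M_j$ vanish, is accurate; that is the sense in which the statement must be read. In the $\partial_t N$ step, the exact identity contains an extra nonnegative term $(\operatorname{Re}(w,f))^2/(2H^2)$ that you drop without comment, which is harmless for the inequality.
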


We are now in a position to prove the logarithmic convexity estimate for the parabolic regularization of the variable-coefficient covariant flow.

\begin{lemma}\label{log convex lemma}
Suppose that $u\in L^{\infty}([0,1],L^{2}(\mathbb R^d))\cap L^{2}([0,1],H^{1}(\mathbb R^d))$ is a solution to 
\begin{equation}
\partial_{t}u=(a+ib)\Big(\operatorname{div}_{A}(G\nabla_{A}u)+V(x,t)u+F(x,t)\Big)
\end{equation}
in $\mathbb R^d\times[0,1]$, where $a>0$ and $b\in\mathbb{R}$. The magnetic potential $A$ satisfies Hypothesis \ref{assm1}.  
Let $\gamma>0$, and assume that
\begin{equation*}
\sup_{t\in[0,1]}\|V\|_{L^{\infty}}:=M_{1}<\infty,\qquad
\sup_{t\in[0,1]}\frac{\|e^{\gamma|\cdot|^{2}}F(\cdot,t)\|_{L^{2}}}{\|u(\cdot,t)\|_{L^{2}}}:=M_{2}<\infty.
\end{equation*}
For the magnetic field $B=DA-(DA)^{\top}$, assume that
\begin{align}
      M_{G,A}:= &(a^{2}+b^{2})\Big[16\gamma^{3}c_{d}\Lambda\sup_{x\in\mathbb R^d}(|x|^{3}|\nabla G|)+2\gamma\sup_{x\in\mathbb R^d}|x||\nabla G|\|\nabla^{2}G\|_{L^{\infty}}+2\gamma\sup_{x\in\mathbb R^d}|x||\nabla^{3}G|\nonumber\\
&\qquad\qquad
+4\gamma\|\nabla G\|^{2}_{L^{\infty}}+4\gamma\Lambda\|\nabla^{2}G\|_{L^{\infty}}+4\gamma\|(Gx)^{\top}B\|^{2}_{L^\infty}\Big]<\infty.\label{MGA}
\end{align}
If
\begin{align}
\|e^{\gamma|\cdot|^{2}}u(\cdot,0)\|_{L^{2}}+\|e^{\gamma|\cdot|^{2}}u(\cdot,1)\|_{L^{2}}<\infty,
\end{align}
then
\[
H(t)=\|e^{\gamma|\cdot|^{2}}u(t,\cdot)\|^{2}_{L^{2}}
\]
is finite and logarithmically convex on $[0,1]$. Moreover, there exists a constant $N=N(\gamma,a,b)$ such that
\begin{equation}
    H(t)\leq e^{N[M_{G,A}+\sqrt{a^{2}+b^{2}}(M_{1}+M_{2})+(a^{2}+b^{2})(M_{1}^{2}+M_{2}^{2})]}\|H(0)\|_{L^{2}}^{1-t}\|H(1)\|_{L^{2}}^{t}. \label{log convex}
\end{equation}
In addition, the following space-time estimate holds:
\begin{align}
   & \mathcal{C}_{0}\|\sqrt{t(1-t)}e^{\gamma|x|^{2}}\nabla_{A}u\|^{2}_{L^{2}(\mathbb R^d\times[0,1])}
+\mathcal{C}_{2}\|\sqrt{t(1-t)}|x|e^{\gamma|x|^{2}}u\|^{2}_{L^{2}(\mathbb R^d\times[0,1])}\nonumber\\
\leq&\mathcal{C}_{1}\sup_{t\in[0,1]}\|e^{\gamma|x|^{2}}u(t)\|^{2}_{L^2(\R^d)}
+\frac{7}{6}(a^{2}+b^{2})\sup_{t\in[0,1]}\|e^{\gamma|x|^{2}}F\|^{2}_{L^{2}(\mathbb{R}^{d})},\label{t and 1-t}
\end{align}
where
\[
\mathcal{C}_{0}=\frac{1}{2}(3\gamma a^{2}+4\gamma b^{2}-12(a^{2}+b^{2})c_{d}\gamma\Lambda\varepsilon_{0}),
\]
\[
\mathcal{C}_{1}=4\gamma^{2}(5\gamma a^{2}+4\gamma b^{2}-12(a^{2}+b^{2})c_{d}\gamma\Lambda\varepsilon_{0})\lambda+M_{G,A}+\frac{7}{6}(a^{2}+b^{2})M_{1}^{2}+3,
\]
and
\[
\mathcal{C}_{2}=4\gamma^3(5a^{2}+4b^{2}+12(a^{2}+b^{2}\gamma c_{d}\Lambda\varepsilon_{0})).
\]
\end{lemma}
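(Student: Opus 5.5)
We will follow the scheme of \cite{EKPV-JEMS, BFGRV-JFA}: apply the abstract Lemma \ref{abstract lemma1} to $f=e^{\varphi}u$ with $\varphi=\gamma|x|^{2}$, regularized as $\varphi_{\epsilon,R}$. Finiteness of $H(t)$ for all $t$ is supplied by Lemma \ref{sub-Gaussian}; it gives \eqref{boundedness of H} after replacing $\gamma$ by a slightly smaller weight and then letting the truncation parameter $R\to\infty$. Concretely, we take a truncated, mollified weight $\varphi(x)=\gamma\,\theta_R(x)$ with $\theta_R=\min\{|x|,R\}^2*\eta_\epsilon$, or alternatively the standard $\gamma|x|^2/(1+\epsilon|x|^2)$-type cutoff. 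With this choice all integrations by parts are justified for $u\in L^2_tH^1_x$, and the constants stay uniform in $R$. By Lemma \ref{technical-lemma}, $f$ solves $\partial_t f=(\mathcal S+\mathcal A)f+(a+ib)(Vf+e^{\varphi}F)$. Hence $|\partial_tf-(\mathcal S+\mathcal A)f|\le\sqrt{a^2+b^2}(M_1|f|+e^{\varphi}|F|)$, and the hypothesis of Lemma \ref{abstract lemma1} holds with $W=\sqrt{a^2+b^2}\,e^{\varphi}|F|$. Once the lower bound \eqref{lower bound} is shown with $M_0\lesssim M_{G,A}$, estimate \eqref{log convex} follows directly.

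The heart of the proof is the lower bound for $\mathcal S_t+[\mathcal S,\mathcal A]$. Since $\varphi$ is time independent and $A$ depends only on $x$, \eqref{St} gives $\mathcal S_t=0$. So we only need to bound each term in the commutator formula of Lemma \ref{technical-lemma} from below, with $\nabla\varphi=2\gamma x$ and $\nabla^2\varphi=2\gamma I$ on the untruncated region; on the truncated region the truncation keeps the Hessian nonnegative.

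The main positive terms come from two sources:
\begin{itemize}
\item $4\int g_{jk}g_{lm}\partial_k\partial_l\varphi D_mf\overline{D_jf}=8\gamma\int |G\nabla_Af|^2\ge 8\gamma\lambda^2\int|\nabla_A f|^2$;
\item $2\int G\nabla\varphi\cdot\nabla(G\nabla\varphi\cdot\nabla\varphi)|f|^2=32\gamma^3\int Gx\cdot Gx|f|^2+O(\gamma^3|x|^3|\nabla G|)|f|^2$.
\end{itemize}
The first term's error is controlled by $16\gamma^3c_d\Lambda\sup|x|^3|\nabla G|$.

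The remaining terms are absorbed as follows:
\begin{itemize}
\item The three $\nabla G$-terms are $O(\gamma|x||\nabla G||\nabla_Af|^2)$. By \eqref{wyy1} they are bounded by $\gamma\varepsilon_0\int|\nabla_Af|^2$ and absorbed into $8\gamma\lambda^2\int|\nabla_Af|^2$, which is the source of the $\varepsilon_0$ appearing in $\mathcal C_0$.
\item The term $\mathcal Q^2\varphi$ expands into terms of the form $\gamma(|x||\nabla^3G|+|x||\nabla G||\nabla^2G|+|\nabla G|^2+\Lambda|\nabla^2G|)$, which are bounded pointwise by the corresponding entries of $M_{G,A}$.
\item The magnetic term is $-8\gamma\operatorname{Im}\int (Gx)^\top B\, f\,\overline{G\nabla_Af}$. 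By Cauchy--Schwarz it is at least $-\gamma\int|G\nabla_Af|^2-4\gamma\|(Gx)^\top B\|^2_{L^\infty}\int|f|^2$, and the first piece is absorbed by the gradient term.
\end{itemize}
Collecting, we obtain $[\mathcal S,\mathcal A]\ge -M_{G,A}$ with leftover positivity $c\gamma\int|\nabla_Af|^2+c\gamma^3\int|x|^2|f|^2$. We expect the main obstacle to be the bookkeeping in this step: keeping the truncated weight's commutator errors uniform in $R$, and tracking exact constants such as $c_d$.

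For \eqref{t and 1-t}, we use identity \eqref{second derivative of H} from Lemma \ref{abstract lemma1}. Multiply it by $t(1-t)$ and integrate over $[0,1]$, integrating by parts in $t$: $\int t(1-t)\partial_t^2H=-2\int H$, plus boundary terms that vanish. This bounds $\int t(1-t)(\text{leftover positive commutator part})\,dt$ by $\sup_t H(t)$ times constants, together with the forcing contributions $\|\partial_tf-\mathcal Af-\mathcal Sf\|^2\le 2(a^2+b^2)(M_1^2H+\|e^\varphi F\|^2)$. The weights satisfy $\int t(1-t)\le 1/6$, and a further factor of $1$ leads to the $7/6$. Finally we convert $\nabla_Af$ back to $e^{\gamma|x|^2}\nabla_Au$ via $\nabla_Af=e^{\varphi}(\nabla_Au+2\gamma xu)$, which produces the $\gamma^2$ and $\gamma^3$ combinations in $\mathcal C_0,\mathcal C_1,\mathcal C_2$. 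Letting the regularization parameters go to their limits via Fatou's lemma completes the argument.
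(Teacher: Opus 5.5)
Your formal computation is the paper's. You use the same weight $\varphi=\gamma|x|^2$ and the same term-by-term lower bound for $[\mathcal S,\mathcal A]$: the $8\gamma\int|G\nabla_Af|^2$ term absorbs the magnetic and $\nabla G$ terms, the $32\gamma^3|Gx|^2$ term carries its $|x|^3|\nabla G|$ error, and $\mathcal Q^2\varphi$ is bounded by $M_{G,A}$. You also use the same $t(1-t)$ integration for \eqref{t and 1-t}.

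The gap is in the justification, and it is not cosmetic. Finiteness of $H(t)$ at interior times is part of the conclusion, and it can only come out of the regularization. Lemma \ref{sub-Gaussian} does not supply \eqref{boundedness of H}. For $t>0$ it controls only $e^{\gamma_t|x|^2}u(t)$ with $\gamma_t=\frac{\gamma a}{a+4\gamma\Lambda(a^2+b^2)t}<\gamma$, which is far below $\gamma$ when $a$ is small (as in the parabolic regularization, where $a=\varepsilon$). So you need a regularized weight $\varphi_\delta\to\gamma|x|^2$ with two properties: $H_\delta(t)$ is a priori finite, and $\mathcal S_t+[\mathcal S,\mathcal A]\ge-M_0$ holds with $M_0$ independent of $\delta$. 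Then Fatou gives $H(t)<\infty$ and \eqref{log convex}.

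That uniformity fails for both weights you name, exactly where you claim the truncation keeps the Hessian nonnegative.
\begin{itemize}
\item The profile $\min\{r,R\}^2$ has a concave kink at $r=R$: $\partial_r$ drops from $2R$ to $0$. After mollification, $\partial_r^2\varphi\approx-2\gamma R/\epsilon$ in the layer $\bigl||x|-R\bigr|\lesssim\epsilon$.
\item For $\gamma|x|^2/(1+\epsilon|x|^2)$ one has $\partial_r^2\varphi=2\gamma(1-3\epsilon r^2)(1+\epsilon r^2)^{-3}<0$ for $r>(3\epsilon)^{-1/2}$.
\end{itemize}
Already for $G=I$, $A=0$, the zeroth-order term $2G\nabla\varphi\cdot\nabla(G\nabla\varphi\cdot\nabla\varphi)=4|\nabla\varphi|^2\partial_r^2\varphi$ is then negative, of size up to $\gamma^3R^3/\epsilon$ in the first case and $\gamma^3/\epsilon$ in the second. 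Likewise the form $4\int g_{jk}g_{lm}\partial_k\partial_l\varphi\,D_mf\,\overline{D_jf}$ becomes negative in the radial direction, so it can no longer absorb the magnetic and $\nabla G$ terms. Hence no $M_0$ independent of $R$ or $\epsilon$ exists.

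What is needed is a \emph{convex} regularization growing slower than any Gaussian, so that Lemma \ref{sub-Gaussian} makes $H_\delta(t)$ finite for every $t$. The paper uses $\gamma|x|^{2-2\delta}$; the smooth variant $\gamma\langle x\rangle^{2-2\delta}$ also works. For $\delta\le\frac12$ the smallest Hessian eigenvalue of either is at least $(1-2\delta)|\nabla\varphi_\delta|/|x|$, so your absorption argument goes through uniformly as $\delta\to0$.

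Two minor slips. First, $\int_0^1t(1-t)H''\,dt=H(0)+H(1)-2\int_0^1H\,dt$: the boundary terms do not vanish, though they are $\le2\sup H$. Second, absorbing only $\gamma\int|G\nabla_Af|^2$ in the magnetic Cauchy--Schwarz costs $16\gamma\|(Gx)^{\top}B\|_{L^\infty}^2$, not $4\gamma$.
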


\begin{proof}
Let $v=e^{\varphi(t,x)}u$ with $\varphi(t,x)=\gamma|x|^{2}$. From \eqref{equation of v}, we have
\begin{equation}\label{formula of sym anti}
    |\partial_{t}v-(\mathcal{S}+\mathcal{A})v|\leq\sqrt{a^{2}+b^{2}}\Big(M_{1}v+e^{\varphi}|F|\Big).
\end{equation}
We now apply Lemma \ref{abstract lemma1}. Let
\[
W=\sqrt{a^{2}+b^{2}}e^{\varphi}F.
\]
To use the abstract lemma, it remains to verify \eqref{lower bound}. By Lemma \ref{technical-lemma} and the choice $\varphi=\gamma|x|^{2}$ (hence $\partial_t\varphi=0$), we get
\begin{align}
    &\int_{\mathbb R^d}\bar{v}\Big(\mathcal{S}_{t}+[\mathcal{S},\mathcal{A}] \Big)v\,\dd x\nonumber\\
=&\int_{\mathbb{R}^{d}}\big(2a\operatorname{Im}(G\partial_{t}A\nabla_{A}v)\bar{v}-4b\gamma Gx\cdot\partial_{t}A|v|^{2}\big)\,\dd x\nonumber\\
&+(a^{2}+b^{2})\Bigg\{16\gamma^{3}\int_{\mathbb R^d}G x\cdot\nabla(Gx\cdot x)|v|^{2}\,\dd x
-\int_{\mathbb R^d}\mathcal{Q}^{2}(\gamma|x|^{2})|v|^{2}\,\dd x\nonumber\\
&\qquad\qquad\qquad
-8\gamma\operatorname{Im}\int_{\mathbb{R}^{d}}g_{jk}g_{lm}x_{l}B_{mk}v\overline{D_jv}\,\dd x+8\gamma\int_{\mathbb{R}^{d}}g_{jk}g_{lm}\delta_{kl}D_mv\overline{D_jv}\,\dd x\nonumber\\
&\qquad\qquad\qquad
+\gamma\int_{\mathbb R^d}\Big[4g_{jk}\partial_{k}g_{lm}x_{l}D_mv\overline{D_jv}-4g_{lm}x_{l}\partial_{m}g_{jk}D_kv\overline{D_jv}+4\partial_{j}g_{lm}x_{l}g_{jk}D_kv\overline{D_mv}\Big]\,\dd x\Bigg\}\nonumber\\
=:&\sum_{j=1}^9 Z_j.\label{symmetry part and time derivative}
\end{align}

We estimate these terms one by one. Since $A$ is time-independent
\begin{align}\label{term1}
  Z_1=0,\,\, Z_2=0.
\end{align}


For $Z_3$ and $Z_4$, integration by parts gives
\begin{align}\label{term4}
Z_3
=&16\gamma^{3}(a^{2}+b^{2})\int_{\mathbb R^d}\Big(2|Gx|^{2}|v|^{2}+g_{kj}x_{j}\partial_{k}g_{lm}x_{l}x_{m}|v|^{2}\Big)\,\dd x\nonumber\\
\geq&
32\gamma^{3}(a^{2}+b^{2})\int_{\mathbb R^d}|Gx|^{2}|v|^{2}\,\dd x
-16\gamma^{3}(a^{2}+b^{2})c_{d}\sup_{x\in\mathbb R^d}(|x|^{3}|\nabla G|)\Lambda\int_{\mathbb{R}^{d}}|v|^{2}\,\dd x,
\end{align}
and
\begin{align}
Z_4
=&-2\gamma(a^{2}+b^{2})\int_{\mathbb R^d}\Big(\partial_{l}g_{lm}\partial_{m}\partial_{k}g_{kj}x_{j}+g_{lm}\partial_{l}\partial_{m}\partial_{k}g_{kj}x_{j}+\partial_{l}g_{lm}\partial_{k}g_{kj}\delta_{jm}\nonumber\\
&\qquad\qquad\qquad
+g_{lm}\partial_{m}\partial_{k}g_{kj}\delta_{jl}+g_{lm}\partial_{l}\partial_{k}g_{kj}\delta_{jm}
+\partial_{l}g_{lm}\partial_{m}g_{kk}+g_{lm}\partial_{l}\partial_{m}g_{kk}\Big)|v|^{2}\,\dd x\nonumber\\
\geq&
-2(a^{2}+b^{2})\Big(\sup_{x\in\mathbb R^d}|x||\nabla G|\|\nabla^{2}G\|_{L^{\infty}}+\sup_{x\in\mathbb R^d}|x||\nabla^{3}G|+2\|\nabla G\|^{2}_{L^{\infty}}+2\Lambda\|\nabla^{2}G\|_{L^{\infty}}\Big)\gamma\int_{\mathbb R^d}|v|^{2}\,\dd x.\label{term5}
\end{align}

Next,
\begin{align}
Z_5
=&-8\gamma(a^{2}+b^{2})\operatorname{Im}\int_{\mathbb R^d}G\overline{\nabla_{A}v}\cdot\,(Gx)^{\top}B v\,\dd x\nonumber\\
\geq&
-4(a^{2}+b^{2})\gamma\|(Gx)^{\top}B\|^{2}_{L^{\infty}}\int_{\mathbb R^d}|v|^{2}\,\dd x
-4(a^{2}+b^{2})\gamma\int_{\mathbb R^d}|G\nabla_{A}v|^{2}\,\dd x.\label{term6}
\end{align}

The term $Z_6$ is positive:
\begin{align}
    Z_6
    =&8\gamma(a^{2}+b^{2})\int_{\mathbb R^d}g_{jk}g_{mk}D_{m}v\overline{D_{j}v}\,\dd x
    =8\gamma(a^{2}+b^{2})\int_{\mathbb R^d}|G\nabla_{A}v|^{2}\,\dd x.\label{term7}
\end{align}

Finally,
\begin{equation}
\begin{aligned}
    &\gamma(a^{2}+b^{2})\int_{\mathbb R^d}4g_{jk}\partial_{k}g_{lm}x_{l}D_{m}v\overline{D_{j}v}-4g_{lm}x_{l}\partial_{m}g_{jk}D_kv\overline{D_{j}v}+4\partial_{j}g_{lm}x_{l}g_{jk}D_kv\overline{D_{m}v}\\
    \geq& -12(a^{2}+b^{2})c_{d}\gamma\sup_{x\in\mathbb R^d}\big(|x||\nabla G|\big)\int_{\mathbb R^d}|G\nabla_{A}v|^{2}\,\dd x.\label{term8}
\end{aligned}
\end{equation}

Combining \eqref{term1}--\eqref{term8}, we obtain
\begin{align}
&\int_{\mathbb R^d}\bar{v}\Big(\mathcal{S}_{t}+[\mathcal{S},\mathcal{A}]\Big)v\,\dd x\nonumber\\
\geq &
\Big(-\gamma\,a^{2}+4\gamma(a^{2}+b^{2})-12\gamma(a^{2}+b^{2})c_{d}\Lambda\sup_{x\in\mathbb R^d}\big(|x||\nabla G|\big)\Big)\int_{\mathbb R^d}|G\nabla_{A}v|^{2}\,\dd x\nonumber\\
&+32(a^{2}+b^{2})\gamma^{3}\int_{\mathbb R^d}|Gx|^{2}|v|^{2}\,\dd x\nonumber\\
&-(a^{2}+b^{2})\Big[16c_{d}\gamma^{3}\Lambda\sup_{x\in\mathbb R^d}(|x|^{3}|\nabla G|)+2\gamma\sup_{x\in\mathbb R^d}|x||\nabla G|\|\nabla^{2}G\|_{L^{\infty}}\nonumber\\
&\qquad\qquad
+2\gamma\sup_{x\in\mathbb R^d}|x||\nabla^{3}G|
+4\gamma\|\nabla G\|^{2}_{L^{\infty}}+4\gamma\Lambda\|\nabla^{2}G\|_{L^{\infty}}+4\gamma\|(Gx)^{\top}B\|^{2}_{L^\infty}\Big]\int_{\mathbb R^d}|v|^{2}\,\dd x.\label{complex formula}
\end{align}

By the assumption on $G$,
\begin{equation}
    -\gamma a^{2}+4\gamma(a^{2}+b^{2})-12\gamma(a^{2}+b^{2})c_{d}\Lambda\sup_{x\in\mathbb{R}^{d}}(|x||\nabla G|)\geq 2\gamma(a^{2}+b^{2}).
\end{equation}
Hence,
\begin{align}
    \mathcal{S}_{t}+[\mathcal{S},\mathcal{A}]\geq- {M}_{G,A},
\end{align}
with ${M}_{G,A}$ as in \eqref{MGA}. Together with the bounds $\sqrt{a^{2}+b^{2}}M_{1}$ and $\sqrt{a^{2}+b^{2}}M_{2}$ coming from \eqref{formula of sym anti}, Lemma \ref{abstract lemma1} gives the logarithmic convexity estimate \eqref{log convex}.

However, the above calculation is formally. To make the proof rigorously,  we need 
\begin{equation}\label{claim-uniform}
    \|e^{\gamma|x|^{2}}u(t)\|_{L^{2}(\mathbb{R}^{d})}\leq C,\,\forall t\in[0,1].
\end{equation} 
To verify this, we will use the approximation argument.  First, as a consequence of Lemma \ref{sub-Gaussian}, we obtain a sub-Gaussian decay $$\|e^{\gamma|x|^{2-2\delta}}u(t,x)\|_{L^2(\Bbb R^d)}\leq C.$$ However, to take the limit as $\delta\to0$, we need to prove the estimate uniformly in $\delta$. 
Replacing $\varphi$ by $\varphi_{\delta}=|x|^{2-2\delta}$ with $\delta\in(0,1)$. Repeating the proof in calculation the lower bound of $\mathcal{S}_{t}+[\mathcal{S},\mathcal{A}]$, one checks that the previous calculations remain valid. Then passing to the limit as $\delta\to0$. We refer to \cite{Cazenave} for this standard argument.

We now prove \eqref{t and 1-t}. Using \eqref{second derivative of H}, we have
\begin{align}
    \partial_{t}^{2}H(t)
    =&2\partial_{t}\operatorname{Re}(\partial_{t}v-\mathcal{S}v-\mathcal{A}v,v)+2(\mathcal{S}_{t}v+[\mathcal{S},\mathcal{A}]v,v)\nonumber\\
    &+\|\partial_{t}v-\mathcal{A}v+\mathcal{S}v\|_{L^2}^{2}-\|\partial_{t}v-\mathcal{A}v-\mathcal{S}v\|_{L^2}^{2}\nonumber\\
    \geq&
    2\partial_{t}\operatorname{Re}(\partial_{t}v-\mathcal{S}v-\mathcal{A}v,v)+2(\mathcal{S}_{t}v+[\mathcal{S},\mathcal{A}]v,v)-\|\partial_{t}v-\mathcal{A}v-\mathcal{S}v\|_{L^2}^{2}\nonumber\\
    =:&\mathfrak{R}_1+\mathfrak{R}_2+\mathfrak{R}_3.
\end{align}
Multiplying by $t(1-t)$ and integrating by parts in time yields
\begin{align}
    \int_{0}^{1}t(1-t)\frac{d^{2}}{dt^{2}}H(t)\,\dd t
    =H(1)+H(0)-2\int_{0}^{1}H(t)\,\dd t
    \leq 2\sup_{0\leq t\leq1}\|v(\cdot,t)\|_{L^{2}}^{2}.
\end{align}

For $\mathfrak{R}_{1}$, integration by parts gives
\begin{align}
    \mathfrak{R}_{1}
    =&2\int_{0}^{1}\int_{\mathbb R^d}t(1-t)\partial_{t}\operatorname{Re}(\partial_{t}v-\mathcal{S}v-\mathcal{A}v)\bar{v}\,\dd x\dd t\nonumber\\
    =&-2\int_{0}^{1}\int_{\mathbb R^d}(1-2t)\operatorname{Re}(\partial_{t}v-\mathcal{S}v-\mathcal{A}v)\bar{v}\,\dd x\dd t\nonumber\\
    \geq&
    -\Big((a^{2}+b^{2})M_{1}^{2}+1\Big)\sup_{0\leq t\leq1}\|v(\cdot,t)\|_{L^{2}}^{2}
    -(a^{2}+b^{2})\sup_{0\leq t\leq1}\|e^{\gamma|\cdot|^{2}}F\|_{L^2}^{2}.
\end{align}

For $\mathfrak{R}_{2}$,
\begin{align}
   \mathfrak{R}_{2}
   =&2\int_{0}^{1}\int_{\mathbb R^d}t(1-t)\bar{v}(\mathcal{S}_{t}+[\mathcal{S},\mathcal{A}])v\,\dd x\dd t\nonumber\\
   \geq&
   -M_{G,A}\sup_{0\leq t\leq1}\|v(t,\cdot)\|^{2}_{L^{2}}
   +(3\gamma a^{2}+4\gamma b^{2}-12(a^{2}+b^{2})c_{d}\gamma\Lambda\varepsilon_{0})\int_{0}^{1}\int_{\mathbb R^d}t(1-t)|G\nabla_{A}v|^{2}\,\dd x\dd t\nonumber\\
   &+32\gamma^{3}(a^{2}+b^{2})\int_{0}^{1}\int_{\mathbb R^d}t(1-t)|Gx|^{2}|v|^{2}\,\dd x\dd t.
\end{align}

For $\mathfrak{R}_{3}$, using \eqref{formula of sym anti},
\begin{align}
    \mathfrak{R}_{3}
    =&-\int_{0}^{1}t(1-t)\|\partial_{t}v-\mathcal{S}v-\mathcal{A}v\|_{L^{2}}^{2}\,\dd t\nonumber\\
    \geq&
    -\frac{1}{6}(a^{2}+b^{2})M_{1}^{2}\sup_{0\leq t\leq1}\|v(t,\cdot)\|^{2}_{L^{2}}
    -\frac{1}{6}(a^{2}+b^{2})\sup_{0\leq t\leq1}\|e^{\gamma|\cdot|^{2}}F(\cdot,t)\|^{2}_{L^{2}}.
\end{align}

Combining the upper and lower bounds for $\partial_t^2 H$, we obtain
\begin{align}\label{last step for t and 1-t estimate}
    &(3\gamma a^{2}+4\gamma b^{2}-12(a^{2}+b^{2})c_{d}\gamma\Lambda\varepsilon_{0})\|\sqrt{t(1-t)}G\nabla_{A}v\|^{2}_{L^{2}([0,1]\times\mathbb R^d)}\nonumber\\
&+ 32\gamma^{3}(a^{2}+b^{2})\|\sqrt{t(1-t)}Gxv\|^{2}_{L^{2}([0,1]\times\mathbb R^d)}\nonumber\\
\leq&
M_{3}\sup_{0\leq t\leq1}\|v(\cdot,t)\|^{2}_{L^{2}}+M_{4}\sup_{0\leq t\leq1}\|e^{\gamma|\cdot|^{2}}F\|^{2}_{L^{2}},
\end{align}
where
\[
M_{3}=M_{G,A}+\frac{7}{6}(a^{2}+b^{2})M_{1}^{2}+3,
\qquad
M_{4}=\frac{7}{6}(a^{2}+b^{2}).
\]

Since $v=e^{\gamma|x|^{2}}u$, we have
\begin{equation}
\nabla_{A}u=e^{-\gamma|x|^{2}}\nabla_{A}v-2\gamma xe^{-\gamma|x|^{2}}v,
\end{equation}
and therefore
\begin{equation}\label{upper bound for nabla u}
  \|\sqrt{t(1-t)}e^{\gamma|x|^2}G\nabla_{A} u\|_{L^{2}}^{2}
  \leq
  2\|\sqrt{t(1-t)}G\nabla_{A} v\|^{2}_{L_{t,x}^{2}}+2\|2\sqrt{t(1-t)}\gamma Gx v\|_{L_{t,x}^{2}}^2.
\end{equation}
Combining \eqref{upper bound for nabla u} with \eqref{last step for t and 1-t estimate} yields \eqref{t and 1-t}.
\end{proof}

As a consequence, we obtain the following version with super-quadratic exponential weights.

\begin{corollary}\label{cor-log}
Under the assumptions of Lemma \ref{log convex lemma}, let $\eta\in(1,2)$ and $\varrho>0$. If
\begin{equation}
e^{\varrho|x|^{2\eta}}u(0,x),\qquad e^{\varrho|x|^{2\eta}}u(1,x)\in L^{2}(\mathbb R^d),
\end{equation}
then for every $t\in(0,1)$,
\begin{equation}
e^{\varrho|x|^{2\eta}}u(t,x)\in L^{2}(\mathbb R^d),
\end{equation}
and
\begin{equation}
\sqrt{t(1-t)}\, e^{\varrho |x|^{2\eta}} \nabla_A u,
\qquad
\sqrt{t(1-t)}\, e^{\varrho |x|^{2\eta}} x u(t)
\in L^2\bigl([0,1], L^2(\mathbb{R}^d)\bigr).
\end{equation}
Moreover,
\begin{equation}
\int_{\mathbb{R}^d} e^{2\varrho |x|^{2\eta}} |u(t,x)|^2 \, \dd x
\le C
\left(
\int_{\mathbb{R}^d} e^{2\varrho |x|^{2\eta}} |u(0,x)|^2 \, \dd x
\right)^{1-t}
\left(
\int_{\mathbb{R}^d} e^{2\varrho |x|^{2\eta}} |u(1,x)|^2 \, \dd x
\right)^t
\end{equation}
and
\begin{equation}
\Bigl\|
\sqrt{t(1-t)}\, e^{\varrho |x|^{2\eta}} \nabla_A u
\Bigr\|_{L^2_{t,x}}^2
+
\Bigl\|
\sqrt{t(1-t)}\, e^{\varrho |x|^{2\eta}} x u
\Bigr\|_{L^2_{t,x}}^2
\le C
\Bigl(
\|e^{\varrho |x|^{2\eta}} u(0,x)\|_{L^2_x}^2
+
\|e^{\varrho |x|^{2\eta}} u(1,x)\|_{L^2_x}^2
\Bigr).
\end{equation}
\end{corollary}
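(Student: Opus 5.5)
The plan is to repeat the proof of Lemma \ref{log convex lemma} with the Gaussian weight $\gamma|x|^2$ replaced by the super-quadratic weight $\varphi(x)=\varrho|x|^{2\eta}$. To keep all quantities finite, I will first work with a truncated and mollified version, $\varphi_R=\varrho\,(\min\{|x|,R\})^{2\eta}*\eta_\epsilon$, or alternatively with a convex radial profile that agrees with $\varrho|x|^{2\eta}$ for $|x|\le R$ and grows quadratically for $|x|\ge R$. With this choice $v=e^{\varphi_R}u$ solves \eqref{equation of v}, and Lemma \ref{sub-Gaussian} together with the a priori Gaussian decay makes $H(t)=\|v(t)\|_{L^2}^2$ finite. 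The ingredients of Lemma \ref{abstract lemma1} are then checked as follows. The bound $|\partial_tv-(\mathcal S+\mathcal A)v|\le\sqrt{a^2+b^2}(M_1|v|+e^{\varphi}|F|)$ holds exactly as in \eqref{formula of sym anti}. Since $\partial_t\varphi=0$ and $A$ is time independent, $\mathcal S_t=0$. So the only real task is the lower bound $[\mathcal S,\mathcal A]\ge -M_0$, uniform in $R$ and $\epsilon$.

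For the commutator I plug $\varphi$ into the formula of Lemma \ref{technical-lemma} and sort the terms into good and bad ones.

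\emph{Good terms.} The term $2G\nabla\varphi\cdot\nabla(G\nabla\varphi\cdot\nabla\varphi)$ behaves like $c\,\varrho^3|x|^{6\eta-4}$. The Hessian term $4g_{jk}g_{lm}\partial_k\partial_l\varphi D_mv\overline{D_jv}$ is nonnegative up to an $O(\varepsilon_0)$ error, because $D^2\varphi\ge0$ for a convex radial profile (and $\eta>1$).

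\emph{Bad terms.}
\begin{itemize}
\item The $\nabla G$ corrections to the cubic term are of order $\varrho^3|x|^{6\eta-3}|\nabla G|\lesssim\varepsilon_0\varrho^3|x|^{6\eta-6}$, using $\langle x\rangle^3|\nabla G|\le\varepsilon_0$ from \eqref{wyy1}. They are absorbed by the good cubic term for large $|x|$ and are bounded for small $|x|$.
\item $\mathcal Q^2\varphi$ is of order $|x|^{2\eta-4}+\varepsilon_0$-terms. Since $\eta>1$, this is locally integrable as a potential and bounded at infinity after using the decay of $\nabla^kG$, and it is dominated by the cubic term where $|x|$ is large.
\item The first-order terms involving $\partial g\,\partial\varphi$ are bounded by $\varepsilon_0|x|^{2\eta-1}\langle x\rangle^{-3}|\nabla_Av|^2$, which is absorbed by the Hessian term $\sim|x|^{2\eta-2}|\nabla_Av|^2$.
\item The magnetic term $-4\operatorname{Im}g_{jk}g_{lm}\partial_l\varphi B_{mk}v\overline{D_jv}=-8\varrho\eta|x|^{2\eta-2}\operatorname{Im}((Gx)^\top B)v\overline{\nabla_Av}$ is handled by Cauchy--Schwarz. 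It is bounded by $\delta|x|^{2\eta-2}|G\nabla_Av|^2+C_\delta|x|^{2\eta-2}M_A|v|^2$, and the second piece is absorbed by the cubic term $\varrho^3|x|^{6\eta-4}|v|^2$ outside a compact set.
\end{itemize}
Altogether this gives $[\mathcal S,\mathcal A]\ge -M_0$ with $M_0$ independent of $R$. It also gives the extra positivity $c\int|x|^{2\eta-2}|\nabla_Av|^2+c\int|x|^{6\eta-4}|v|^2$.

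With these bounds, Lemma \ref{abstract lemma1} yields the log-convex inequality for the truncated weights, and letting $R\to\infty$ by monotone convergence gives the first displayed estimate. For the space-time bounds I repeat the $t(1-t)$ integration argument leading to \eqref{t and 1-t}. The retained positive terms control $\sqrt{t(1-t)}|\nabla_Av|$ and $\sqrt{t(1-t)}|x|^{3\eta-2}|v|\ge\sqrt{t(1-t)}|x||v|$ for $|x|\ge1$. I then return to $u$ through $\nabla_Au=e^{-\varphi}(\nabla_Av-\nabla\varphi\,v)$, where $|\nabla\varphi|\lesssim|x|^{2\eta-1}\le|x|^{3\eta-2}$, and note that the $|x|\le1$ region is trivially controlled by $\sup_t H(t)$.

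The main obstacle is making the commutator lower bound uniform in the truncation parameter $R$. For the truncated profile the cubic good term and the Hessian degenerate in the region $|x|>R$, so the magnetic and $\nabla G$ errors there must be controlled by the weaker quadratic-type positivity. My first attempt is to use the quadratic continuation: beyond $R$ the weight behaves like $\gamma_R|x|^2$, and the estimate of Lemma \ref{log convex lemma} applies there. The concern is that $\gamma_R\sim R^{2\eta-2}$ grows with $R$, so the constant $M_{G,A}$ in \eqref{MGA}, which contains $\gamma^3\sup|x|^3|\nabla G|$, would blow up. To fix this I would replace the global bounds in the estimates for $Z_3$, $Z_4$, $Z_5$ by pointwise absorption against the cubic term $\gamma_R^3|Gx|^2$, which still dominates on $|x|>R$.
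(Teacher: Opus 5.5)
Your route is genuinely different from the paper's, and once the points in the second paragraph are repaired it works.

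\textbf{Comparison with the paper.} The paper does not redo any commutator computation. It multiplies the Gaussian estimate \eqref{log convex} (and likewise \eqref{t and 1-t}) by $e^{-(2\gamma)^q/(q\varrho^q)}(2\gamma)^{(q-2)/q}$ and integrates over $\gamma\in(\gamma_0,\infty)$. It then uses H\"older in $\gamma$ to pull out $H(0)^{1-t}H(1)^t$, and evaluates the $\gamma$-integral by Laplace's method as $\approx e^{\varrho^\eta|x|^{2\eta}/\eta}$ with $\frac1q+\frac1\eta=1$.

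This is short, but it tacitly needs the constant in \eqref{log convex} to be uniform in $\gamma$, or at least integrable against the $\gamma$-weight. Here $M_{G,A}$ in \eqref{MGA} grows like $\gamma^3\sup|x|^3|\nabla G|$ and like $\gamma\|(Gx)^{\top}B\|_{L^\infty}^2$. A factor $e^{c\gamma^3}$ is not compensated by $e^{-c'\gamma^q}$ when $q<3$, i.e. when $\eta>3/2$.

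Your direct log-convexity with a convex super-quadratic weight avoids this issue:
\begin{itemize}
\item The $\nabla G$ corrections are pointwise at most $C\varepsilon_0$ times the cubic term, because $|x||\nabla G|\le\varepsilon_0$ and $D^2\varphi\ge(\partial_r\varphi/r)I$.
\item The magnetic and $\mathcal Q^2\varphi$ errors are bounded, and are dominated by the cubic term outside a fixed ball.
\end{itemize}
So all constants depend only on the data and not on the truncation. The retained positive terms then give the space-time bounds directly, even with the stronger weights $|x|^{2\eta-2}$ and $|x|^{6\eta-4}$. The same pointwise absorption would also remove the $\gamma^3$ term from $M_{G,A}$.

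The price is that you need an admissible truncation and must redo every error term. Your last paragraph correctly sees that the global bounds used for $Z_3,Z_4,Z_5$ have to be replaced by pointwise absorption.

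\textbf{Repairs needed.}
\begin{enumerate}
\item[(i)] Drop the mollified $\varrho(\min\{|x|,R\})^{2\eta}$. Its radial second derivative is $\approx-\varrho R^{2\eta-1}/\epsilon$ on the shell $|x|\approx R$, which makes both the Hessian term and the cubic term large and negative there, so no uniform lower bound survives.
\item[(ii)] A continuation that is exactly $\varrho|x|^{2\eta}$ on $B_R$ and exactly quadratic outside is only $C^1$, since its Hessian jumps across $|x|=R$. Then $\mathcal Q^2\varphi_R$, which involves four derivatives of $\varphi_R$, is a singular distribution on that sphere. Glue smoothly instead: take $\varphi_R=\varrho\,h_R(|x|^2)$ with $h_R$ smooth, convex and nondecreasing, such that
  \begin{itemize}
  \item $h_R(s)=s^\eta$ for $1\le s\le R^2$, and $h_R$ is affine for $s\ge 4R^2$;
  \item $h_R(s)\le s^\eta$ everywhere;
  \item $|h_R^{(k)}|\lesssim R^{2(\eta-k)}$ on $[R^2,4R^2]$.
  \end{itemize}
  This gives $D^2\varphi_R\ge 2\varrho h_R'(|x|^2)I$, and $|\nabla^k\varphi_R|\lesssim\varrho R^{2\eta-k}$ on the transition region.
\item[(iii)] A smooth convex modification for $s\le1$ is also required. $\Delta^2|x|^{2\eta}\sim|x|^{2\eta-4}$ is unbounded at $0$, and local integrability does not give $\int\mathcal Q^2\varphi|v|^2\le M_0\|v\|^2$. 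Do not use $\varrho\langle x\rangle^{2\eta}$, since it is not comparable to $\varrho|x|^{2\eta}$ at infinity. The resulting bound $D^2\varphi\ge c\varrho I$ on $B_1$ is what controls $\nabla_Au$ near the origin; $\sup_tH(t)$ alone does not, contrary to your last step.
\item[(iv)] Finiteness of $H_R(t)$ must come from Lemma \ref{log convex lemma} at level $\gamma\sim\varrho R^{2\eta-2}$. This is legitimate, since $e^{\gamma|x|^2}u(0)$ and $e^{\gamma|x|^2}u(1)$ lie in $L^2$ for every $\gamma$. It cannot come from Lemma \ref{sub-Gaussian}, whose rate $\frac{\gamma a}{a+4\gamma\Lambda(a^2+b^2)t}$ stays below $\frac{a}{4\Lambda(a^2+b^2)t}$.
\item[(v)] Lemma \ref{abstract lemma1} now needs $\sup_t\|e^{\varphi_R}F\|_{L^2}/\|e^{\varphi_R}u\|_{L^2}$ to be bounded uniformly in $R$. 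The Gaussian hypothesis on $F$ in Lemma \ref{log convex lemma} does not give this, so you must assume the analogous bound with $e^{\varrho|x|^{2\eta}}$. This is vacuous when $F=0$.
\end{enumerate}
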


\begin{proof}
Multiply \eqref{log convex} by
\[
e^{-\frac{(2\gamma)^q}{q\varrho^q}} (2\gamma)^{\frac{q-2}{q}}
\]
and integrate with respect to $\gamma$ over $(\gamma_0,+\infty)$, for some fixed $\gamma_0>0$. This yields
\begin{align}
    &\int_{\mathbb{R}^d} \int_{\gamma_0}^{\infty}
    e^{2\gamma |x|^2 - \frac{(2\gamma)^q}{q\varrho^q}}
    (2\gamma)^{\frac{q-2}{q}}
    \, \dd\gamma \,
    |u(x,t)|^2 \, \dd x
    \notag\\
    &\le
    C
    \left(
        \int_{\mathbb{R}^d} \int_{\gamma_0}^{\infty}
        e^{2\gamma |x|^2 - \frac{(2\gamma)^q}{q\varrho^q}}
        (2\gamma)^{\frac{q-2}{q}}
        \, \dd\gamma \,
        |u(x,0)|^2 \, \dd x
    \right)^{1-t}
    \notag\\
    &\qquad \times
    \left(
        \int_{\mathbb{R}^d} \int_{\gamma_0}^{\infty}
        e^{2\gamma |x|^2 - \frac{(2\gamma)^q}{q\varrho^q}}
        (2\gamma)^{\frac{q-2}{q}}
        \, \dd\gamma \,
        |u(x,1)|^2 \, \dd x
    \right)^t.
    \label{high order exponential decay}
\end{align}

For any $\varrho$ such that
\begin{equation}
    \frac{\varrho^\eta}{\eta}
    >
    \varrho_0
    :=
    \frac{1}{\eta}
    \left(
        4\gamma_0 \frac{2}{q-2}
    \right)^\eta,
\end{equation}
we have
\begin{equation}\label{approx}
    \int_{\gamma_0}^{\infty}
    e^{2\gamma |x|^2 - \frac{(2\gamma)^q}{q\varrho^q}}
    (2\gamma)^{\frac{q-2}{q}}
    \, \dd\gamma
    \approx_{\varrho,\eta}
    e^{\frac{\varrho^\eta |x|^{2\eta}}{\eta}}.
\end{equation}
Substituting \eqref{approx} into \eqref{high order exponential decay} gives the desired conclusion.
\end{proof}

\subsection{Logarithmic convexity for special metric}

We now consider the special metric case. Under the additional structure of the metric, one can improve the uncertainty principle to the quadratic exponential regime.

We begin with the corresponding logarithmic convexity estimate.

\begin{lemma}\label{Log-convex-special}
Suppose that $u\in L^{\infty}([0,1],L^{2}(\mathbb{R}^{d}))\cap L^{2}([0,1],H^{1}(\mathbb{R}^{d}))$ is a solution to 
\begin{equation}
\partial_{t}u=(a+ib)\Big(\operatorname{div}_{A}(G\nabla_{A}u)+V(x,t)u+F(x,t)\Big)
\end{equation}
in $\mathbb{R}^{d}\times[0,1]$, where $a>0$ and $b\in\mathbb{R}$. Assume that 
$B$ satisfies \eqref{wyy2}.

Let $\gamma>0$, and assume that
\begin{equation*}
\sup_{t\in[0,1]}\|V\|_{L^{\infty}}:=M_{1}<\infty,\qquad
\sup_{t\in[0,1]}\frac{\|e^{\gamma|\cdot|^{2}}F(\cdot,t)\|_{L^{2}}}{\|u(\cdot,t)\|_{L^{2}}}:=M_{2}<\infty.
\end{equation*}
Assume moreover that
\begin{align}
         &(a^{2}+b^{2})c_{d}\Big(16\gamma^{3}\Lambda\sup_{x'\in\mathbb{R}^{d-1}}(|x'|^{3}|\nabla\tilde{G}(x')|)+2\sup_{x'\in\mathbb{R}^{d-1}}|x'||\nabla \tilde{G}(x')|\|\nabla^{2}G\|_{L^{\infty}}\nonumber\\
&\qquad\qquad
+2\sup_{x'\in\mathbb{R}^{d-1}}|x'||\nabla^{3}\tilde{G}(x')|
    +4\|\nabla G\|^{2}_{L^{\infty}}+4\Lambda\|\nabla^{2}G\|_{L^{\infty}}+4\gamma\|(Gx)^{\top}B\|^{2}_{L^\infty}\Big):=\widetilde{M}_{G,A}<\infty.
\end{align}
If
\begin{align}
\|e^{\gamma|\cdot|^{2}}u(\cdot,0)\|_{L^{2}}+\|e^{\gamma|\cdot|^{2}}u(\cdot,1)\|_{L^{2}}<\infty,
\end{align}
and
\[
H(t)=\|e^{\gamma|\cdot|^{2}}u(t,\cdot)\|_{L^{2}},
\]
then $H(t)$ is finite and logarithmically convex on $[0,1]$. Moreover, there exists a constant $N=N(\gamma,a,b)$ such that
\begin{equation}
    H(t)\leq e^{N[\widetilde{M}_{G,A}+\sqrt{a^{2}+b^{2}}(M_{1}+M_{2})+(a^{2}+b^{2})(M_{1}^{2}+M_{2}^{2})]}\|e^{\gamma|\cdot|^{2}}u(\cdot,0)\|_{L^{2}}^{1-t}\|e^{\gamma|\cdot|^{2}}u(\cdot,1)\|_{L^{2}}^{t}.\label{log convex-special}
\end{equation}
\end{lemma}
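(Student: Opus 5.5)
The proof repeats that of Lemma \ref{log convex lemma} with $v=e^{\gamma|x|^2}u$. The only change is that the metric-dependent error terms are now estimated through the structure $G=I+\widetilde G_1$ of Hypothesis \ref{assm2}. From \eqref{equation of v} we get the pointwise bound
\[
|\partial_t v-(\mathcal S+\mathcal A)v|\le\sqrt{a^2+b^2}\big(M_1|v|+e^{\gamma|x|^2}|F|\big),
\]
so Lemma \ref{abstract lemma1} applies with $W=\sqrt{a^2+b^2}e^{\gamma|x|^2}F$. The constant $M_2$ there is controlled by the hypothesis on $F$. It remains to verify the lower bound \eqref{lower bound} with $M_0=\widetilde M_{G,A}$, and to justify finiteness of $H(t)$.

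For \eqref{lower bound}, take $\varphi=\gamma|x|^2$ in Lemma \ref{technical-lemma}; since $\partial_t\varphi=0$ and $A$ is time independent, $Z_1=Z_2=0$. We decompose into $Z_3,\dots,Z_9$ as in \eqref{symmetry part and time derivative}. The key observation is that every derivative of $G$ is a derivative of $\widetilde G_1$. These vanish unless all matrix indices lie in $\{2,\dots,d\}$, and they depend only on $x'$. Hence in each contraction where $\partial g$ meets a factor $x_l$ (or $x_j$, $x_m$), the index is forced into $\{2,\dots,d\}$, and $x_l$ is effectively a component of $x'$.

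\begin{itemize}
\item In $Z_3$, the error term $g_{kj}x_j\partial_k g_{lm}x_lx_m$ is therefore bounded by $c_d\Lambda|x'|^3|\nabla\tilde G(x')|$. The main term $32\gamma^3(a^2+b^2)\int|Gx|^2|v|^2$ stays nonnegative.
\item In $Z_4$, the terms with $x_j\partial^3 g$ or $x_j\,\partial g\,\partial^2 g$ are bounded by $\sup|x'||\nabla^3\tilde G|$ and $\sup|x'||\nabla\tilde G|\,\|\nabla^2G\|_{L^\infty}$. The remaining terms are bounded by $\|\nabla G\|^2_{L^\infty}$ and $\Lambda\|\nabla^2G\|_{L^\infty}$, as before.
\item $Z_5$ is absorbed by Cauchy--Schwarz exactly as in \eqref{term6}, giving $-4\gamma\|(Gx)^\top B\|^2_{L^\infty}\int|v|^2-4\gamma\int|G\nabla_Av|^2$.
\item $Z_6=8\gamma(a^2+b^2)\int|G\nabla_Av|^2$ is positive.
\item The terms $Z_7$--$Z_9$ are bounded below by $-12c_d\gamma(a^2+b^2)\sup|x'||\nabla\tilde G|\int|G\nabla_Av|^2$. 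This is absorbed into the positive gradient term, because $\sup|x'||\nabla\tilde G|\le\varepsilon_0$ is small.
\end{itemize}

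Collecting these gives $\mathcal S_t+[\mathcal S,\mathcal A]\ge-\widetilde M_{G,A}$. Lemma \ref{abstract lemma1} then yields log-convexity and \eqref{log convex-special}, provided $H(t)<\infty$ on $[0,1]$, i.e. \eqref{boundedness of H}.

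For that, I would argue as in the proof of Lemma \ref{log convex lemma}. Lemma \ref{sub-Gaussian} gives finiteness for truncated or sub-Gaussian weights, for instance $\varphi_\delta=\gamma|x|^{2-2\delta}$ or the truncated mollified weight $\min\{|x|,R\}^2*\eta_\varepsilon$. The lower bound above is rerun for that weight with constants uniform in $\delta$ (resp. $R$), and then one passes to the limit by monotone convergence.

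The main obstacle is this uniformity. The derivative-of-$G$ error terms for a modified weight must still be shown to involve only $x'$, with bounds independent of $\delta$. For $|x|^{2-2\delta}$ one has $\nabla\varphi_\delta=(2-2\delta)\gamma|x|^{-2\delta}x$, and the extra Hessian terms scale like $|x|^{-2\delta}$. The contraction argument therefore still puts the $\partial g$ index on $x'$, with $|x|^{-2\delta}\le 1$ for $|x|\ge1$ and a bounded contribution near the origin. I would verify this contraction carefully term by term and otherwise cite the standard limiting procedure.
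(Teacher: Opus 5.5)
Your proposal is correct and follows the paper's proof essentially step for step. It uses the same $Z_1$--$Z_9$ decomposition with $\varphi=\gamma|x|^2$, the same observation that any derivative of $G=I+\widetilde G_1$ forces all indices into $\{2,\dots,d\}$ (so only $x'$ appears next to $\nabla\tilde G,\nabla^2\tilde G,\nabla^3\tilde G$), and the same appeal to Lemma~\ref{abstract lemma1} together with the approximation argument of Lemma~\ref{log convex lemma} for the finiteness of $H$.

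One caution about that last step. The truncated weight $\min\{|x|,R\}^2*\eta_\varepsilon$ does not give a lower bound uniform in $R$, because its radial second derivative is of order $-R/\varepsilon$ near $|x|=R$. The weight $\gamma|x|^{2-2\delta}$, which the paper also uses, is singular at the origin: for instance $\mathcal{Q}^2\varphi_\delta$ is of size $|x|^{-2-2\delta}$ there, so the near-origin contributions are not bounded as you assert. A smoothed weight such as $\gamma\langle x\rangle^{2-2\delta}$ avoids both problems and lets your contraction argument go through uniformly in $\delta$.
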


\begin{proof}
Let $v=e^{\gamma|x|^{2}}u$. From \eqref{equation of v}, we have
\begin{equation*}
    |\partial_{t}v-(\mathcal{S}+\mathcal{A})v|\leq\sqrt{a^{2}+b^{2}}\Big(M_{1}v+e^{\varphi}|F|\Big).
\end{equation*}
As in the proof of Lemma \ref{log convex lemma}, we apply Lemma \ref{abstract lemma1} with
\[
W=\sqrt{a^{2}+b^{2}}e^{\varphi}F.
\]
A direct computation gives
\begin{align}
    &\int_{\mathbb{R}^{d}}\bar{v}\Big(\mathcal{S}_{t}+[\mathcal{S},\mathcal{A}] \Big)v\,\dd x\nonumber\\
    =&\int_{\mathbb{R}^{d}}2a\operatorname{Im}(G\partial_{t}A\nabla_{A}v)\bar{v}-4b\gamma Gx\cdot\partial_{t}A|v|^{2}\,\dd x\nonumber\\
    &+(a^{2}+b^{2})\Bigg\{16\gamma^{3}\int_{\mathbb{R}^{d}}G x\cdot\nabla(Gx\cdot x)|v|^{2}\,\dd x-\int_{\mathbb{R}^{d}}\mathcal{Q}^{2}(\gamma|x|^{2})|v|^{2}\,\dd x\nonumber\\
    &\qquad\qquad\qquad
    -8\gamma\operatorname{Im}\int_{\mathbb{R}^{d}}g_{jk}g_{lm}x_{l}B_{mk}v\overline{D_{j}v}\,\dd x+8\gamma\int_{\mathbb{R}^{d}}g_{jk}g_{lm}\delta_{kl}D_{m}v\overline{D_{j}v}\,\dd x\nonumber\\
    &\qquad\qquad\qquad
    +\sum_{j,k,l,m\geq2}\gamma\int_{\mathbb{R}^{d}}\Big(4g_{jk}\partial_{k}g_{lm}x_{l}D_{m}v\overline{D_{j}v}-4g_{lm}x_{l}\partial_{m}g_{jk}D_{k}v\overline{D_{j}v}\\&\qquad\qquad\qquad+4\partial_{j}g_{lm}x_{l}g_{jk}D_{k}v\overline{D_{m}v}\Big)\,\dd x\Bigg\}\nonumber\\
    =&\sum_{j=1}^{9}Z_{j}.
\end{align}

Since $A$ is time-independent, we have
\begin{equation}
    Z_1=Z_2=0.
\end{equation}

For $Z_{3}$ and $Z_{4}$, Leibniz's rule gives
\begin{align}
Z_{3}
=&16(a^{2}+b^{2})\gamma^{3}\int_{\mathbb{R}^{d}}Gx\cdot\nabla(Gx\cdot x)|v|^{2}\,\dd x\\
=&16(a^{2}+b^{2})\gamma^{3}\int_{\mathbb{R}^{d}}\Big(|Gx|^{2}|v|^{2}+\sum_{k,j,l,m\geq2}g_{kj}x_{j}\partial_{k}g_{lm}x_{l}x_{m}|v|^{2}\Big)\,\dd x\nonumber\\
\geq&
16(a^{2}+b^{2})\gamma^{3}\int_{\mathbb{R}^{d}}|Gx|^{2}|v|^{2}\,\dd x-16(a^{2}+b^{2})c_{d}\gamma^{3}\sup_{x^{\prime}\in\mathbb{R}^{d-1}}(|x'|^{3}|\nabla\tilde{G}(x')|)\int_{\mathbb{R}^{d}}\Lambda|v|^{2}\,\dd x,
\end{align}
and
\begin{align}
Z_{4}
=&-(a^{2}+b^{2})\int_{\mathbb{R}^{d}}\mathcal{Q}^{2}(\gamma|x|^{2})|v|^{2}\,\dd x\nonumber\\
=&-2(a^{2}+b^{2})\gamma\int_{\mathbb{R}^{d}}\Big(\sum_{j,k,l,m\geq2}\Big[\partial_{l}g_{lm}\partial_{m}\partial_{k}g_{kj}x_{j}+g_{lm}\partial_{l}\partial_{m}\partial_{k}g_{kj}x_{j}\Big]+\partial_{l}g_{lm}\partial_{k}g_{kj}\delta_{jm}\nonumber\\
&\qquad\qquad\qquad
+g_{lm}\partial_{m}\partial_{k}g_{kj}\delta_{jl}
    +g_{lm}\partial_{l}\partial_{k}g_{kj}\delta_{jm}
    +\partial_{l}g_{lm}\partial_{m}g_{kk}+g_{lm}\partial_{l}\partial_{m}g_{kk}\Big)|v|^{2}\,\dd x\nonumber\\
\geq&
-2c_{d}(a^{2}+b^{2})\gamma\Big(\sup_{x'\in\mathbb{R}^{d-1}}|x'||\nabla \tilde{G}(x')|\|\nabla^{2}G\|_{L^{\infty}}+\sup_{x'\in\mathbb{R}^{d-1}}|x'||\nabla^{3}\tilde{G}(x')|+2\|\nabla G\|^{2}_{L^{\infty}}\nonumber\\
&\qquad\qquad\qquad
+2\Lambda\|\nabla^{2}G\|_{L^{\infty}}\Big)\int_{\mathbb{R}^{d}}|v|^{2}\,\dd x.
\end{align}

For $Z_{5}$, Cauchy--Schwarz yields
\begin{align}
    Z_{5}
    =&8(a^{2}+b^{2})\gamma\operatorname{Im}\int_{\mathbb{R}^{d}}G\overline{\nabla_{A}v}\cdot\,(Gx)^{\top}B\, v\nonumber\\
    \geq&
    -4(a^{2}+b^{2})\gamma\|(Gx)^{\top}B\|^{2}_{L^{\infty}}\int_{\mathbb{R}^{d}}|v|^{2}\,\dd x
    -4(a^{2}+b^{2})\gamma\int_{\mathbb{R}^{d}}|G\nabla_{A}v|^{2}\,\dd x.
\end{align}
Moreover,
\begin{align}
    Z_{6}
    =&8(a^{2}+b^{2})\gamma\int_{\mathbb{R}^{d}}g_{jk}g_{mk}D_{m}v\overline{D_{j}v}\,\dd x
    =8(a^{2}+b^{2})\gamma\int_{\mathbb{R}^{d}}|G\nabla_{A}v|^{2}\,\dd x.
\end{align}
Finally,
\begin{align}
  Z_{7}+Z_{8}+Z_{9}
  \geq
  -12\gamma(a^{2}+b^{2})c_{d}\sup_{x^{\prime}\in\mathbb{R}^{d-1}}\big(|x'||\nabla \tilde{G}(x')|\big)\int_{\mathbb{R}^{d}}|G\nabla_{A}v|^{2}\,\dd x.
\end{align}

Combining all terms, we obtain
\begin{align}
&\int_{\mathbb{R}^{d}}\bar{v}\Big(\mathcal{S}_{t}+[\mathcal{S},\mathcal{A}]\Big)v\,\dd x\nonumber\\
\geq &
\Big(-\gamma\,a^{2}+4\gamma(a^{2}+b^{2})-12\gamma(a^{2}+b^{2})c_{d}\sup_{x'\in\mathbb{R}^{d-1}}\big(|x'||\nabla \tilde{G}(x')|\big)\Big)\int_{\mathbb{R}^{d}}|G\nabla_{A}v|^{2}\,\dd x\nonumber\\
&-(a^{2}+b^{2})c_{d}\Big(16\gamma^{3}\Lambda\sup_{x'\in\mathbb{R}^{d-1}}(|x'|^{3}|\nabla\tilde{G}(x')|)+2\gamma\sup_{x'\in\mathbb{R}^{d-1}}|x'||\nabla \tilde{G}(x')|\|\nabla^{2}G\|_{L^{\infty}}\nonumber\\
&\qquad
+2\gamma\sup_{x'\in\mathbb{R}^{d-1}}|x'||\nabla^{3}\tilde{G}(x')|
    +4\gamma\|\nabla G\|^{2}_{L^{\infty}}+4\gamma\Lambda\|\nabla^{2}G\|_{L^{\infty}}+4\gamma\|(Gx)^{\top}B\|^{2}_{L^\infty}\Big)\int_{\mathbb{R}^{d}}|v|^{2}\,\dd x.
\end{align}

By the assumption on the metric,
\begin{align}
    -\gamma a^{2}+4\gamma(a^{2}+b^{2})-12\gamma(a^{2}+b^{2})c_{d}\sup_{x'\in\mathbb{R}^{d-1}}(|x'||\nabla\tilde{G}(x')|)\geq 2\gamma(a^{2}+b^{2}),
\end{align}
and therefore
\begin{align}
    \mathcal{S}_{t}+[\mathcal{S},\mathcal{A}]\geq- \widetilde{M}_{G,A}.
\end{align}
The conclusion now follows exactly as in Lemma \ref{log convex lemma}.
\end{proof}	

\subsection*{2.3. Parabolic regularization}

In this subsection, we introduce a parabolic regularization that guarantees Gaussian decay at intermediate times from weighted assumptions at the endpoints. This part does not rely on the special structure of the metric.

Since the magnetic potential $A$ depends on time, the operator
\[
\mathcal H_{G,A} := \operatorname{div}_{A}(G\nabla_{A}\cdot)
\]
is time-independent. Hence the problem is autonomous and is naturally formulated in terms of a one-parameter evolution family; see, for instance, \cite{P}.

Let
\begin{equation}
H:= \mathcal H_{G,A} + V_1(x).
\end{equation}
Under the regularity assumptions on $A$, $H$ generates a one-parameter evolution family $U(t)=e^{itH}$.

Therefore, the solution of
\begin{equation}
\partial_{t} u = iHu + iV_{2}(x,t)u
\end{equation}
can be written in Duhamel form as
\begin{equation}
u(t)=e^{itH}u_0+i\int_0^t e^{i(t-s)H}\big(V_2(s)u(s)\big)\,\dd s.
\end{equation}

For $0<\varepsilon < 1$, we introduce the regularized evolution family  generated by the dissipative operator
\begin{equation}
(i+\varepsilon)H,
\end{equation}

We define the regularized solution as the mild solution of
\begin{equation}
\partial_t u_{\varepsilon}
=
(i+\varepsilon)Hu_{\varepsilon}
+
iV_2(x,t)u_{\varepsilon},
\qquad
u_\varepsilon(0)=u_0,
\end{equation}
that is,
\begin{equation}\label{para duhamel}
u_{\varepsilon}(t)
=
e^{(i+\varepsilon)tH}u_0
+
i\int_0^t
e^{(i+\varepsilon)(t-s)H}
\big(V_{2}(s)u_{\varepsilon}(s)\big)\,ds.
\end{equation}
Moreover,
\begin{equation}
u_{\varepsilon}
\in
L^\infty([0,1],L^2(\mathbb R^d))
\cap
L^2([0,1],H^1(\mathbb R^d)).
\end{equation}

From \eqref{para duhamel}, we  have
\begin{equation}
u_{\varepsilon}(1)
=
e^{(i+\varepsilon)H}u_0
+
i\int_0^1
e^{(i+\varepsilon)(1-s)H}
\big(V_2(s)u_{\varepsilon}(s)\big)\,ds.
\end{equation}

Applying \eqref{inhomogeneous type} with $a=\varepsilon$, $b=1$, $F=0$, and
\begin{equation}
\frac{1}{\gamma_\varepsilon}
=
\frac{1}{\gamma}
+
4\varepsilon,
\end{equation}
we obtain
\begin{equation}
\left\|
e^{\gamma_\varepsilon |x|^2}
u_\varepsilon(0)
\right\|
\le
\left\|
e^{\gamma |x|^2}
u(0)
\right\|.
\end{equation}

Next, consider the auxiliary non-autonomous parabolic equation
\begin{equation}
\partial_{\tau} w(\tau)
=
\varepsilon \mathcal H_{G,A} w(\tau),
\qquad
w(1)=u_{\varepsilon}(1).
\end{equation}
Applying again \eqref{inhomogeneous type} to this flow yields
\begin{equation}
\left\|
e^{\gamma_\varepsilon |x|^2}
u_\varepsilon(1)
\right\|_{L^2}
\le
e^{\varepsilon \|V_1\|_{L^\infty}}
\left\|
e^{\gamma |x|^2}
u(1)
\right\|_{L^2}.
\end{equation}

Taking $\gamma=0$ and applying \eqref{inhomogeneous type} once more, we obtain
\begin{equation}
\|u_\varepsilon(t)\|_{L^2}
\le
e^{\varepsilon \|V_1\|_{L^\infty}}
\|u(t)\|_{L^2}.
\end{equation}

Let
\begin{equation}
F_{\varepsilon}(t):=iV_2(x,t)u_\varepsilon(t).
\end{equation}
Then
\begin{equation}
\|F_\varepsilon(t)\|_{L^2}
\le
e^{\varepsilon \|V_1\|_{L^\infty}}
\|V_2\|_{L^\infty}
\|u(t)\|_{L^2}.
\end{equation}
Applying \eqref{inhomogeneous type} once more and Hypothesis \ref{assum3} gives
\begin{equation}
\left\|
e^{\gamma_\varepsilon |x|^2}
F_\varepsilon(t)
\right\|_{L^2}
\le
e^{\varepsilon \|V_1\|_{L^\infty}}
\left\|
e^{\gamma |x|^2}
V_2
\right\|_{L^\infty}
\|u(t)\|_{L^2}.
\end{equation}

All the previous estimates show that the parabolic regularization is well defined in the non-autonomous setting. Replacing $u$ by $u_\varepsilon$, we obtain the estimates \eqref{log convex} and \eqref{t and 1-t} for the regularized flow. Letting $\varepsilon\to0$, we conclude the proof of logarithmic convexity for the original evolution.

\section{Proof of Theorem \ref{thm1}}

In this section, we prove that if a solution exhibits super-quadratic exponential decay at the two times $t=0$ and $t=1$, then it must vanish identically. The main ingredient is the Carleman estimate stated below, which provides the lower bound mechanism needed in the contradiction argument.

\begin{proposition}[Carleman estimate]\label{Carleman estimate}
Suppose that $d\geq 2$, and the magnetic field $B$ satisfies \eqref{wyy2}.
Then, for any $\varepsilon>0$ and any
\[
\mu\geq\max\{\mathfrak{A}_1(R),\mathfrak{A}_2(R),\mathfrak{A}_3(R)\},
\]
where
\begin{gather}
   \mathfrak{A}_1(R):= R^{2}\Big(\frac{3\|\partial_{tt}\varphi\|r_{0}^{-2}}{32\lambda-16\Lambda\sup_{x\in\mathbb{R}^{d}}|x||\nabla G|}\Big)^{\frac{1}{3}},\\
  \mathfrak{A}_2(R):=R^{2}\Big(\frac{3r_{0}^{-2}(2c_{d}\Xi+\|(Gx)^{\top}B\|_{L^{\infty}})}{32\lambda-16\Lambda\sup_{x\in\mathbb{R}^{d}}|x||\nabla G|}\Big)^{\frac{1}{2}},\\
   \mathfrak{A}_3(R):=R^{\frac{6}{3-2^{-\ell}}}\Big(\frac{3\|\partial_{tt}\varphi\|_{L^{\infty}}r_{0}^{-2}}{32\lambda-16\Lambda\sup_{x\in\mathbb{R}^{d}}|x||\nabla G|}\Big)^{\frac{1}{3-2^{-\ell}}},
\end{gather}
the following inequality holds 
\begin{align}\label{carleman-lemma}
       &  \frac{\mu}{R^{2}}\int_{\mathbb{R}^{d+1}}|\nabla_{A}f|^{2}\,\dd x\,\dd t+\frac{\mu^{3}}{R^{6}}\int_{\mathbb{R}^{d+1}}|xf|^{2}\,\dd x\,\dd t\\
\leq&\int_{\mathbb{R}^{d+1}}\Big|e^{\mu|\frac{x}{R}|^{2}+\mu^{2^{-\ell}}\varphi(t)}(\partial_{t}-i\operatorname{div}_{A}(G\nabla_{A}\cdot))\big(e^{-\mu|\frac{x}{R}|^{2}-\mu^{2^{-\ell}}\varphi(t)}f\big)\Big|^{2}\,\dd x\, \dd t
\end{align}
for every $f\in C^\infty_c(\mathbb{R}^{d}\backslash B_{r_0}\times[0,1])$.
\end{proposition}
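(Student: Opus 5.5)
Set $\phi(x,t)=\mu|x/R|^{2}+\mu^{2^{-\ell}}\varphi(t)$ and $f=e^{\phi}g$, so that the right-hand side of \eqref{carleman-lemma} equals $\|e^{\phi}(\partial_t-i\operatorname{div}_A(G\nabla_A\cdot))e^{-\phi}f\|_{L^2_{t,x}}^2$. We follow the standard Escauriaza--Kenig--Ponce--Vega scheme. Conjugating the operator gives $e^{\phi}(\partial_t-i\operatorname{div}_A(G\nabla_A))e^{-\phi}=\partial_t-\mathcal{S}-\mathcal{A}$. Here $\mathcal{S}$ and $\mathcal{A}$ are the symmetric and antisymmetric operators of Lemma \ref{technical-lemma} with $a=0$, $b=1$ and weight $\phi$, up to the sign convention of the time derivative. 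Explicitly, $\mathcal{S}=-i\big(\operatorname{div}(G\nabla\phi)+2G\nabla\phi\cdot\nabla_A\big)+\partial_t\phi$ and $\mathcal{A}=i\big(G\nabla\phi\cdot\nabla\phi+\operatorname{div}_A(G\nabla_A)\big)$, where the $\partial_t$ term is absorbed into the antisymmetric part. Expanding the square, we obtain
\[
\|(\partial_t-\mathcal{S}-\mathcal{A})f\|^2\ge\int_{\mathbb{R}^{d+1}}\big(\mathcal{S}_t f+[\mathcal{S},\mathcal{A}]f\big)\bar f\,\dd x\dd t .
\]
This holds because $f$ has compact support in $(\mathbb{R}^d\setminus B_{r_0})\times[0,1]$, so there are no boundary terms in $t$. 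The whole task is therefore to bound the quadratic form $\langle(\mathcal{S}_t+[\mathcal{S},\mathcal{A}])f,f\rangle$ from below by the left-hand side of \eqref{carleman-lemma}.

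For this we plug $\phi$ into the commutator formula \eqref{formula of commutator} with $a=0$, $b=1$ and time-independent $A$. Using $\nabla\phi=2\mu x/R^2$ and $\nabla^2\phi=2\mu R^{-2}I$, we sort the terms as follows.

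\textbf{(i) Main gradient term.} The term $4\int g_{jk}g_{lm}\partial_k\partial_l\phi\,D_mf\overline{D_jf}=\frac{8\mu}{R^2}\int|G\nabla_Af|^2$ is bounded below by $\frac{8\mu\lambda^2}{R^2}\int|\nabla_Af|^2$.

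\textbf{(ii) Main weight term.} The term $2\int G\nabla\phi\cdot\nabla(G\nabla\phi\cdot\nabla\phi)|f|^2=\frac{16\mu^3}{R^6}\int\big(2|Gx|^2+g_{kj}x_j\partial_kg_{lm}x_lx_m\big)|f|^2$ is bounded below by $\frac{16\mu^3}{R^6}\big(2\lambda-\Lambda\sup|x||\nabla G|\big)\int|x|^2|f|^2$. This is where the denominator $32\lambda-16\Lambda\sup|x||\nabla G|$ in $\mathfrak{A}_i$ comes from.

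\textbf{(iii) First-derivative metric terms.} The three terms involving $\partial G\,\partial_l\phi$ are controlled by $c_d\frac{\mu}{R^2}\sup_x|x||\nabla G|\int|\nabla_Af|^2$. By \eqref{wyy1} this is at most a small multiple of (i) and can be absorbed. This absorption is exactly why the space-time separated weight is used: $\partial_l\phi$ carries no $\varphi(t)$ contribution.

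\textbf{(iv) Magnetic term.} By Cauchy--Schwarz,
\[
4\operatorname{Im}\int g_{jk}g_{lm}\partial_l\phi\,B_{mk}f\overline{D_jf}\ge-\frac{2\mu}{R^2}\int|G\nabla_Af|^2-\frac{8\mu}{R^2}\|(Gx)^\top B\|_{L^\infty}^2\int|f|^2,
\]
and the first piece is absorbed into (i).

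\textbf{(v) Zero-order metric term.} The term $-\int\mathcal{Q}^2\phi|f|^2$ is bounded by $\frac{\mu}{R^2}c_d\,\Xi\int|f|^2$, where $\Xi$ collects the norms of $\nabla G,\nabla^2G,\nabla^3G$ weighted by $|x|$.

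\textbf{(vi) Time terms.} The time derivative contributes $\mathcal{S}_t$, which reduces to $\mu^{2^{-\ell}}\varphi''(t)$, giving the bound $-\mu^{2^{-\ell}}\|\partial_{tt}\varphi\|_{L^\infty}\int|f|^2$. The term $2b\operatorname{Im}(G\nabla\partial_t\phi\cdot\nabla_A f)\bar f$ vanishes since $\nabla\partial_t\phi=0$.

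The zero-order error terms in (iv), (v), (vi) must then be absorbed into (ii). For this we use the support condition $|x|\ge r_0$, which gives $\int|f|^2\le r_0^{-2}\int|x|^2|f|^2$. Absorption requires three inequalities. First, $\frac{\mu^3}{R^6}(32\lambda-16\Lambda\sup|x||\nabla G|)\ge 3r_0^{-2}\mu^{2^{-\ell}}\|\partial_{tt}\varphi\|$, which is guaranteed by $\mu\ge\mathfrak{A}_3$; indeed $\mu^{3-2^{-\ell}}\gtrsim R^6$ is precisely how $\mathfrak{A}_3$ is built. Second, the analogous inequality with $\frac{\mu}{R^2}(2c_d\Xi+\|(Gx)^\top B\|)$ on the right, which is guaranteed by $\mu\ge\mathfrak{A}_2$. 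Third, a cruder version of the $\varphi''$ bound, covered by $\mathfrak{A}_1$. Each error is allotted one third of (ii), which explains the factor $3$. After these absorptions we are left with $\gtrsim\frac{\mu}{R^2}\int|\nabla_Af|^2+\frac{\mu^3}{R^6}\int|x f|^2$. The constants are normalized to $1$ as in the statement; if necessary we rescale, using $\lambda\ge1$-type normalization or fold constants into the $\mathfrak{A}_i$.

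The main obstacle is step (iii) together with the bookkeeping in (ii). We must check that every term containing $\partial G$ carries at least one factor of $|x|$ matched by a corresponding decay of $\nabla G$. Then \eqref{wyy1} makes these terms perturbative relative to the leading terms $\frac{8\mu}{R^2}|G\nabla_A f|^2$ and $\frac{32\mu^3}{R^6}|Gx|^2|f|^2$. Doing so requires rewriting each term of \eqref{formula of commutator} explicitly with $\partial_l\phi=2\mu x_l/R^2$ and verifying the smallness of $\sup|x||\nabla G|$ and $\sup|x|^3|\nabla G|$. I would first carry out this term-by-term check, and only then fix the thresholds $\mathfrak{A}_i$.
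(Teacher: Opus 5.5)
Your proposal is correct and follows the paper's proof essentially step for step. It uses the same conjugation to $\partial_t-\mathcal{S}-\mathcal{A}$ with $a=0$, $b=1$ and the same lower bound by $\int(\mathcal{S}_t+[\mathcal{S},\mathcal{A}])f\bar f$. It treats the commutator term by term exactly as the paper's $K_1$--$K_8$, absorbing the first-order metric terms precisely because $\nabla\phi$ carries no $\varphi(t)$, and it uses $|x|\ge r_0$ with the $\mu^3R^{-6}$ term split into thirds to produce $\mathfrak{A}_1,\mathfrak{A}_2,\mathfrak{A}_3$.

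Like the paper, you really obtain positive constants $\mathfrak{c}_1,\mathfrak{c}_2$ (depending on $\lambda,\Lambda,\varepsilon_0$) in front of the two left-hand terms rather than exactly $1$. Also, the weight term should read $2\lambda^2$ rather than $2\lambda$, since $|Gx|^2\ge\lambda^2|x|^2$. Neither point affects the use of the estimate in Theorem \ref{mass lower}.
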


\begin{proof}
Let
\begin{equation}
f(x,t)=e^{\mu|\frac{x}{R}|^{2}+\mu^{2^{-\ell}}\varphi(t)}h(x,t).
\end{equation}

Observing that
\begin{align*}
    &e^{\mu|\frac{x}{R}|^{2}+\mu^{2^{-\ell}}\varphi(t)}(\partial_{t}-i\operatorname{div}_{A}(G\nabla_{A}\cdot))h\\
=& e^{\mu|\frac{x}{R}|^{2}+\mu^{2^{-\ell}}\varphi(t)}(\partial_{t}-i\operatorname{div}_{A}(G\nabla_{A}\cdot))\big(e^{-\mu|\frac{x}{R}|^{2}-\mu^{2^{-\ell}}\varphi(t)}f\big)\\
=&(\partial_{t}-\mathcal{S}-\mathcal{A})f,
\end{align*}
we see that $\mathcal{S}$ and $\mathcal{A}$ are given by \eqref{symmetry part} and \eqref{antisymmetry part}, respectively, with $a=0$ and $b=1$.

Taking the $L^2$ inner product, we obtain
\begin{align}
    \big\|(\partial_{t}-\mathcal{S}-\mathcal{A})f\big\|^{2}_{L^{2}(\mathbb R^{d+1})}
    =&\|(\partial_{t}-\mathcal{A})f\|^{2}_{L^{2}(\mathbb R^{d+1})}+\|\mathcal{S}f\|^{2}_{L^{2}(\mathbb R^{d+1})}\nonumber\\
    &-2\operatorname{Re}\int_{\mathbb{R}^{d+1}}\mathcal{S}f\overline{(\partial_{t}-\mathcal{A})f}\,\dd x\dd t\\
    \geq&\int_{\mathbb{R}^{d+1}}\big(\partial_{t}\mathcal{S}+[\mathcal{S},\mathcal{A}]\big)f\bar{f}\,\dd x\dd t.
\end{align}

We now compute the last term using \eqref{S time derivative} and \eqref{formula of commutator} with $a=0$, $b=1$, and the choice
\begin{align}
\varphi(x,t)=\mu\Big|\frac{x}{R}\Big|^{2}+\mu^{2^{-\ell}}\varphi(t).
\end{align}
Recall that $\mathcal{Q}=\operatorname{div}(G\nabla\cdot)$. It holds that
\begin{align*}
    &\int_{\mathbb{R}^{d+1}}(\partial_{t}\mathcal{S}+[\mathcal{S},\mathcal{A}])f\bar{f}\,\dd x\dd t\\
    =&\frac{16\mu^{3}}{R^{6}}\int_{\mathbb{R}^{d+1}}Gx\cdot\nabla(Gx\cdot x)|f|^{2}\,\dd x\dd t
    -\int_{\mathbb{R}^{d+1}}\mathcal{Q}^{2}(\mu\frac{|x|^{2}}{R^{2}})|f|^{2}\,\dd x\dd t\\
    &-\int_{\R^{d+1}}4\operatorname{Im}g_{jk}g_{lm}\frac{2\mu x_{l}}{R^{2}}B_{mk}f\overline{D_jf}\,\dd x\dd t
    +\frac{8\mu}{R^{2}}\int_{\R^{d+1}} g_{jk}g_{lm}\delta_{kl}D_mf\overline{D_jf}\,\dd x\dd t\\
    &+\int_{\R^{d+1}}\frac{4\mu}{R^{2}}g_{jk}\partial_{k}g_{lm}x_{l}D_mf\overline{D_jf}\,\dd x\dd t
    -\int_{\R^{d+1}}\frac{4\mu}{R^{2}}g_{lm}x_{l}\partial_{m}g_{jk}D_kf\overline{D_{j}f}\,\dd x\dd t\\
    &+\int_{\R^{d+1}}\frac{4\mu}{R^{2}}\partial_{j}g_{lm}x_{l}g_{jk}D_kf\overline{D_{m}f}\,\dd x \dd t
    +\mu^{2^{-\ell}}\int_{\R^{d+1}}\varphi_{tt}|f|^{2}\,\dd x\dd t\\
    &\stackrel{\triangle}{=}\sum_{i=1}^8K_i.
\end{align*}

We now estimate these terms one by one.

For the first term, we have
\begin{align}
K_1
=&\int_{\mathbb R^{d+1}}\Big(\frac{32\mu^{3}}{R^{6}}|Gx|^{2}|f|^{2}+\frac{16\mu^{3}}{R^{6}}g_{kj}x_{j}\partial_{k}g_{lm}x_{l}x_{m}|f|^{2}\Big)\,\dd x\dd t\nonumber\\
\geq&
\Big(\frac{32\mu^{3}}{R^{6}}\lambda^{2}-\Lambda\frac{16\mu^{3}}{R^{6}}c_{d}\sup_{x\in\mathbb R^d}(|x||\nabla G|)\Big)\int_{\mathbb{R}^{d+1}}|xf|^{2}\,\dd x\dd t.
\end{align}

Similarly, for $K_2$,
\begin{align}
K_2
=&-2\frac{\mu}{R^{2}}\int_{\mathbb R^{d+1}}\Big(\partial_{l}g_{lm}\partial_{m}\partial_{k}g_{kj}x_{j}+g_{lm}\partial_{l}\partial_{m}\partial_{k}g_{kj}x_{j}+\partial_{l}g_{lm}\partial_{k}g_{kj}\delta_{jm}\nonumber\\
&\hspace{9ex}+g_{lm}\partial_{m}\partial_{k}g_{kj}\delta_{jl}
    +g_{lm}\partial_{l}\partial_{k}g_{kj}\delta_{jm}
    +\partial_{l}g_{lm}\partial_{m}g_{kk}+g_{lm}\partial_{l}\partial_{m}g_{kk}\Big)|f|^{2}\,\dd x\dd t\nonumber\\
\geq&
-2c_{d}\frac{\mu}{R^{2}}\Big(\sup_{x\in\mathbb R^{d}}\big(|x||\nabla G|\big)\|\nabla^{2}G\|_{L^{\infty}}+\sup_{x\in\mathbb R^d}|x||\nabla^{3}G|+2\|\nabla G\|^{2}_{L^{\infty}}\nonumber\\
&\hspace{12ex}+2\Lambda\|\nabla^{2}G\|_{L^{\infty}}\Big)\int_{\mathbb R^{d+1}}|f|^{2}\,\dd x\dd t\nonumber\\
\geq&
-2c_{d}\frac{\mu}{R^{2}}\Big(\sup_{x\in\mathbb R^{d}}\big(|x||\nabla G|\big)\|\nabla^{2}G\|_{L^{\infty}}+\sup_{x\in\mathbb R^d}|x||\nabla^{3}G|+2\|\nabla G\|^{2}_{L^{\infty}}\nonumber\\
&\hspace{12ex}+2\Lambda\|\nabla^{2}G\|_{L^{\infty}}\Big)r_{0}^{-2}\int_{\mathbb R^{d+1}}|xf|^{2}\,\dd x\dd t.
\end{align}

By the Cauchy--Schwarz inequality, we deduce
\begin{align}
  K_3
  \geq& -4\frac{\mu}{R^{2}}\int_{\mathbb{R}^{d+1}}|G\nabla_{A}f|^{2}\,\dd x\dd t-\frac{\mu}{16R^{2}}\|(Gx)^{\top}B\|_{L^{\infty}}\int_{\mathbb{R}^{d+1}}|f|^{2}\,\dd x\dd t\\
    \geq&-4\frac{\mu}{R^{2}}\int_{\mathbb{R}^{d+1}}|G\nabla_{A}f|^{2}\,\dd x\dd t-\frac{\mu}{16R^{2}}\|(Gx)^{\top}B\|_{L^{\infty}}r_{0}^{-2}\int_{\mathbb{R}^{d+1}}|xf|^{2}\,\dd x\dd t.\label{K3}
\end{align}

A direct computation gives
\begin{align}
K_5+K_6+K_7
\geq
-\frac{12\mu}{R^{2}}c_{d}\Lambda\sup_{x\in\mathbb R^d}(|x||\nabla G|)\int_{\mathbb{R}^{d+1}}|G\nabla_{A}f|^{2}\,\dd x\dd t. \label{K57}
\end{align}
The terms $K_4$ and $K_8$ have the correct sign and will be used to absorb the bad contributions in \eqref{K57} and \eqref{K3}.

We now examine the coefficient in front of $\int_{\R^{d+1}}|xf|^2\,\dd x\dd t$, which is 
\begin{align}
   & \frac{32\mu^{3}}{R^{6}}\lambda^{2}-\Lambda\frac{16\mu^{3}}{R^{6}}c_{d}\sup_{x\in\mathbb{R}^{d}}|x||\nabla G|\nonumber\\
   &\qquad
   -2c_{d}\frac{\mu}{R^{2}}\Big((\sup_{x\in\mathbb{R}^{d}}|x||\nabla G|)\|\nabla^{2}G\|_{L^{\infty}}+\sup_{x\in\mathbb{R}^{d}}|x||\nabla^{3}G|+2\|\nabla G\|_{L^{\infty}}+2\Lambda\|\nabla^{2}G\|_{L^{\infty}}\Big)r_{0}^{-2}\nonumber\\
   &\qquad
   -\frac{\mu}{R^{2}}\|(Gx)^{\top}B\|_{L^{\infty}}r_{0}^{-2}-\mu^{2^{-\ell}}\|\partial_{tt}\varphi\|_{L^{\infty}}r_{0}^{-2}.
\end{align}
For convenience, let
\begin{equation}\label{notation of Xi}
\Xi:=(\sup|x||\nabla G|)\|\nabla^{2}G\|_{L^{\infty}}+\sup|x||\nabla^{3}G|+2\|\nabla G\|_{L^{\infty}}+2\Lambda\|\nabla^{2}G\|_{L^{\infty}}.
\end{equation}
Since $\mu\geq\max\{\mathfrak{A}_1(R),\mathfrak{A}_2(R),\mathfrak{A}_3(R)\}$, this coefficient admits the lower bound
\begin{align}
   & \frac{1}{3}\frac{\mu^{3}}{R^{6}}\Big(32\lambda^{2}-16c_{d}\Lambda\sup_{x\in\mathbb{R}^{d}}|x||\nabla G|\Big)\nonumber\\
   &+\frac{1}{3}\frac{\mu^{3}}{R^{6}}\Big(32\lambda^{2}-16c_{d}\Lambda\sup_{x\in\mathbb{R}^{d}}|x||\nabla G|\Big)-\frac{2\mu}{R^{2}}c_{d}\Xi r_{0}^{-2}\nonumber\\
   &+\frac{1}{3}\frac{\mu^{3}}{R^{6}}\Big(32\lambda^{2}-16c_{d}\Lambda\sup_{x\in\mathbb{R}^{d}}|x||\nabla G|\Big)-\mu^{2^{-\ell}}\|\partial_{tt}\varphi\|_{L^{\infty}}r_{0}^{-2}\nonumber\\
   \geq& \frac{1}{3}\frac{\mu^{3}}{R^{6}}\Big(32\lambda^{2}-16c_{d}\Lambda\sup_{x\in\mathbb{R}^{d}}|x||\nabla G|\Big)
   :=\mathfrak{c}_{2}\frac{\mu^{3}}{R^{6}},
\end{align}
which absorbs $\frac{\mu}{R^2}r_{0}^{-2}\|(Gx)^{\top}B\|_{L^\infty}$.
At the same time, the coefficient in front of $\int_{\R^{d+1}}|\nabla_{A}f|^{2}\,\dd x\dd t$ has the lower bound
\begin{align}\label{zrh}
    \frac{8\mu}{R^{2}}-\frac{4\mu}{R^{2}}-\frac{12\mu}{R^{2}}c_{d}\Lambda\sup_{x\in\mathbb{R}^{d}}(|x||\nabla G|)
    \geq
    \big(4-12c_{d}\Lambda\varepsilon_{0}\big)\frac{\mu}{R^{2}}
    :=\mathfrak{c}_{1}\frac{\mu}{R^{2}}.
\end{align}

Therefore, we obtain the desired estimate
\begin{align}\label{carleman estimate formula1}
    &\mathfrak{c}_{1}\frac{\mu}{R^{2}}\int_{\mathbb{R}^{d+1}}|\nabla_{A}f|^{2}\,\dd x\dd t+\mathfrak{c}_{2}\frac{\mu^{3}}{R^{6}}\int_{\mathbb{R}^{d+1}}|xf|^{2}\,\dd x\dd t\\
\leq&
\int_{\mathbb{R}^{d+1}}\Big|e^{\mu|\frac{x}{R}|^{2}+\mu^{2^{-\ell}}\varphi(t)}(\partial_{t}-i\operatorname{div}_{A}(G\nabla_{A}\cdot))\big(e^{-\mu|\frac{x}{R}|^{2}-\mu^{2^{-\ell}}\varphi(t)}f\big)\Big|^{2}\,\dd x\dd t,
\end{align}
which completes the proof.
\end{proof}

Following the strategy of \cite{EKPV-CPDE}, we now prove a lower bound for nontrivial solutions.

\begin{theorem}\label{mass lower}
Suppose that
\[
u\in L^{\infty}([0,1],L^{2}(\mathbb R^d))\cap L_{\rm loc}^{2}((0,1),H^{1}(\mathbb R^d))
\]
is a solution of
\begin{equation*}
\partial_{t}u=i\operatorname{div}_{A}\big(G\nabla_{A}u\big)+Vu
\end{equation*}
with $V(t,x)\in L^{\infty}(\R^{d+1})$.

Assume, in addition, that $E_{1}$, $E_{2}$, $R_{0}$, and $\varepsilon$ are positive constants such that
\begin{align}
\int_{\frac{1}{8}}^{\frac{7}{8}}\int_{\mathbb R^d}\Big(|u|^{2}+|\nabla_{A} u|^{2}\Big)\,\dd x\dd t\leq E_{1}^{2}<\infty
\end{align}
and
\begin{align}
\int_{\frac{1}{4}}^{\frac{3}{4}}\int_{B_{R_{0}}\backslash B_{2\varepsilon}}|u|^{2}\,\dd x\dd t\geq E_{2}^{2}.
\end{align}
Then there exist constants
\[
R_{1}=R_{1}(d,\lambda,\Lambda,\|A\|_{C_{b}^{3}},\varepsilon,\|V\|_{L^{\infty}},E_{1},E_{2},R_{0}),
\]
\[
C_{0}=C_{0}(\lambda,\varepsilon),
\qquad
C=C(d,\lambda,\Lambda,\|A\|_{C_{b}^{3}},\varepsilon,\|V\|_{L^{\infty}},E_{1},E_{2},R_{0}),
\]
such that
\begin{align}\label{goal estimate}
\delta(R)=\int_{\frac{1}{8}}^{\frac{7}{8}}\int_{B_{R}\backslash B_{R-1}}|u|^{2}+|\nabla_{A} u|^{2}\,\dd x\dd t\geq Ce^{-C_{0}R^{\frac{6}{3-2^{-\ell}}}}
\end{align}
for all $R\ge R_{1}$.
\end{theorem}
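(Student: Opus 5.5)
We follow the EKPV cutoff strategy: apply Proposition \ref{Carleman estimate} to a truncated copy of $u$, and let the lower bound it produces on the region where $u$ carries mass force a lower bound on $\delta(R)$. We choose a time cutoff $\varphi(t)$ vanishing outside $[\frac18,\frac78]$ and equal to a large constant $3$ (or, in the weight, so that $\mu^{2^{-\ell}}\varphi(t)$ is large) on $[\frac14,\frac34]$, with $\varphi$ and its derivatives bounded. We pair it with a smooth spatial cutoff $\theta_R(x)$ equal to $1$ on $B_{R-1}$ and $0$ outside $B_R$, and a cutoff $\theta_\varepsilon(x)$ equal to $0$ on $B_{\varepsilon}$ and $1$ outside $B_{2\varepsilon}$, so that $r_0=\varepsilon$. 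We set $g(x,t)=\theta_R(x)\theta_\varepsilon(x)\eta(t)u(x,t)$, where $\eta\in C_c^\infty((\frac18,\frac78))$ equals $1$ on $[\frac14,\frac34]$, and take $f=e^{\mu|x/R|^2+\mu^{2^{-\ell}}\varphi(t)}g$. After mollification this $f$ is admissible in Proposition \ref{Carleman estimate}.

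Since $u$ solves the equation, we have $(\partial_t-i\operatorname{div}_A(G\nabla_A\cdot))g=Vg+\text{(commutator terms)}$. The commutator terms are supported where some cutoff is nonconstant: in time on $[\frac18,\frac14]\cup[\frac34,\frac78]$, in space on the annulus $B_R\setminus B_{R-1}$ and the small ball $B_{2\varepsilon}\setminus B_\varepsilon$. Each involves at most $|u|+|\nabla_A u|$, multiplied by $C(\Lambda,\|\nabla G\|_\infty)$ and cutoff derivatives. The potential term $\|e^{\psi}Vg\|^2\le\|V\|_\infty^2 r_0^{-2}\|e^\psi xg\|^2$ is absorbed into the $\mu^3R^{-6}\|xf\|^2$ side once $\mu^3/R^6\gg\|V\|_\infty^2\varepsilon^{-2}$, which holds in the admissible range $\mu\ge\mathfrak A_j(R)$ for $R\ge R_1$. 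We bound the left-hand side from below by restricting to $[\frac14,\frac34]\times(B_{R_0}\setminus B_{2\varepsilon})$ and taking $R>R_0+1$. There $|x|\ge2\varepsilon$ and the weight is at least $e^{\mu^{2^{-\ell}}\cdot 3}$ (with $\varphi=3$ there), so the left side is at least $c\,\mu^3R^{-6}\varepsilon^2e^{6\mu^{2^{-\ell}}}E_2^2$.

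On the right side the three error regions contribute as follows. The time-cutoff region has weight at most $e^{2\mu(R/R)^2+2\mu^{2^{-\ell}}\varphi}$ with $\varphi$ small there, so we choose $\varphi\le 1$ on $[\frac18,\frac14]\cup[\frac34,\frac78]$. This gives $\lesssim (1+\mu^{2^{-\ell+1}})e^{2\mu+2\mu^{2^{-\ell}}}E_1^2$. The inner ball region has weight $e^{2\mu(2\varepsilon/R)^2+6\mu^{2^{-\ell}}}$ and contributes $\lesssim e^{8\mu\varepsilon^2/R^2+6\mu^{2^{-\ell}}}E_1^2$. The annulus region contributes $\lesssim e^{2\mu+6\mu^{2^{-\ell}}}\delta(R)$. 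We then choose $\mu$ at the threshold $\mu\simeq C R^{\frac{6}{3-2^{-\ell}}}$, which is the $\mathfrak A_3$ scale and dominates $\mathfrak A_1,\mathfrak A_2\sim R^2$. The inner-ball term is beaten by the main term because $\mu^3R^{-6}$ grows while $8\mu\varepsilon^2/R^2$ stays controlled relative to the gain. The time-cutoff term needs $e^{4\mu^{2^{-\ell}}}$ to beat $e^{2\mu}$, and this is the delicate point.

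\textbf{Main obstacle.} The trouble is the balance between the time weight $\mu^{2^{-\ell}}\varphi$ and the spatial weight $\mu|x|^2/R^2$ on the time-cutoff region. I would fix it by including the spatial cutoff scale in the choice of where the mass is compared. Concretely, I would compare only against $|x|\le R_0$, where the spatial weight is $\le \mu R_0^2/R^2=o(1)\cdot\mu^{2^{-\ell}}$ for $R$ large. On the time-cutoff region with $|x|\le R-1$ I would use the sharper fact that the Carleman left side, restricted suitably, dominates. Failing that, I would reorganize so that the error from the time cutoff is bounded using the full Carleman inequality with $\varphi$ decreasing to $0$ at the endpoints. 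Once all non-annulus errors are absorbed, what remains is $\mu^3R^{-6}\varepsilon^2E_2^2\lesssim e^{2\mu}\delta(R)$. With $\mu\sim R^{6/(3-2^{-\ell})}$ this gives $\delta(R)\ge Ce^{-C_0R^{6/(3-2^{-\ell})}}$, which is \eqref{goal estimate}.
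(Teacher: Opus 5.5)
There is a genuine gap, and it is exactly the step you flag as the main obstacle. None of the remedies you list closes it.

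With the product cutoff $g=\theta_R\theta_\varepsilon\eta(t)u$, the commutator term $\eta'(t)\theta_R\theta_\varepsilon u$ lives on $([\frac18,\frac14]\cup[\frac34,\frac78])\times B_R$. There the hypotheses give nothing better than $E_1$, and the exponent $\mu|x/R|^2+\mu^{2^{-\ell}}\varphi(t)$ is as large as $\mu$ near $|x|=R$. So the only available bound for this error is $e^{2\mu+2\mu^{2^{-\ell}}\sup\varphi}E_1^2$. The left-hand side on the mass region $[\frac14,\frac34]\times(B_{R_0}\setminus B_{2\varepsilon})$ carries a weight of at most $e^{2\mu R_0^2/R^2+6\mu^{2^{-\ell}}}$. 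Since $R\gg R_0$ and $\mu^{1-2^{-\ell}}\to\infty$, the error bound swamps anything the left side can give, and the inequality says nothing about $\delta(R)$.

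Your three remedies do not change this:
\begin{itemize}
\item Restricting the comparison to $|x|\le R_0$ only affects the left side, not the error.
\item The left side contains $\eta u$, not $\eta' u$, and $|\eta'|/\eta$ is unbounded at the edge of $\operatorname{supp}\eta$, so the left side cannot absorb this term.
\item Letting $\varphi\to0$ at the endpoints leaves the spatial factor $e^{2\mu|x/R|^2}$ untouched.
\end{itemize}
A smaller point: $\varphi\le1$ on all of $[\frac18,\frac14]$ contradicts $\varphi(\frac14)=3$. At best you can arrange it on $\operatorname{supp}\eta'$.

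The missing idea, which the paper uses following EKPV, is to make the cutoff a function of the Carleman exponent rather than a product. The paper takes $h=\theta(\psi)\kappa_R\theta_\varepsilon u$ with $\psi=\mu^{1-2^{-\ell}}|x/R|^2+\varphi(t)$, so that $\mu^{2^{-\ell}}\psi$ is exactly the exponent.
\begin{itemize}
\item All derivatives of $\theta(\psi)$ live on $\{1\le\psi\le2\}$, where $e^{2\mu^{2^{-\ell}}\psi}\le e^{4\mu^{2^{-\ell}}}$.
\item On $[\frac14,\frac34]$ one has $\psi\ge3$ and $\theta(\psi)=1$, so the mass region carries at least $e^{6\mu^{2^{-\ell}}}$.
\item The gap $e^{2\mu^{2^{-\ell}}}$ beats the polynomial losses $\mu^{1-2^{-\ell}}+\mu^{2-2^{1-\ell}}$ coming from $\nabla\psi$ and $\nabla^2\psi$. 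This is condition \eqref{control2}.
\end{itemize}

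Be aware, however, that with a space--time separated weight this device does not make $h$ vanish near $t=0,1$ on all of $B_R$. For $t\le\frac18$ one has $\psi=\mu^{1-2^{-\ell}}|x/R|^2\ge2$ as soon as $|x|\ge\sqrt2\,R\mu^{-(1-2^{-\ell})/2}$. So $h=u$ on most of $B_{R-1}\setminus B_{2\varepsilon}$ at those times. The paper's assertion that $h=0$ for $t\in[0,\frac18]\cup[\frac78,1]$ therefore does not follow from the support of $\kappa_R$.

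In EKPV it is the moving centre $x/R+\varphi(t)\mathbf{e}_1$ that makes the level-set cutoff vanish at the time endpoints. The imbalance you isolated, a spatial exponent up to $\mu$ against a temporal exponent of at most $3\mu^{2^{-\ell}}$, is a real obstruction that any complete argument along these lines must confront, not just a bookkeeping issue in your version.
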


\begin{proof}
We begin by choosing a time cutoff $\varphi=\varphi(t)\in C_{c}^{\infty}((\frac{1}{8},\frac{7}{8}))$ such that
\[
\varphi=3 \qquad \text{on } \Big[\frac{1}{4},\frac{3}{4}\Big].
\]

Next, define $\theta\in C^{\infty}(\mathbb R^d)$ by
\begin{equation}
    \theta(r)=\begin{cases}
        0,& r\leq1,\\
        1,& r\geq2.
    \end{cases}
\end{equation}
For any $\varepsilon>0$, let
\[
\theta_{\varepsilon}(x):=\theta\Big(\frac{|x|}{\varepsilon}\Big).
\]
Let $\kappa_{R}(x)$ be a smooth bump function such that
\[
\kappa_R =1 \quad \text{on } B_{R-1},
\qquad
\kappa_{R}=0 \quad \text{on } B_{R}^{c}.
\]

Moreover,
\begin{equation}
    |\varphi'|+|\theta'|+\sup_{j=1,2}|\nabla^{j}\kappa_{R}|\leq C,
    \qquad
    |\theta'_{\varepsilon}|\leq \frac{C}{\varepsilon}.
\end{equation}

We set
\begin{equation*}
    \psi=\mu^{1-2^{-\ell}}\Big|\frac{x}{R}\Big|^{2}+\varphi(t),
\end{equation*}
and define
\begin{equation}
    h=\theta(\psi)\kappa_{R}(x)\theta_{\varepsilon}(x)u(x,t)
    =\theta\Big(\mu^{1-2^{-\ell}}\Big|\frac{x}{R}\Big|^{2}+\varphi(t)\Big)\kappa_{R}\theta_{\varepsilon}u.
\end{equation}

We now record the basic properties of $h$.

\begin{itemize}
    \item By the support properties of $\kappa_{R}$ and $\theta_{\varepsilon}$,
    \begin{equation}\label{suppg1}
        \operatorname{supp}h\subset[0,1]\times(B_{R}\backslash B_{\varepsilon}).
    \end{equation}

    \item Since $h$ contains the factor $\theta(\psi)$, we also have
    \begin{equation}
        \operatorname{supp}h\subset\{\psi(t,x)\geq1\}.
    \end{equation}
    When $t\in[0,\frac{1}{8}]\cup[\frac{7}{8},1]$, one has $\varphi(t)=0$, hence
    \[
    \psi=\mu^{1-2^{-\ell}}\Big|\frac{x}{R}\Big|^{2}\leq1
    \]
    in
    \[
    \Big([0,\tfrac{1}{8}]\cup[\tfrac{7}{8},1]\Big)\times B_{\mu^{-\frac{1}{2}+2^{-\ell}}R}
    \sim
    B_{R^{\frac{2^{-\ell+1}}{3-2^{-l}}}}.
    \]
    Therefore, using also the definition of $\kappa_{R}$, we obtain
    \begin{equation}
    h=0
    \qquad
    \text{for } t\in[0,\tfrac{1}{8}]\cup[\tfrac{7}{8},1].
    \end{equation}
    Moreover,
    \begin{equation}
        \operatorname{supp}(\theta'(\psi))\subset\{1\leq\psi\leq2\},
    \end{equation}
    which will provide the appropriate control of the time-cutoff terms in the Carleman estimate.

    \item When $t\in[\frac{1}{4},\frac{3}{4}]$, we have $\varphi=3$, and therefore
    \begin{equation}\label{lower bound of psi}
        \psi=\mu^{1-2^{-\ell}}\Big|\frac{x}{R}\Big|^{2}+\varphi\geq3.
    \end{equation}
    Thus $\theta(\psi)=1$ on this time interval, and hence
    \begin{equation}\label{h equal u}
        h=u
        \qquad
        \text{on }
        [\tfrac{1}{4},\tfrac{3}{4}]\times(B_{R-1}\backslash B_{2\varepsilon}).
    \end{equation}
\end{itemize}

We now apply the Carleman estimate from Proposition \ref{Carleman estimate} to
\[
f=e^{\mu|\frac{x}{R}|^{2}+\mu^{2^{-\ell}}\varphi(t)}h.
\]
Taking $r_{0}=\varepsilon$, $\mu\geq\mu_1$, and $R\geq R_{0}+1$, it follows from \eqref{suppg1}, \eqref{lower bound of psi}, and \eqref{h equal u} that
\begin{align}
&  \operatorname{(LHS)\, of \,}\eqref{carleman-lemma}\geq  \frac{\mu^{3}}{R^{6}}\int_{\mathbb{R}}\int_{\mathbb R^d}|x|^{2}|f|^{2}\,\dd x\dd t\\
    =&\frac{1}{2}\frac{\mu^{3}}{R^{6}}\int_{\mathbb{R}}\int_{\mathbb R^d}|x|^{2}|f|^{2}\,\dd x\dd t+\frac{1}{2}\frac{\mu^{3}}{R^{6}}\int_{\mathbb{R}}\int_{\mathbb R^d}|x|^{2}|f|^{2}\,\dd x\dd t\nonumber\\
    \geq&
    \frac{1}{2}\mu^{3}R^{-6}\varepsilon^{2}\int_{\mathbb{R}}\int_{\mathbb R^d}|e^{\mu|\frac{x}{R}|^{2}+\mu^{2^{-\ell}}\varphi(t)}h|^{2}\,\dd x\dd t\nonumber\\
    &+\frac{1}{2}\mu^{3}R^{-6}(2\varepsilon)^{2}\int_{1/4}^{3/4}\int_{B_{R_{0}}\backslash B_{2\varepsilon}}e^{2\mu|\frac{2\varepsilon}{R}|^{2}+6\mu^{2^{-\ell}}}|u|^{2}\,\dd x\dd t\nonumber\\
    \geq&
    \frac{1}{2}\mu^{3}R^{-6}\varepsilon^{2}\int_{\mathbb{R}}\int_{\mathbb R^d}e^{2\mu|\frac{x}{R}|^{2}+2\mu^{2^{-\ell}}\varphi(t)}|h|^{2}\,\dd x\dd t+2\mu^{3}R^{-6}\varepsilon^{2}e^{2(4\mu\frac{\varepsilon^{2}}{R^{2}}+3\mu^{2^{-\ell}})}E_{2}^{2}.
\end{align}

We next estimate the right-hand side of \eqref{carleman-lemma}. A direct computation yields
\begin{align}
    (i\partial_{t}+\mathcal{L})h
    =& (i\partial_{t}+\mathcal{L})\big(\theta(\psi)\kappa_{R}\theta_{\varepsilon}u\big)\nonumber\\
    =&i\theta'(\psi)\partial_{t}\varphi\kappa_{R}\theta_{\varepsilon}u+i\theta(\psi)\kappa_{R}\theta_{\varepsilon}\partial_{t}u+\operatorname{div}(G\nabla(\theta(\psi)\kappa_{R}\theta_{\varepsilon}))u+2G\nabla(\theta\kappa_{R}\theta_{\varepsilon})\nabla_{A}u\nonumber\\
    &+\theta(\psi)\kappa_{R}\theta_{\varepsilon}\operatorname{div}_{A}(G\nabla_{A}u)\nonumber\\
    =&i\theta(\psi)\kappa_{R}\theta_{\varepsilon}\Big(i\partial_{t}u+\operatorname{div}_{A}(G\nabla_{A}u)\Big)+i\theta'(\psi)\partial_{t}\varphi\kappa_{R}\theta_{\varepsilon}u+\operatorname{div}(G\nabla(\theta(\psi)\kappa_{R}\theta_{\varepsilon}))u\nonumber\\
    &+2G\nabla(\theta\kappa_{R}\theta_{\varepsilon})\nabla_{A}u\nonumber\\
    =&Vh+i\theta'(\psi)\partial_{t}\varphi\kappa_{R}\theta_{\varepsilon}u+\operatorname{div}(G\nabla(\theta(\psi)\kappa_{R}\theta_{\varepsilon}))u+2G\nabla(\theta\kappa_{R}\theta_{\varepsilon})\nabla_{A}u.\label{aha}
\end{align}

For the term $\operatorname{div}\big(G\nabla(\theta(\psi)\kappa_{R}\theta_{\varepsilon})\big)u$, a direct computation gives
\begin{align}
\operatorname{div}\big(G\nabla(\theta(\psi)\kappa_{R}\theta_{\varepsilon})\big)u
=&\partial_{j}\Big(g_{jk}\theta'(\psi)\partial_{k}\psi\kappa_{R}\theta_{\varepsilon}+g_{jk}\theta(\psi)\partial_{k}\kappa_{R}\theta_{\varepsilon}+g_{jk}\theta(\psi)\kappa_{R}\partial_{k}\theta_{\varepsilon}\Big)u\\
=&\Big(\partial_{j}g_{jk}\theta'(\psi)\partial_{k}\psi\kappa_{R}\theta_{\varepsilon}+g_{jk}\theta''(\psi)\partial_{j}\psi\partial_{k}\psi\kappa_{R}\theta_{\varepsilon}+g_{jk}\theta'(\psi)\partial_{j}\partial_{k}\psi\kappa_{R}\theta_{\varepsilon}\nonumber\\
&+g_{jk}\theta'(\psi)\partial_{k}\psi\partial_{j}\kappa_{R}\theta_{\varepsilon}+g_{jk}\theta'(\psi)\partial_{k}\psi\kappa_{R}\partial_{j}\theta_{\varepsilon}\nonumber\\
&+\partial_{j}g_{jk}\theta(\psi)\partial_{k}\kappa_{R}\theta_{\varepsilon}+g_{jk}\theta'(\psi)\partial_{j}\psi\partial_{k}\kappa_{R}\theta_{\varepsilon}+g_{jk}\theta(\psi)\partial_{j}\partial_{k}\kappa_{R}\theta_{\varepsilon}\nonumber\\
&+g_{jk}\theta(\psi)\partial_{k}\kappa_{R}\partial_{j}\theta_{\varepsilon}\nonumber\\
&+\partial_{j}g_{jk}\theta(\psi)\kappa_{R}\partial_{k}\theta_{\varepsilon}+g_{jk}\theta'(\psi)\partial_{j}\psi\kappa_{R}\partial_{k}\theta_{\varepsilon}+g_{jk}\theta(\psi)\partial_{j}\kappa_{R}\partial_{k}\theta_{\varepsilon}\nonumber\\
&+g_{jk}\theta(\psi)\kappa_{R}\partial_{j}\partial_{k}\theta_{\varepsilon}\Big)u.
\end{align}

Similarly,
\begin{equation}
2g_{jk}\partial_{j}\big(\theta(\psi)\kappa_{R}\theta_{\varepsilon}\big)D_ku
=
2g_{jk}\Big(\theta'(\psi)\partial_{j}\psi\kappa_{R}\theta_{\varepsilon}+\theta(\psi)\partial_{j}\kappa_{R}\theta_{\varepsilon}+\theta(\psi)\kappa_{R}\partial_{j}\theta_{\varepsilon}\Big)D_ku.
\end{equation}

Combining the above computations, we deduce
\begin{align}
   (i\partial_t+\mathcal{L})h
   =&Vh+i\theta'(\psi)\partial_{t}\varphi\kappa_{R}\theta_{\varepsilon}u+O(1)\mu^{1-2^{-\ell}}\theta'(\psi)\Big(|\nabla G|\frac{|x|}{R^{2}}\kappa_{R}\theta_{\varepsilon}
   +\frac{1}{R^{2}}\kappa_{R}\theta_{\varepsilon}\nonumber\\
   &\hspace{5ex}+\frac{|x|}{R^{2}}|\nabla\kappa_{R}|\theta_{\varepsilon}+\frac{|x|}{R^{2}}\kappa_{R}|\nabla\theta_{\varepsilon}|\Big)u\nonumber\\
   &+O(1)\mu^{1-2^{-\ell}}\theta'(\psi)\frac{|x|}{R^2}\Big(|\nabla\kappa_{R}|\theta_{\varepsilon}+\kappa_{R}|\nabla\theta_{\varepsilon}|\Big)u\nonumber\\
   &+O(1)\mu^{1-2^{-\ell}}\theta'(\psi)\frac{|x|}{R^{2}}\kappa_{R}\theta_{\varepsilon}\nabla_{A}u+O(1)\mu^{2-2^{1-\ell}}\theta''(\psi)\frac{|x|^{2}}{R^{4}}\kappa_{R}\theta_{\varepsilon}u\nonumber\\
   &+O(1)|\nabla\theta_{\varepsilon}|\theta(\psi)\partial_{k}\kappa_{R}u+O(1)|\nabla G||\nabla\theta_{\varepsilon}|\theta(\psi)\kappa_{R}u+O(1)|\nabla\theta_{\varepsilon}||\nabla\kappa_{R}|\theta(\psi)u\nonumber\\
   &+O(1)|\nabla\theta_{\varepsilon}|\theta(\psi)\kappa_{R}\nabla_{A}u+O(1)\nabla^{2}\theta_{\varepsilon}\kappa_{R}\theta(\psi) u\nonumber\\
   &+O(1)|\nabla G|\theta(\psi)\theta_\varepsilon\partial_{k}\kappa_{R}u+O(1)\theta(\psi)\theta_{\varepsilon}|\nabla^2\kappa_{R}|u+O(1)\theta(\psi)\theta_{\varepsilon}|\nabla\kappa_{R}|\nabla_{A}u.\label{estimates of cutoff function}
\end{align}

According to the order and location of the derivatives, we split these contributions into five parts, denoted by $J_i$, $i=1,\dots,5$. More precisely, $J_{1}$ and $J_{2}$ contain the terms involving derivatives of $\theta(\psi)$, while $J_{3}$ and $J_{4}$ contain the terms involving derivatives of $\theta_{\varepsilon}$ but no derivatives of $\theta(\psi)$, and $J_{5}$ contains the terms involving derivatives of $\kappa_R$ but no derivatives of $\theta(\psi)$ or $\theta_{\varepsilon}$. Thus,
\begin{align}
J_1&=i\theta'(\psi)\partial_{t}\varphi\kappa_{R}\theta_{\varepsilon}u+O(1)\mu^{1-2^{-\ell}}\theta'(\psi)\Big(|\nabla G|\frac{|x|}{R^{2}}\kappa_{R}\theta_{\varepsilon}+\frac{1}{R^{2}}\kappa_{R}\theta_{\varepsilon}+\frac{|x|}{R^{2}}|\nabla\kappa_{R}|\theta_{\varepsilon}\nonumber\\
&\hspace{9ex}+\frac{|x|}{R^{2}}\kappa_{R}|\nabla\theta_{\varepsilon}|\Big)u,\\
J_{2}&=O(1)\mu^{1-2^{-\ell}}\theta'(\psi)\frac{|x|}{R^2}\Big(|\nabla\kappa_{R}|\theta_{\varepsilon}+\kappa_{R}|\nabla\theta_{\varepsilon}|\Big)u+O(1)\mu^{1-2^{-\ell}}\theta'(\psi)\frac{|x|}{R^{2}}\kappa_{R}\theta_{\varepsilon}\nabla_{A}u\nonumber\\
&\hspace{9ex}+O(1)\mu^{2-2^{1-\ell}}\theta''(\psi)\frac{|x|^{2}}{R^{4}}\kappa_{R}\theta_{\varepsilon}u,\\
J_{3}&=O(1)|\nabla\theta_{\varepsilon}|\theta(\psi)\partial_{k}\kappa_{R}u+O(1)|\nabla G||\nabla\theta_{\varepsilon}|\theta(\psi)\kappa_{R}u+O(1)|\nabla\theta_{\varepsilon}||\nabla\kappa_{R}|\theta(\psi)u\nonumber\\
&\hspace{3ex}+O(1)|\nabla\theta_{\varepsilon}|\theta(\psi)\kappa_{R}\nabla_{A}u,\\
J_{4}&=O(1)\nabla^{2}\theta_{\varepsilon}\kappa_{R}\theta(\psi) u,\\
J_{5}&=O(1)|\nabla G|\theta(\psi)\theta_\varepsilon\partial_{k}\kappa_{R}u+O(1)\theta(\psi)\theta_{\varepsilon}|\nabla^2\kappa_{R}|u+O(1)\theta(\psi)\theta_{\varepsilon}|\nabla\kappa_{R}|\nabla_{A}u.
\end{align}

We now estimate these terms separately.

First, for $J_1$ and $J_2$, note that
\begin{equation}
    \operatorname{supp}\theta'(\psi)\subset\{1\leq\psi\leq2\}
    \quad\Rightarrow\quad
    \mu^{2^{-\ell}}\psi\in[\mu^{2^{-\ell}},2\mu^{2^{-\ell}}],
\end{equation}
and also
\begin{equation}
    \operatorname{supp}h\subset\Big\{t\in[\frac{1}{8},\frac{7}{8}],\ \mu^{1-2^{-\ell}}\Big|\frac{x}{R}\Big|^2\leq1\Big\}.
\end{equation}
Therefore,
\begin{align}
      &\int_{0}^{1}\int_{\mathbb R^d}e^{2\mu^{2^{-\ell}}\psi}|J_{1}+J_{2}|^{2}\,\dd x\dd t\nonumber\\
      \leq& C\int_{\frac{1}{8}}^{\frac{7}{8}}\int_{\mathbb R^d}e^{4\mu^{2^{-\ell}}}(\mu^{1-2^{-\ell}}+\mu^{2-2^{1-\ell}})(|u|^{2}+|\nabla_{A}u|^{2})\,\dd x\dd t\nonumber\\
      \leq& C_{1}(\mu^{1-2^{-\ell}}+\mu^{2-2^{1-\ell}})e^{4\mu^{2^{-\ell}}}E_{1}^{2}.
  \end{align}
Here, $C_{1}$ depends on $\|\nabla G\|_{L^{\infty}}$, $\Lambda$, $\varepsilon$, and $d$.

Next, we estimate $J_{3}$ and $J_{4}$. Since
\[
\operatorname{supp}\nabla\theta_{\varepsilon}\subset B_{2\varepsilon}\backslash B_{\varepsilon}
\]
and $\varphi\leq3$, we obtain
\begin{align}
    &\int_{0}^{1}\int_{\mathbb R^d}e^{2\mu^{2^{-\ell}}\psi}|J_{3}+J_{4}|^{2}\,\dd x\dd t\nonumber\\
    \leq& C_{2}e^{(2\frac{(2\varepsilon)^{2}}{R^{2}}\mu+6\mu^{2^{-\ell}})}\int_{\frac{1}{8}}^{\frac{7}{8}}\int_{B_{2\varepsilon}\backslash B_{\varepsilon}}(|u|^{2}+|\nabla_{A}u|^{2})\,\dd x\dd t\nonumber\\
    \leq& C_{2}e^{(2\frac{(2\varepsilon)^{2}}{R^{2}}\mu+6\mu^{2^{-\ell}})}E_{1}^{2}.
\end{align}

Finally, for the term $J_{5}$, using \eqref{suppg1}, we have
\begin{align}
    e^{\mu\frac{|x|^{2}}{R^{2}}+\mu^{2^{-\ell}}\varphi}\leq e^{\mu+3\mu^{2^{-\ell}}},
\end{align}
and hence
\begin{align}
    &\int_{0}^{1}\int_{\mathbb R^d}e^{2\mu|\frac{x}{R}|^{2}+2\mu^{2^{-\ell}}\varphi(t)}|J_5|^2\,\dd x\dd t\nonumber\\
    \leq& C_{3}e^{2\mu+6\mu^{2^{-\ell}}}\int_{\frac{1}{8}}^{\frac{7}{8}}\int_{B_{R}\backslash B_{R-1}}(|u|^{2}+|\nabla_{A}u|^{2})\,\dd x\dd t\nonumber\\
    =&C_{3}e^{2\mu+6\mu^{2^{-\ell}}}\delta(R).
\end{align}

It remains to estimate the term involving the potential $Vh$. Since $V(t,x)\in L^\infty (\R^{d+1})$, we have
\begin{align}
\int_{0}^{1}\int_{\mathbb R^d}e^{2\mu|\frac{x}{R}|^{2}+2\mu^{2^{-\ell}}\varphi(t)}|Vh|^{2}\,\dd x\dd t
\leq \|V\|_{L^{\infty}}^{2}\int_{0}^{1}\int_{\mathbb R^d}e^{2\mu|\frac{x}{R}|^{2}+2\mu^{2^{-\ell}}\varphi(t)}|h|^{2}\,\dd x\dd t.
\end{align}

Collecting all the previous bounds, we arrive at
\begin{align}
    &\frac{1}{2}\mu^{3}R^{-6}\varepsilon^{2}\int_{0}^{1}\int_{\mathbb R^d}e^{2\mu|\frac{x}{R}|^{2}+2\mu^{2^{-\ell}}\varphi(t)}|h|^{2}\,\dd x\dd t+\mu^{3}R^{-6}\varepsilon^{2}e^{2(\mu\frac{4\varepsilon^{2}}{R^{2}}+3\mu^{2^{-\ell}})}E_{2}^{2}\nonumber\\
    \leq&\|V\|_{L^{\infty}}\int_{\mathbb R^d}e^{2\mu|\frac{x}{R}|^{2}+2\mu^{2^{-\ell}}\varphi(t)}|h|^{2}\,\dd x\dd t+C_{1}(\mu^{1-2^{-\ell}}+\mu^{2-2^{1-\ell}})e^{4\mu^{2^{-\ell}}}E_{1}^{2}\nonumber\\
    &+C_{2}e^{2(\mu\frac{4\varepsilon^{2}}{R^{2}}+3\mu^{2^{-\ell}})}E_{1}^{2}
    +C_{3}e^{2\mu+6\mu^{2^{-\ell}}}\delta(R).\label{control0}
\end{align}

We now require $\mu$ and $R$ to satisfy
\begin{equation}\label{control1}
    \frac{1}{2}\mu^{3}R^{-6}\varepsilon^{2}\geq 2\|V\|_{L^{\infty}},
\end{equation}
and
\begin{equation}\label{control2}
\mu^{3}R^{-6}\varepsilon^{2}e^{2(\mu\frac{4\varepsilon^{2}}{R^{2}}+3\mu^{2^{-\ell}})}E_{2}^{2}\geq 2C_{1}(\mu^{1-2^{-\ell}}+\mu^{2-2^{1-\ell}})e^{4\mu^{2^{-\ell}}}E_{1}^{2}+C_{2}e^{2(\mu\frac{4\varepsilon^{2}}{R^{2}}+3\mu^{2^{-\ell}})}E_{1}^{2}.
\end{equation}

Condition \eqref{control1} implies
\begin{equation}\label{condition of mu R1}
    \frac{\mu}{R^{2}}\geq(4\|V\|_{L^{\infty}}\varepsilon^{-2})^{\frac{1}{3}}.
\end{equation}
On the other hand, \eqref{control2} yields
\begin{align}\label{control3}
    (\mu^{3}R^{-6}\varepsilon^{2}E_{2}^{2}-E_{1}^{2}C_{2})e^{\frac{8\varepsilon^{2}\mu}{R^{2}}+2\mu^{2^{-\ell}}}\geq 2C_{1}E_{1}^{2}(\mu^{1-2^{-\ell}}+\mu^{2-2^{1-\ell}}).
\end{align}

To ensure this inequality, we choose $\mu$ and $R$ so that
\begin{equation}\label{control4}
    \mu^{3}R^{-6}\varepsilon^{2}E_{2}^{2}-E_{1}^{2}C_{2}\geq 4C_{1}E_{1}^{2},
\end{equation}
and
\begin{equation}
    e^{2\mu^{2^{-\ell}}}\geq 2\mu^{2-2^{1-\ell}},
    \qquad
    \mu\gg1.
\end{equation}
This requires
\begin{equation}\label{condition of mu R2}
    \frac{\mu}{R^{2}}\geq E_{2}^{-2/3}\varepsilon^{-2/3}(4C_{1}E_{1}^{2}+E_{1}^{2}C_{2})^{1/3}.
\end{equation}

Under \eqref{condition of mu R1} and \eqref{condition of mu R2}, we infer from \eqref{control0} that
\begin{equation}
\begin{aligned}
    &\frac{1}{4}\mu^{3}R^{-6}\varepsilon^{2}\int_{0}^{1}\int_{\mathbb R^d}e^{2\mu|\frac{x}{R}|^{2}+2\mu^{2^{-\ell}}\varphi(t)}|h|^{2}\,\dd x\dd t\\
    &\qquad
    +\frac{1}{2}\mu^{3}R^{-6}\varepsilon^{2}e^{2(\mu\frac{4\varepsilon^{2}}{R^{2}}+3\mu^{2^{-\ell}})}E_{2}^{2}
    \leq C_{3}e^{2\mu+6\mu^{2^{-\ell}}}\delta(R).
\end{aligned}
\label{final mu R}
\end{equation}
In particular,
\[
\frac{1}{2}\mu^{3}R^{-6}\varepsilon^{2}e^{6\mu^{2^{-\ell}}}E_{2}^{2}\leq C_{3}e^{2\mu+6\mu^{2^{-\ell}}}\delta(R),
\]
and hence
\[
\frac{1}{2}\mu^{3}R^{-6}\varepsilon^{2}e^{-2\mu}E_{2}^{2}\leq C_{3}\delta(R).
\]

Combining \eqref{condition of mu R1}, \eqref{condition of mu R2}, \eqref{final mu R}, and the condition in Proposition \ref{Carleman estimate}, we choose
\begin{equation}\label{selection of mu and R}
    \mu=\mathcal{C}R^{\frac{6}{3-2^{-\ell}}},
    \qquad
    R\geq \max\Big((4\|V\|_{L^{\infty}}\varepsilon^{-2})^{\frac{3-2^{-\ell}}{3\cdot2^{1-\ell}}},\varepsilon^{-2/3}(2C_{1}E_{1}^{2}+E_{1}^{2}C_{2})^{\frac{3-2^{-\ell}}{3\cdot2^{1-\ell}}}\Big).
\end{equation}
Substituting this choice into \eqref{final mu R} gives \eqref{goal estimate}.
\end{proof}

We can now complete the proof of Theorem \ref{thm1}.

\begin{proof}[Proof of Theorem \ref{thm1}]
We argue by contradiction. Assume that there exist $R_{0}>0$ sufficiently large and $\varepsilon>0$ sufficiently small such that
\begin{equation}
\int_{1/4}^{3/4}\int_{B_{R_{0}}\backslash B_{2\varepsilon}}|u|^{2}\,\dd x\dd t\in(0,\infty).
\end{equation}
Moreover, by logarithmic convexity, we know that
\begin{equation}
\int_{\frac{1}{8}}^{\frac{7}{8}}\int_{\mathbb R^d}(|u|^{2}+|\nabla_{A}u|^{2})\,\dd x\dd t<\infty.
\end{equation}

Applying Theorem \ref{mass lower}, we obtain the lower bound
\begin{equation}\label{contra part}
\int_{\frac{1}{8}}^{\frac{7}{8}}\int_{B_{R}\backslash B_{R-1}}\Big(|u|^{2}+|\nabla_{A}u|^{2}\Big)\,\dd x\dd t\geq C_{1}e^{-C_{0}R^{\frac{6}{3-2^{-\ell}}}}.
\end{equation}

On the other hand, Corollary \ref{cor-log} implies that
\begin{equation}
\int_{0}^{1}\int_{\mathbb R^d}e^{\sigma|x|^{2+\vartheta}}\Big(|u|^{2}+t(1-t)|\nabla_{A}u|^{2}\Big)\,\dd x\dd t<\infty,
\end{equation}
hence
\begin{equation}\label{reduction last}
\lim_{R\to\infty}e^{\sigma R^{2+\vartheta}}\int_{\frac{1}{8}}^{\frac{7}{8}}\int_{B_{R+1}\backslash B_{R}}\big(|u|^{2}+|\nabla_{A}u|^{2}\big)\,\dd x\dd t=0.
\end{equation}

Since
\[
\frac{6}{3-2^{-\ell}}<2+\vartheta
\]
for $\ell$ sufficiently large, the lower bound in \eqref{contra part} is incompatible with \eqref{reduction last} for large $R$. This contradiction proves that $u\equiv0$.
\end{proof}

\begin{remark}
In fact, within the present framework the super-quadratic exponential decay assumption on the data $u_0$ and $u(1)$ cannot be improved to the quadratic regime. Indeed, this would require sending $\ell\to\infty$ in the choice of the weight function, and in that limit condition \eqref{control2} breaks down.
\end{remark}

\section{Proof of Theorem \ref{schrodinger}}

In this section, we prove the Hardy-type uniqueness result under the additional structural assumption on the metric $G$. More precisely, under Hypothesis \ref{assm2}, we establish the uniqueness statement in the spirit of \cite{BFGRV-JFA}.

We begin with the corresponding Carleman estimate.

\begin{proposition}[Carleman estimate]
Assume that $d\geq2$, and the magnetic field $B$ satisfies
 \begin{equation}
     \|(G\textbf{e}_1)^{\top}B\|_{L^{\infty}_{t,x}}\leq\varepsilon_1,\, (Gx)^{\top}B\in L_{x}^{\infty}.
 \end{equation}


Then, for any $\varepsilon>0$, any $\mu>0$, any
\[
h=h(x,t)\in C_{0}^{\infty}(\mathbb{R}_{x}^{d}\times[0,1]_{t}),
\]
and any $R>\mathcal{C}_{3}\mu\varepsilon^{-\frac{1}{2}}$, the following inequality holds:
\begin{align}
    &\frac{R}{4}\sqrt{\frac{\varepsilon}{\mu}}\Big\|e^{\mu|x+Rt(1-t)\textbf{e}_{1}|^{2}-\frac{(1+\varepsilon+N\mu^{2}(\varepsilon_{0}+\varepsilon_{1}))R^{2}t(1-t)}{16\mu}}h(x,t)\Big\|_{L^{2}(\mathbb{R}^{d+1})}\nonumber\\
    \leq&
    \Big\|e^{\mu|x+Rt(1-t)\textbf{e}_{1}|^{2}-\frac{(1+\varepsilon+N\mu^{2}(\varepsilon_{0}+\varepsilon_{1}))R^{2}t(1-t)}{16\mu}}(\partial_{t}-i\operatorname{div}_{A}(G\nabla_{A}\cdot))h\Big\|_{L^{2}(\mathbb{R}^{d+1})}. \label{carleman estimate for special designed test function}
\end{align}
\end{proposition}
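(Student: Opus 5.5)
Set $\phi(x,t):=\mu|x+Rt(1-t)\textbf{e}_1|^2-\frac{(1+\varepsilon+N\mu^2(\varepsilon_0+\varepsilon_1))R^2t(1-t)}{16\mu}$ and $f=e^{\phi}h$. The plan follows the proof of Proposition \ref{Carleman estimate}. As there, $e^{\phi}(\partial_t-i\operatorname{div}_A(G\nabla_A\cdot))e^{-\phi}f=(\partial_t-\mathcal S-\mathcal A)f$, with $\mathcal S,\mathcal A$ given by \eqref{symmetry part}, \eqref{antisymmetry part} for $a=0$, $b=1$. Expanding the square and integrating by parts in $t$ gives
\begin{align*}
\|(\partial_t-\mathcal S-\mathcal A)f\|^2_{L^2(\mathbb R^{d+1})}\geq\int_{\mathbb R^{d+1}}(\partial_t\mathcal S+[\mathcal S,\mathcal A])f\bar f\,\dd x\dd t .
\end{align*}
It therefore suffices to show that the right-hand side is at least $\frac{\varepsilon R^2}{16\mu}\|f\|^2_{L^2}$. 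Taking square roots then yields the factor $\frac R4\sqrt{\varepsilon/\mu}$.

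\textbf{Step 1: the commutator.} Compute $\partial_t\mathcal S+[\mathcal S,\mathcal A]$ from \eqref{S time derivative} and \eqref{formula of commutator}. Write $y=x+Rt(1-t)\textbf{e}_1$, so that $\nabla\phi=2\mu y$. The model terms are as follows.
\begin{itemize}
\item $32\mu^3|Gy|^2|f|^2$ from $2G\nabla\phi\cdot\nabla(G\nabla\phi\cdot\nabla\phi)$, together with its error $16\mu^3 g_{kj}y_j\partial_kg_{lm}y_ly_m$.
\item $8\mu|G\nabla_Af|^2$ from the Hessian term.
\item $\partial_{tt}\phi$, equal to $-4\mu R y_1+2\mu R^2(1-2t)^2+\frac{(1+\varepsilon+N\mu^2(\varepsilon_0+\varepsilon_1))R^2}{8\mu}$.
\item From $2\operatorname{Im}(G\nabla\partial_t\phi\cdot\nabla_A f)\bar f$, with $\nabla\partial_t\phi=2\mu R(1-2t)\textbf{e}_1$, the term $4\mu R(1-2t)\operatorname{Im}\int (G\textbf{e}_1)\cdot\nabla_Af\,\bar f$.
\end{itemize}
As in \cite{EKPV-JEMS,BFGRV-JFA}, completing squares in $y_1$ and in the mixed term $\mu R(1-2t)$ leaves the positive residue $\frac{R^2}{8\mu}$. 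The remaining part of the $\partial_{tt}\phi$ constant is then $\frac{(\varepsilon+N\mu^2(\varepsilon_0+\varepsilon_1))R^2}{8\mu}$. This is exactly what will be spent on the perturbative terms.

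\textbf{Step 2: magnetic terms.} The $B$-term $-8\mu\operatorname{Im}\int g_{jk}g_{lm}y_lB_{mk}f\overline{D_jf}$ splits into a part with $x$ and a part with $Rt(1-t)\textbf{e}_1$.
\begin{itemize}
\item The $x$-part is controlled by Cauchy--Schwarz using $(Gx)^\top B\in L^\infty$. It is absorbed by $4\mu|G\nabla_Af|^2$ plus $C\mu\|f\|^2$, and the latter is harmless because $R>\mathcal C_3\mu\varepsilon^{-1/2}$.
\item The $\textbf{e}_1$-part is at most $2\mu R\varepsilon_1\|f\|\|\nabla_Af\|\le \mu\|G\nabla_A f\|^2+ C\mu R^2\varepsilon_1^2\|f\|^2$. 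This must be dominated by $\frac{N\mu\varepsilon_1R^2}{8}\|f\|^2$, so the $\mu^2$ factor in the weight is what compensates it.
\end{itemize}

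\textbf{Step 3: metric error terms (main obstacle).} The error terms coming from $\nabla G$ are $\mathcal Q^2\phi$, the $K_5$--$K_7$ type terms, and the cubic error. Those multiplied by $x$ are bounded as in Section 3, using $\langle x\rangle^3|\nabla G|\le\varepsilon_0$. The dangerous ones are multiplied by $R\textbf{e}_1$, which produces terms such as $R\,\partial_lg_{lm}\partial_m\partial_kg_{k1}$ and $R\,g_{lm}\partial_l\partial_m\partial_kg_{1k}$.

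Under Hypothesis \ref{assm2} all entries $g_{1k}$ and $g_{k1}$ are constant. Moreover $G$ depends only on $x'$, so $\partial_1g_{lm}\equiv0$. Hence every term in which the index $1$ appears on a derivative of $G$ vanishes. This is the step I expect to require the most care: checking index by index in \eqref{formula of commutator} that each surviving term has a factor $x'$ or no $R$. For example:
\begin{itemize}
\item $g_{lm}y_l\partial_mg_{jk}$ gives $Rt(1-t)g_{1m}\partial_m g_{jk}=R\,t(1-t)\partial_1g_{jk}=0$.
\item In the cubic term, $y_l y_m\partial_kg_{lm}$ only involves $l,m\ge2$, so it reduces to $|x'|^2$.
\end{itemize}
What survives is bounded by $c_d\varepsilon_0\mu(|G\nabla_Af|^2+(1+\mu^2|y|^2\langle x'\rangle^{-2})|f|^2)$. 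The gradient part is absorbed by the remaining $\mu|G\nabla_Af|^2$, with $12c_d\Lambda\varepsilon_0<1$. The zero-order part is bounded by $C\varepsilon_0\mu^3\|f\|^2$, plus pieces absorbed in $32\mu^3|Gy|^2$. For the choice $N=\max\{384c_d,544\}$, these are dominated by $\frac{N\mu\varepsilon_0R^2}{16}\|f\|^2$, since $R\ge1$. The leftover positive quantity is then $\ge\frac{\varepsilon R^2}{16\mu}\|f\|^2$ once $R>\mathcal C_3\mu\varepsilon^{-1/2}$ absorbs the $O(\mu)$ constants. This concludes the estimate.
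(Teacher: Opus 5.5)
Your overall route is the paper's: the same weight, conjugation with $a=0$, $b=1$, the lower bound by $\int(\partial_t\mathcal S+[\mathcal S,\mathcal A])f\bar f$, the two square completions, and the use of Hypothesis~\ref{assm2} to kill every $R\textbf{e}_1$-contribution carrying a derivative of $G$. Your index checks there are correct. The gap is in the bookkeeping of the gradient terms.

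The mixed term is $8\mu R(1-2t)\operatorname{Im}\int D_1f\,\bar f$, not $4\mu R(1-2t)\operatorname{Im}\int D_1f\,\bar f$. Both $\mathcal S_t$ in \eqref{St} and the commutator $[\partial_t\phi,\mathcal A]$ (the term $I_3$ in the proof of Lemma~\ref{technical-lemma}) contribute $2\operatorname{Im}\int G\nabla\partial_t\phi\cdot\nabla_Af\,\bar f$. Hence, using $G\textbf{e}_1=\textbf{e}_1$,
\begin{equation*}
8\mu\|G\nabla_Af\|^2+8\mu R\int(1-2t)\operatorname{Im}(D_1f\,\bar f)+2\mu R^2\int(1-2t)^2|f|^2=8\mu\Big\|G\nabla_Af+i\tfrac{R(1-2t)}{2}\textbf{e}_1f\Big\|^2 .
\end{equation*}
Completing this square uses up the entire Hessian term $8\mu\|G\nabla_Af\|^2$. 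Nothing is left of the $4\mu\|G\nabla_Af\|^2$ you spend on the $x$-part of the magnetic term, nor of the $\mu\|G\nabla_Af\|^2$ you spend on the $\textbf{e}_1$-part and on the metric terms. This coercivity cannot be recovered cheaply either. Minimizing the pointwise quadratic form shows that reserving $\delta\|G\nabla_Af\|^2$ from the square costs about $\frac{\delta R^2}{4}\|f\|^2$, because the square degenerates on $G\nabla_Af=-i\frac{R(1-2t)}{2}\textbf{e}_1f$.

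So every gradient-type error has to be estimated against $\|G\nabla_Af+i\frac{R(1-2t)}{2}\textbf{e}_1f\|$, as the paper does. This conversion is exactly where the term $N\mu^2(\varepsilon_0+\varepsilon_1)$ in the weight is spent:
\begin{itemize}
\item The metric terms (the paper's $Z_6+Z_7+Z_8\ge-6c_d\varepsilon_0\mu\|\nabla_Af\|^2$) become a loss of order $c_d\varepsilon_0\mu R^2\|f\|^2$. This is the paper's $48c_d\varepsilon_0\mu R^2$, whence $N\ge384c_d$.
\item The $\textbf{e}_1$-magnetic part becomes a loss of order $\varepsilon_1\mu R^2\|f\|^2$, whence $N\ge544$.
\end{itemize}
In your Step~3 the weight term only has to beat $O(\varepsilon_0\mu^3)$. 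That misidentifies the mechanism, and the value $N=\max\{384c_d,544\}$ does not come out of your estimates.

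For the $\textbf{e}_1$-part, pairing $f$ with $\overline{G\nabla_Af+i\frac{R(1-2t)}{2}\textbf{e}_1f}$ produces a correction proportional to $(G\textbf{e}_1)^{\top}B\textbf{e}_1=\textbf{e}_1^{\top}B\textbf{e}_1=0$ by antisymmetry. Its cost is therefore only $O(\mu R^2\varepsilon_1^2)\|f\|^2$.

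For the $x$-part, the same pairing leaves a term of size $4\mu R\|(Gx)^{\top}B\|_{L^\infty}\|f\|^2$. This is of order $\mu R$, not an $O(\mu)$ constant, so your claim that it is harmless once $R>\mathcal C_3\mu\varepsilon^{-1/2}$ is unsupported. The paper absorbs it into $\frac{\varepsilon R^2}{16\mu}$ by requiring $R\gtrsim\mu^2\|(Gx)^{\top}B\|_{L^\infty}/\varepsilon$; see the final condition on $R$ in its proof.

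Finally, $\frac{R^2}{8\mu}$ is not a residue. It is consumed in
\begin{equation*}
32\mu^3|y|^2-4\mu Ry_1+\tfrac{R^2}{8\mu}=32\mu^3\Big|y-\tfrac{R}{16\mu^2}\textbf{e}_1\Big|^2 .
\end{equation*}
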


\begin{proof}
Let
\begin{equation}
f(x,t)=e^{\mu|x+Rt(1-t)\textbf{e}_{1}|^{2}-\frac{(1+\varepsilon+N(\varepsilon_{0}+\varepsilon_{1})\mu^{2})R^{2}t(1-t)}{16\mu}}h(x,t).
\end{equation}
Then the Carleman estimate can be rewritten as
\begin{align*}
    &e^{\mu|x+Rt(1-t)\textbf{e}_{1}|^{2}-\frac{(1+\varepsilon+N(\varepsilon_{0}+\varepsilon_{1})\mu^{2})R^{2}t(1-t)}{16\mu}}(\partial_{t}-i\operatorname{div}_{A}(G\nabla_{A}\cdot))h\\
=& e^{\mu|x+Rt(1-t)\textbf{e}_{1}|^{2}-\frac{(1+\varepsilon+N(\varepsilon_{0}+\varepsilon_{1})\mu^{2})R^{2}t(1-t)}{16\mu}}(\partial_{t}-i\operatorname{div}_{A}(G\nabla_{A}\cdot))\big(e^{-\mu|x+Rt(1-t)\textbf{e}_{1}|^{2}+\frac{(1+\varepsilon+N(\varepsilon_{0}+\varepsilon_{1})\mu^{2})R^{2}t(1-t)}{16\mu}}f\big)\\
=&(\partial_{t}-\mathcal{S}-\mathcal{A})f.
\end{align*}
Note that $\mathcal{S}$ and $\mathcal{A}$ are given by \eqref{symmetry part} and \eqref{antisymmetry part}, respectively, with $a=0$ and $b=1$.

Following the standard argument for Carleman estimates, we obtain
\begin{align}
    \big\|(\partial_{t}-\mathcal{S}-\mathcal{A})f\big\|^{2}_{L^{2}(\mathbb{R}^{d+1})}
    =&\|(\partial_{t}-\mathcal{A})f\|^{2}_{L^{2}(\mathbb{R}^{d+1})}+\|\mathcal{S}f\|^{2}_{L^{2}(\mathbb{R}^{d+1})}\nonumber\\
    &-2\operatorname{Re}\int_{\mathbb{R}^{d+1}}\mathcal{S}f\overline{(\partial_{t}-\mathcal{A})f}\,\dd x\dd t\\
    \geq&\int_{\mathbb{R}^{d+1}}\big(\partial_{t}\mathcal{S}+[\mathcal{S},\mathcal{A}]\big)f\bar{f}\,\dd x\dd t.
\end{align}

Using \eqref{S time derivative} and \eqref{formula of commutator} with $a=0$ and $b=1$, and taking
\begin{align}
\varphi(x,t)=\mu|x+Rt(1-t)\textbf{e}_{1}|^{2}-\frac{(1+\varepsilon+N(\varepsilon_{0}+\varepsilon_{1})\mu^{2})R^{2}t(1-t)}{16\mu},
\end{align}
we compute
\begin{align*}
    &\int_{\mathbb{R}^{d+1}}(\partial_{t}\mathcal{S}+[\mathcal{S},\mathcal{A}])f\bar{f}\,\dd x\dd t\\
    =&{8\mu R\int_{\mathbb{R}^{d+1}}\operatorname{Im}G(1-2t)\textbf{e}_{1}\cdot\nabla_{A}f\bar{f}}\,\dd x\dd t+\int_{\mathbb{R}^{d+1}}2G\nabla\varphi\cdot\nabla(G\nabla\varphi\cdot\nabla\varphi)|f|^{2}\,\dd x\dd t\\
    &-\int_{\mathbb{R}^{d+1}}\mathcal{Q}^{2}\varphi|f|^{2}\,\dd x\dd t\\
    &-8\mu\int_{\mathbb{R}^{d+1}}g_{jk}g_{lm}(x_{l}+Rt(1-t)\textbf{e}_{1}\delta_{1l})B_{mk}f\overline{D_{j}f}\,\dd x\dd t+8\mu\int_{\mathbb{R}^{d+1}}|G\nabla_{A}f|^{2}\,\dd x\dd t\\
    &+2\int_{\mathbb{R}^{d+1}} g_{jk}\partial_{k}g_{lm}\partial_{l}\varphi D_{m}f\overline{D_{j}f}\,\dd x\dd t\\
    &-2\int_{\mathbb{R}^{d+1}}g_{lm}\partial_{l}\varphi\partial_{m}g_{jk}D_{k}f\overline{D_{j}f}\,\dd x\dd t
    +\int_{\mathbb{R}^{d+1}}2\partial_{j}g_{lm}\partial_{l}\varphi g_{jk}D_{k}f\overline{D_{m}f}\,\dd x\dd t\\
    &+\int_{\mathbb{R}^{d+1}}\Bigg(\frac{(1+\varepsilon+N\varepsilon_{0}\mu^{2})R^{2}}{8\mu}+2\mu R^{2}|\textbf{e}_{1}|^{2}(1-2t)^{2}-4\mu R\textbf{e}_{1}\cdot(x+Rt(1-t)\textbf{e}_{1})\Bigg)|f|^{2}\,\dd x\dd t\\
    =:&\sum_{\ell=1}^{9}Z_{\ell}.
\end{align*}

We now estimate these terms.

\textbf{Estimate of $Z_1$, $Z_{5}$, $Z_{9}^{2}$.}
\begin{align}
Z_{1}=8\mu\int_{\mathbb{R}^{d+1}}\Big|G\nabla_{A}f+i\frac{\textbf{e}_{1}R(1-2t)}{2}f\Big|^{2}\,\dd x\dd t-Z_{5}-Z_{9}^{2},
\end{align}
where
\[
Z_{9}^{2}= \int_{\mathbb{R}^{d+1}}2\mu R^{2}|\textbf{e}_{1}|^{2}(1-2t)^{2}|f|^{2}\,\dd x\dd t.
\]

\textbf{Estimate of $Z_2$, $Z_{9}^{\textit{part}}$.}

\ 
Since
\[
G=I+\tilde{G},
\qquad
\tilde G=(\tilde g_{jk})_{j,k=1}^d,
\qquad
\tilde{g}_{1k}=0,
\]
a direct calculation yields
\begin{align}
    G\nabla\varphi\cdot\nabla(G\nabla\varphi\cdot\nabla\varphi)
    =&\nabla\varphi\cdot\nabla(|\nabla\varphi|^{2})+\nabla\varphi\cdot\nabla(\tilde{G}\nabla\varphi\cdot\nabla\varphi)\nonumber\\
    &+\tilde{G}\nabla\varphi\cdot\nabla(\tilde{G}\nabla\varphi\cdot\nabla\varphi)+\tilde{G}\nabla\varphi\cdot\nabla(|\nabla\varphi|^{2}).
\end{align}
Accordingly,
\begin{align}
    Z_{2}
    =&\int_{\mathbb{R}^{d+1}}\Big[\nabla\varphi\cdot\nabla(|\nabla\varphi|^{2})|f|^{2}+\nabla\varphi\cdot\nabla(\tilde{G}\nabla\varphi\cdot\nabla\varphi)|f|^{2}\nonumber\\
    &\hspace{8ex}
    +\tilde{G}\nabla\varphi\cdot\nabla(\tilde{G}\nabla\varphi\cdot\nabla\varphi)|f|^{2}
    +\tilde{G}\nabla\varphi\cdot\nabla(|\nabla\varphi|^{2})|f|^{2}\Big]\,\dd x\dd t\nonumber\\
    :=&Z_{2}^{1}+Z_{2}^{2}+Z_{2}^{3}+Z_{2}^{4}. \label{decomposition of G}
\end{align}

By the definition of $\varphi$, we derive
\begin{equation}
Z_{2}^{1}=32\mu^{3}\int_{\mathbb{R}^{d+1}}|x+Rt(1-t)\textbf{e}_{1}|^{2}|f|^{2}\,\dd x\dd t.\label{spe1}
\end{equation}
Let
\begin{equation}
Z_{9}^{\textit{part}}:=\int_{\mathbb{R}^{d+1}}\frac{R^{2}}{8\mu}|f|^{2}\,\dd x\dd t+\int_{\mathbb{R}^{d+1}}-4\mu R\textbf{e}_{1}\cdot(x+Rt(1-t)\textbf{e}_{1})|f|^{2}\,\dd x\dd t.
\end{equation}

Completing the square, we obtain the nonnegative expression
\begin{align}
Z_{2}^{1}+Z_{9}^{\textit{part}}
=32\mu^{3}\int_{\mathbb{R}^{d+1}}|f|^2\Big|x+Rt(1-t)\textbf{e}_{1}-\frac{R}{16\mu^{2}}\textbf{e}_{1}\Big|^{2}\,\dd x\dd t\geq0.
\end{align}

The second term in \eqref{decomposition of G} reads
\begin{align}
    Z_{2}^{2}
    =&\int_{\mathbb{R}^{d+1}}\nabla\varphi\cdot\nabla(\tilde{G}\nabla\varphi\cdot\nabla\varphi)|f|^{2}\,\dd x\dd t\\
    =&\int_{\mathbb{R}^{d+1}}\Big[\partial_{j}\varphi\partial_{j}\tilde{g}_{kl}\partial_{k}\varphi\partial_{l}\varphi+\partial_{j}\varphi\tilde{g}_{kl}\partial_{k}\partial_{j}\varphi\partial_{l}\varphi+\partial_{j}\varphi\tilde{g}_{kl}\partial_{k}\varphi\partial_{l}\partial_{j}\varphi\Big]|f|^{2}\,\dd x\dd t\\
    =&\int_{\mathbb{R}^{d+1}}\Big[\partial_{j}\tilde{g}_{kl}\partial_{j}\varphi\partial_{k}\varphi\partial_{l}\varphi+\mu\partial_{j}\varphi\tilde{g}_{kl}\delta_{jk}\partial_{l}\varphi+\mu\partial_{j}\varphi\tilde{g}_{kl}\partial_{k}\varphi\delta_{lj}\Big]\,\dd x\dd t\\
    \geq&\int_{\mathbb{R}^{d+1}}\Big(2c\mu|\nabla\varphi|^{2}-c_{d}\mu\sup_{x^\prime\in\mathbb{R}^{d-1}}(|x'||\nabla\tilde{G}(x')|)|\nabla\varphi|^{2}\Big)|f|^{2}\,\dd x\dd t,\label{variable1}
\end{align}
where $c:=\lambda-1>0$ denotes a positive lower bound for the positive definite matrix $\tilde{G}$.

Using the same strategy, we estimate $Z_{2}^{3}$:
\begin{align}
Z_{2}^{3}
=&\int_{\mathbb{R}^{d+1}}\tilde{G}\nabla\varphi\cdot\nabla(\tilde{G}\nabla\varphi\cdot\nabla\varphi)|f|^{2}\,\dd x\dd t\\
=&\int_{\mathbb{R}^{d+1}}\Big[\tilde{g}_{jm}\partial_{m}\varphi\partial_{j}\tilde{g}_{kl}\partial_{k}\varphi\partial_{l}\varphi+\tilde{g}_{jm}\partial_{m}\varphi\tilde{g}_{kl}\partial_{k}\partial_{j}\varphi\partial_{l}\varphi\nonumber\\
&\hspace{10ex}
+\tilde{g}_{jm}\partial_{m}\varphi\tilde{g}_{kl}\partial_{k}\varphi\partial_{l}\partial_{j}\varphi\Big]|f|^{2}\,\dd x\dd t\\
\geq&\int_{\mathbb{R}^{d+1}}\Big[2\mu c_{d}|G\nabla\varphi|^{2}-\mu c_{d}\Lambda\Big(\sup_{x^\prime\in\mathbb{R}^{d-1}}|x'||\nabla\tilde{G}(x')|\Big)|\nabla\varphi|^{2}\Big]|f|^{2}\,\dd x\dd t. \label{variable2}
\end{align}

The lower bound for the matrix $\tilde{G}$ implies
\begin{align}
Z_{2}^{4}
=&\int_{\mathbb{R}^{d+1}}\tilde{G}\nabla\varphi\cdot\nabla(|\nabla\varphi|^{2})|f|^{2}\,\dd x\dd t\\
=&\int_{\mathbb{R}^{d+1}}\tilde{g}_{jk}\partial_{k}\varphi\partial_{j}(\partial_{l}\varphi\partial_{l}\varphi)|f|^{2}\,\dd x\dd t\\
=&2\mu\int_{\mathbb{R}^{d+1}}\tilde{g}_{jk}\partial_{k}\varphi\delta_{jl}\partial_{l}\varphi|f|^{2}\,\dd x\dd t\\
=&2\mu\int_{\mathbb{R}^{d+1}}\tilde{g}_{jk}\partial_{k}\varphi\partial_{j}\varphi|f|^{2}\,\dd x\dd t
\geq \int_{\mathbb{R}^{d+1}}2c\mu|\nabla\varphi|^{2}|f|^{2}\,\dd x\dd t.
\end{align}

Since $\varepsilon_{0}$ is fixed and sufficiently small, both \eqref{variable1} and \eqref{variable2} are nonnegative. More precisely,
\begin{equation}
2c + 2c_d - c_d(1+\Lambda)\sup_{x'\in\mathbb{R}^{d-1}} |x'|\,|\nabla \tilde{G}(x')|\ge 0 .
\end{equation}
Therefore, these terms do not produce any loss in the estimate.

\medskip
\noindent
\textbf{Estimate of $Z_3$.}
\begin{align}
Z_{3}
&= - \int_{\mathbb{R}^{d+1}} \mathcal{Q}^{2}\varphi\, |f|^{2}\,\dd x\,\dd t
\nonumber\\
&= -2\mu \int_{\mathbb{R}^{d+1}}
\Bigl(
\partial_{l} g_{lm}\,\partial_{m}\partial_{k} g_{kj}\,
(x_{j}+Rt(1-t)\mathbf e_{1}\delta_{1j})
\nonumber\\
&\qquad
+ g_{lm}\,\partial_{l}\partial_{m}\partial_{k} g_{kj}\,
(x_{j}+Rt(1-t)\mathbf e_{1}\delta_{1j})
\nonumber\\
&\qquad
+ \partial_{l} g_{lm}\,\partial_{k} g_{kj}\,\delta_{jm}
+ g_{lm}\,\partial_{m}\partial_{k} g_{kj}\,\delta_{jl}
\nonumber\\
&\qquad
+ g_{lm}\,\partial_{l}\partial_{k} g_{kj}\,\delta_{jm}
+ \partial_{l} g_{lm}\,\partial_{m} g_{kk}
+ g_{lm}\,\partial_{l}\partial_{m} g_{kk}
\Bigr)
|f|^{2}\,\dd x\,\dd t
\nonumber\\
&\ge
-2 c_{d}\mu
\Bigl(
\sup_{x'\in\mathbb{R}^{d-1}}
\bigl(|x'|\,|\nabla G(x')|\bigr)\,
\|\nabla^{2} G\|_{L^{\infty}}
\nonumber\\
&\qquad
+ \sup_{x'\in\mathbb{R}^{d-1}}
|x'|\,|\nabla^{3} G(x')|
+ 2\|\nabla G\|^{2}_{L^{\infty}}
+ 2\Lambda \|\nabla^{2} G\|_{L^{\infty}}
\Bigr)
\int_{\mathbb{R}^{d+1}} |f|^{2}\,\dd x\,\dd t .
\label{bilaplace estimate}
\end{align}

\medskip
\noindent
\textbf{Estimate of $Z_4$ (magnetic term).}
Using
\[
\|(G\mathbf e_{1})^{\top} B\|_{L^\infty_{t,x}} \le \varepsilon_{1},
\]
we obtain
\begin{align}
Z_{4}
&= -8\mu \int_{\mathbb{R}^{d+1}}
g_{jk} g_{lm} (x_{l}+Rt(1-t)\mathbf e_{1}\delta_{1l}) B_{mk}\,
f\,\overline{D_{j} f}
\,\dd x\,\dd t
\nonumber\\
&= -8\mu \int_{\mathbb{R}^{d+1}}
(Gx)^{\top}BG f\, \overline{\nabla_{A} f}
\,\dd x\,\dd t
- 8\mu \int_{\mathbb{R}^{d+1}}
Rt(1-t)(G\mathbf e_{1})^{\top}BGf\,
\overline{\nabla_{A}f}\,\dd x\,\dd t
\nonumber\\
&= -8\mu \int_{\mathbb{R}^{d+1}}
(Gx)^{\top}B f\,
\overline{\Bigl(G\nabla_{A} f
+ i \tfrac{R(1-2t)}{2} G\mathbf e_{1} f\Bigr)}
\,\dd x\,\dd t
+ 8i\mu \int_{\mathbb{R}^{d+1}}
(Gx)^{\top}B \,\tfrac{R(1-2t)}{2}\, G\mathbf e_{1}\, |f|^{2}
\,\dd x\,\dd t
\nonumber\\
&\qquad
- 8\mu \int_{\mathbb{R}^{d+1}}
Rt(1-t)(G\mathbf e_{1})^{\top}BGf\,
\overline{\nabla_{A}f}\,\dd x\,\dd t
\nonumber\\
&\ge
-16\mu \|(Gx)^{\top}B\|_{L^{\infty}}^{2}
\int_{\mathbb{R}^{d+1}} |f|^{2}\,\dd x\,\dd t
- \mu \int_{\mathbb{R}^{d+1}}
\Bigl|G\nabla_{A} f
+ i \tfrac{R(1-2t)}{2} \mathbf e_{1} f\Bigr|^{2}
\,\dd x\,\dd t
\nonumber\\
&\qquad
-\Bigl(
64\mu R^2\|(G\mathbf e_{1})^{\top}B\|_{L^{\infty}_{t,x}}
+4\mu R\|(Gx)^{\top}B\|_{L^{\infty}_{t,x}}
\Bigr)
\int_{\mathbb{R}^{d+1}}|f|^{2}\,\dd x\,\dd t
-\mu\int_{\mathbb{R}^{d+1}}|G\nabla_{A}f|^{2}\,\dd x\,\dd t
\nonumber\\
&\ge
-16\mu \|(Gx)^{\top}B\|_{L^{\infty}}^{2}
\int_{\mathbb{R}^{d+1}} |f|^{2}\,\dd x\,\dd t
- 2\mu \int_{\mathbb{R}^{d+1}}
\Bigl|G\nabla_{A} f
+ i \tfrac{R(1-2t)}{2} \mathbf e_{1} f\Bigr|^{2}
\,\dd x\,\dd t
\nonumber\\
&\qquad
-\Bigl(
68\mu R^2\|(G\mathbf e_{1})^{\top}B\|_{L^{\infty}_{t,x}}
+4\mu R\|(Gx)^{\top}B\|_{L^\infty_{t,x}}
\Bigr)
\int_{\mathbb{R}^{d+1}}|f|^{2}\,\dd x\,\dd t .
\label{magnetic estimate}
\end{align}

\textbf{Estimate of $Z_{6}$, $Z_{7}$, and $Z_{8}$.}
Hypothesis \ref{assm2} implies that only the indices $j,k,l,m\ge 2$ contribute. Hence
\begin{align}
&Z_{6}+Z_{7}+Z_{8}
\\
=&
2\int_{\mathbb{R}^{d+1}}
g_{jk}\,\partial_{k} g_{lm}\,\partial_{l}\varphi\,
D_{m}f\,\overline{D_{j}f}\,\dd x\dd t
-2\int_{\mathbb{R}^{d+1}}
g_{lm}\,\partial_{l}\varphi\,\partial_{m} g_{jk}\,
D_{k}f\,\overline{D_{j}f}\,\dd x\dd t
\nonumber\\
&\quad
+2\int_{\mathbb{R}^{d+1}}
\partial_{j} g_{lm}\,\partial_{l}\varphi\, g_{jk}\,
D_{k}f\,\overline{D_{m}f}\,\dd x\dd t
\nonumber\\
=&
\sum_{j,k,l,m\ge 2}
\Bigg(
2\int_{\mathbb{R}^{d+1}}
g_{jk}\,\partial_{k} g_{lm}\,\partial_{l}\varphi\,
D_{m}f\,\overline{D_{j}f}\,\dd x\dd t
\nonumber\\
&\qquad
-2\int_{\mathbb{R}^{d+1}}
g_{lm}\,\partial_{l}\varphi\,\partial_{m} g_{jk}\,
D_{k}f\,\overline{D_{j}f}\,\dd x\dd t
+2\int_{\mathbb{R}^{d+1}}
\partial_{j} g_{lm}\,\partial_{l}\varphi\, g_{jk}\,
D_{k}f\,\overline{D_{m}f}\,\dd x\dd t
\Bigg)
\nonumber\\
\ge&
-6\mu c_{d}
\sup_{x'\in\mathbb{R}^{d-1}}
\bigl(|x'|\,|\nabla \tilde{G}(x')|\bigr)
\int_{\mathbb{R}^{d+1}}
|\nabla_{A}f|^{2}\,\dd x\dd t
\\
\ge&
-12\mu c_{d}
\sup_{x'\in\mathbb{R}^{d-1}}
\bigl(|x'|\,|\nabla \tilde{G}(x')|\bigr)
\int_{\mathbb{R}^{d+1}}
\Bigl(
\bigl|G\nabla_{A}f
+ i \tfrac{R(1-2t)}{2} \textbf{e}_{1} f\bigr|^{2}
+ \bigl|\tfrac{R(1-2t)}{2} \textbf{e}_{1} f\bigr|^{2}
\Bigr)
\,\dd x\dd t
\nonumber\\
\ge&
-12\mu c_{d}
\sup_{x'\in\mathbb{R}^{d-1}}
\bigl(|x'|\,|\nabla \tilde{G}(x')|\bigr)
\int_{\mathbb{R}^{d+1}}
\bigl|G\nabla_{A}f
+ i \tfrac{R(1-2t)}{2} \textbf{e}_{1} f\bigr|^{2}
\,\dd x\dd t
\nonumber\\
&\quad
-48\mu c_{d}
\sup_{x'\in\mathbb{R}^{d-1}}
\bigl(|x'|\,|\nabla \tilde{G}(x')|\bigr)
R^{2}
\int_{\mathbb{R}^{d+1}}
|f|^{2}\,\dd x\,\dd t .
\label{strange term}
\end{align}

Collecting all the bounds for the terms $Z_{\ell}$, we arrive at
\begin{align}
    &\int_{\mathbb{R}^{d+1}}(\partial_{t}\mathcal{S}+[\mathcal{S},\mathcal{A}])f\bar{f}\,\dd x\dd t\\
    \geq&
    \Bigg(\frac{(\varepsilon+N(\varepsilon_{0}+\varepsilon_{1})\mu^{2})R^{2}}{8\mu}
    -48R^{2}\mu c_{d}\sup_{x'\in\mathbb{R}^{d-1}}|x'||\nabla\tilde{G}(x')|
    -68\mu R^{2}\|(G\textbf{e}_{1})^{\top}B\|_{L^{\infty}_{t,x}}\nonumber\\
    &\hspace{6ex}
    -16\mu\|(Gx)^{\top}B\|^{2}_{L^{\infty}}
    -2c_{d}\mu\Big(\sup_{x'\in\mathbb{R}^{d-1}}(|x'||\nabla G(x')|)\|\nabla^{2}G\|_{L^{\infty}}+\sup_{x'\in\mathbb{R}^{d-1}}|x'||\nabla^{3}G(x')|\nonumber\\
    &\hspace{16ex}+2\|\nabla G\|^{2}_{L^{\infty}}+2\Lambda\|\nabla^{2}G\|_{L^{\infty}}\Big)\Bigg)\int_{\mathbb{R}^{d+1}}|f|^{2}\,\dd x\dd t\nonumber\\
    &+(7\mu-12\mu c_{d}\sup_{x'\in\mathbb{R}^{d-1}}|x^{\prime}||\nabla\tilde{G}(x^{\prime})|)\int_{\mathbb{R}^{d+1}}\Big|G\nabla_{A}f+i\frac{\textbf{e}_{1}(1-2t)R}{2}f\Big|^{2}\,\dd x\dd t.
\end{align}

Now observe that
\begin{align}
   & \frac{N(\varepsilon_{0}+\varepsilon_{1})\mu^{2}R^{2}}{8\mu}-68\mu R^2\|(G\textbf{e}_{1})^{\top}B\|_{L^{\infty}_{t,x}}-48c_{d}\mu R^{2}\sup_{x'\in\mathbb{R}^{d-1}}|x'||\nabla\tilde{G}(x')|\\
\geq&
\Big(\frac{N(\varepsilon_{0}+\varepsilon_{1})}{8}-48c_{d}\varepsilon_{0}-68\varepsilon_{1}\Big)\mu R^{2}\geq0,
\end{align}
provided
\[
N=\max(384 c_{d},544).
\]
Thus, in order to preserve positivity, it remains to require
\begin{align}
    &\frac{\varepsilon R^{2}}{8\mu}-4\mu R\|(Gx)^{\top}B\|_{L^{\infty}_{t,x}}-16\mu\|(Gx)^{\top}B\|^{2}_{L^{\infty}}\\
    &\hspace{8ex}
    -2c_{d}\mu\Big(\sup_{x'\in\mathbb{R}^{d-1}}(|x'||\nabla G(x')|)\|\nabla^{2}G\|_{L^{\infty}}+\sup_{x'\in\mathbb{R}^{d-1}}|x'||\nabla^{3}G(x')|+2\|\nabla G\|^{2}_{L^{\infty}}+2\Lambda\|\nabla^{2}G\|_{L^{\infty}}\Big)\nonumber\\
    \geq&\frac{\varepsilon R^{2}}{16\mu},
\end{align}
which is equivalent to
\begin{align}
    &\frac{\varepsilon R^{2}}{16\mu}\geq16\mu\|(Gx)^{\top}B\|^{2}_{L^{\infty}}+4\mu R\|(Gx)^{\top}B\|_{L^\infty}\\
    &\hspace{8ex}
    +2c_{d}\mu\Big(\sup_{x'\in\mathbb{R}^{d-1}}(|x'||\nabla G(x')|)\|\nabla^{2}G\|_{L^{\infty}}+\sup_{x'\in\mathbb{R}^{d-1}}|x'||\nabla^{3}G(x')|+2\|\nabla G\|^{2}_{L^{\infty}}+2\Lambda\|\nabla^{2}G\|_{L^{\infty}}\Big).
\end{align}
Rearranging terms gives a quadratic condition in $R$:
\begin{align}
&R^{2}-\Big(\frac{64\mu^{2}}{\varepsilon}\|(Gx)^{\top}B\|_{L^{\infty}_{t,x}}\Big)R\\
\geq&
\frac{256\mu^{2}}{\varepsilon}\|(Gx)^{\top}B\|_{L^{\infty}}^{2}
+\frac{32c_{d}\mu^{2}}{\varepsilon}\Big(\sup_{x'\in\mathbb{R}^{d-1}}(|x'||\nabla G(x')|)\|\nabla^{2}G\|_{L^{\infty}}+\sup_{x'\in\mathbb{R}^{d-1}}|x'||\nabla^{3}G(x')|\nonumber\\
&\hspace{20ex}+2\|\nabla G\|^{2}_{L^{\infty}}+2\Lambda\|\nabla^{2}G\|_{L^{\infty}}\Big)
:=\mathcal{C}_{3}\frac{\mu^{2}}{\varepsilon}.
\end{align}
Hence, for
\[
R>\max\Big(\frac{128\mu^{2}}{\varepsilon}\|(Gx)^{\top}B\|_{L^\infty},(2\mathcal{C}_{3}\frac{\mu^{2}}{\varepsilon})^{2}\Big),
\]
we obtain the desired estimate,
\begin{align}
    &\sqrt{\frac{\varepsilon R^{2}}{16\mu}}\Big\|e^{\mu|x+Rt(1-t)\textbf{e}_{1}|^{2}-\frac{(1+\varepsilon+N(\varepsilon_{0}+\varepsilon_{1})\mu^{2})R^{2}t(1-t)}{16\mu}}h\Big\|_{L^{2}([0,1]_{t}\times\mathbb{R}^{d})}\\
    \leq&
    \Big\|e^{\mu|x+Rt(1-t)\textbf{e}_{1}|^{2}-\frac{(1+\varepsilon+N(\varepsilon_{0}+\varepsilon_{1})\mu^{2})R^{2}t(1-t)}{16\mu}}(\partial_{t}-i\operatorname{div}_{A}(G\nabla_{A}))h\Big\|_{L^{2}([0,1]_{t}\times\mathbb{R}^{d})},
\end{align}
where
\begin{align}
\mathcal{C}_{3}
=&256\|(Gx)^{\top}B\|_{L^{\infty}}^{2}
+32c_{d}\Big(\sup_{x'\in\mathbb{R}^{d-1}}(|x'||\nabla G(x')|)\|\nabla^{2}G\|_{L^{\infty}}+\sup_{x'\in\mathbb{R}^{d-1}}|x'||\nabla^{3}G(x')|\nonumber\\
&\hspace{12ex}+2\|\nabla G\|^{2}_{L^{\infty}}+2\Lambda\|\nabla^{2}G\|_{L^{\infty}}\Big).
\end{align}
This completes the proof.
\end{proof}

\begin{proof}[Proof of Theorem \ref{schrodinger}]
First, by logarithmic convexity (Lemma \ref{Log-convex-special}), we have
\begin{align}
N_{\gamma}
:=
\sup_{0\le t\le 1}
\|e^{\gamma |x|^{2}}u(t)\|_{L^{2}(\mathbb{R}^{d})}
+
\|\sqrt{t(1-t)}\,e^{\gamma |x|^{2}}\nabla_{A}u\|_{L^{2}([0,1]\times\mathbb{R}^{d})}
< \infty .
\label{N gamma}
\end{align}

We choose
\[
h(x,t)=\theta_M(x)\,\zeta_R(t)\,u(x,t),
\]
where $\theta_M \in C_0^\infty(\mathbb{R}^d)$ satisfies
\[
\theta_M(x)=1 \quad \text{for } |x|\le M,\qquad \theta_M(x)=0 \quad \text{for } |x|\ge 2M,
\]
and
\[
|\partial^k \theta_M(x)| \le \frac{C_k}{M^k},
\]
with $M>0$ to be chosen later. Moreover, let $\zeta_R \in C_0^\infty(0,1)$ satisfy
\[
0\le \zeta_R \le 1,
\qquad
\zeta_R(t)=1
\quad \text{for } t\in \Big[\tfrac{1}{R},\,1-\tfrac{1}{R}\Big],
\]
and
\[
\zeta_R(t)=0
\quad \text{for } t\in
\Big[0,\,\tfrac{1}{2R}\Big]\cup
\Big[1-\tfrac{1}{2R},\,1\Big].
\]
In addition,
\[
|\partial^k \zeta_R(t)| \le C_k R^k .
\]

A direct computation yields
\begin{align}
&\bigl(\partial_{t}
- i\,\operatorname{div}_{A}(G\nabla_{A}\cdot)\bigr) h
\\
={}&
\theta_M(x)\,(\partial_t \zeta_R)\,u
+ \theta_M \zeta_R\,\partial_t u
- i \zeta_R\,\partial_j g_{jk}\,\partial_k \theta_M\,u
- i \zeta_R\, g_{jk}\,\partial_j \partial_k \theta_M\,u
\nonumber\\
&- i \zeta_R\, g_{jk}\,\partial_k \theta_M\, D_j u
- i \zeta_R\, g_{jk}\,\partial_j \theta_M\, D_k u
- i \theta_M \zeta_R\, D_j (g_{jk} D_k u)
\nonumber\\
={}&
i V h
+ \theta_M(x)\,(\partial_t \zeta_R)\,u
- i \zeta_R\,\partial_j g_{jk}\,\partial_k \theta_M\,u
- i \zeta_R\, g_{jk}\,\partial_j \partial_k \theta_M\,u
\nonumber\\
&- i \zeta_R\, g_{jk}\,\partial_k \theta_M\, D_j u
- i \zeta_R\, g_{jk}\,\partial_j \theta_M\, D_k u .
\label{ugly terms}
\end{align}

We now estimate the Carleman weight on the supports of the cutoff terms. Assume that
\[
\mu \le \frac{\gamma}{1+\varepsilon}
\]
for some $\varepsilon>0$. On the support of $\theta_M(x)\partial_t\zeta_R\cdot u$, we have
\[
t(1-t)\sim \frac{1}{R}\Bigl(1-\frac{1}{R}\Bigr).
\]
By Young's inequality with parameter $\varepsilon$,
\begin{align}
&\mu\bigl|x+Rt(1-t)\textbf{e}_{1}\bigr|^{2}
-\frac{R^{2}\bigl(1+\varepsilon+N(\varepsilon_{0}+\varepsilon_{1})\mu^{2}\bigr)t(1-t)}{16\mu}
\nonumber\\
\le{}&
\mu|x|^{2}
+2\mu R\, x\cdot \textbf{e}_{1}\, t(1-t)
+\mu R^{2} t^{2}(1-t)^{2}
\nonumber\\
\le{}&
\mu|x|^{2}
+\mu\varepsilon |x|^{2}
+\mu\frac{1}{\varepsilon} R^{2} t^{2}(1-t)^{2}
+\mu R^{2} t^{2}(1-t)^{2}
\nonumber\\
\le{}&
\mu(1+\varepsilon)|x|^{2}
+\mu\Bigl(\frac{1}{\varepsilon}+1\Bigr)
R^{2}\frac{1}{R^{2}}
\Bigl(1-\frac{1}{R}\Bigr)^{2}
\nonumber\\
\le{}&
\mu(1+\varepsilon)|x|^{2}
+\mu\Bigl(\frac{1}{\varepsilon}+1\Bigr)
\nonumber\\
\le{}&
\gamma |x|^{2}
+\frac{\gamma}{\varepsilon}.
\label{support1}
\end{align}

Now denote
\[
\begin{aligned}
\mathcal{I} := {} &
- i \zeta_R(t)\,\partial_j g_{jk}\,\partial_k \theta_M\,u
- i \zeta_R(t)\, g_{jk}\,\partial_j \partial_k \theta_M\,u \\
& - i \zeta_R(t)\, g_{jk}\,\partial_k \theta_M\, D_j u
- i \zeta_R(t)\, g_{jk}\,\partial_j \theta_M\, D_k u .
\end{aligned}
\]
Then
\[
\operatorname{supp} \mathcal{I}
\subset
\Big[\tfrac{1}{2R},\,1-\tfrac{1}{2R}\Big]_t
\times \bigl( B_{2M} \setminus B_M \bigr).
\]
On this support, we have
\begin{align}
    &\mu|x+Rt(1-t)\textbf{e}_{1}|^{2}-\frac{R^{2}(1+\varepsilon+N(\varepsilon_{0}+\varepsilon_{1})\mu^{2})t(1-t)}{16\mu}\nonumber\\
\leq&
\mu|x|^{2}+\mu\varepsilon|x|^{2}+\mu\frac{1}{\varepsilon}R^{2}t^{2}(1-t)^{2}+\mu R^{2}t^{2}(1-t)^{2}\nonumber\\
\leq&
\mu(1+\varepsilon)|x|^{2}+\mu\Big(1+\frac{1}{\varepsilon}\Big)R^{2}\nonumber\\
\leq&
\gamma|x|^{2}+\frac{\gamma}{\varepsilon}R^{2}. \label{support2}
\end{align}

We now apply the Carleman estimate \eqref{carleman estimate for special designed test function} to
\[
h(x,t)=\theta_M(x)\,\zeta_R(t)\,u(x,t).
\]
Using \eqref{ugly terms}, together with the bounds \eqref{support1} and \eqref{support2} on the support of the corresponding error terms, we obtain
\begin{align}
    &R\sqrt{\frac{\varepsilon}{\mu}}\Big\|e^{\mu|x+Rt(1-t)\textbf{e}_{1}|^{2}-\frac{(1+\varepsilon+N\mu^{2}(\varepsilon_{0}+\varepsilon_{1}))t(1-t)}{16\mu}}h(x,t)\Big\|_{L^{2}([0,1]\times\mathbb{R}^{d})}\nonumber\\
 \leq&
 \Big\|e^{\mu|x+Rt(1-t)\textbf{e}_{1}|^{2}-\frac{(1+\varepsilon+N\mu^{2}(\varepsilon_{0}+\varepsilon_{1}))t(1-t)}{16\mu}}(\partial_{t}-i\operatorname{div}_{A}(G\nabla_{A}\cdot))h\Big\|_{L^{2}([0,1]\times\mathbb{R}^{d})}\nonumber\\
    \leq&
    \|V\|_{L^{\infty}([0,1]\times\mathbb{R}^{d})}\Big\|e^{\mu|x+Rt(1-t)\mathbf{e}_{1}|^{2}-\frac{(1+\varepsilon+N\mu^{2}(\varepsilon_{0}+\varepsilon_{1}))t(1-t)}{16\mu}}h\Big\|_{L^{2}([0,1]\times\mathbb{R}^{d})}\nonumber\\
    &+Re^{\frac{\gamma}{\varepsilon}}\sup_{0\leq t\leq1}\|e^{\gamma|x|^{2}}u(t)\|_{L^{2}_{x}}\nonumber\\
    &+M^{-1}e^{\frac{\gamma R^{2}}{\varepsilon}}\Big\|e^{\gamma|x|^{2}}(|u|+|\nabla_{A}u|)\Big\|_{L^{2}(\mathbb{R}^{d}\times[\frac{1}{2R},1-\frac{1}{2R}])}. \label{reduction to 0}
\end{align}

Choose $R$ sufficiently large so that
\[
R\sqrt{\frac{\varepsilon}{\mu}}
\ge
2\|V\|_{L^{\infty}([0,1]\times\mathbb{R}^{d})}.
\]
Under this condition, the first term on the right-hand side of \eqref{reduction to 0} can be absorbed into the left-hand side. Moreover, since the quantity in \eqref{N gamma} is finite, the third term on the right-hand side of \eqref{reduction to 0} tends to zero as $M\to\infty$, for every fixed $R$.

Now choose a positive function $\mathfrak{f}=\mathfrak{f}(\varepsilon)$ such that
\[
\mathfrak{f}<(1-\varepsilon^{2})^2-\frac{N(\varepsilon_0+\varepsilon_1)}{4},
\qquad
\mathfrak{f}(\varepsilon)\to0^+
\quad\text{as }\varepsilon\to0.
\]
Since $h\equiv u$ on
\[
B_{\frac{\mathfrak{f}(\varepsilon)R}{8}}\times\Big[\frac{1-\varepsilon}{2},\frac{1+\varepsilon}{2}\Big],
\]
we have
\begin{align}
    &\mu|x+Rt(1-t)\textbf{e}_{1}|^{2}-\frac{(1+\varepsilon+N(\varepsilon_{0}+\varepsilon_{1})\mu^{2})R^{2}t(1-t)}{16\mu}\nonumber\\
    \geq&
    \frac{R^{2}}{16\mu}\Big\{16\mu^{2}\big[t^{2}(1-t)^{2}+2\frac{x}{R}\textbf{e}_{1}t(1-t)+\frac{|x|^{2}}{R^{2}}\big]-(1+\varepsilon+N(\varepsilon_{0}+\varepsilon_{1})\mu^{2})\frac{1}{4}\Big\}\nonumber\\
    \geq&
    \frac{R^{2}}{16\mu}\Big\{16\mu^{2}\Big[\frac{1}{16}(1-\varepsilon^{2})^{2}-2\frac{\mathfrak{f}(\varepsilon)}{8}\frac{1}{4}\Big]-(1+\varepsilon+N(\varepsilon_{0}+\varepsilon_{1})\mu^{2})\frac{1}{4}\Big\}\nonumber\\
    \geq&
    \frac{R^{2}}{16\mu}\Big\{\mu^{2}\Big[(1-\varepsilon^{2})^{2}-\mathfrak{f}(\varepsilon)-\frac{N(\varepsilon_{0}+\varepsilon_{1})}{4}\Big]-\frac{1+\varepsilon}{4}\Big\}\nonumber\\
    =&
    \frac{R^{2}}{16\mu}\Big\{\mu^{2}\Big[1-2\varepsilon^{2}+\varepsilon^{4}-\mathfrak{f}(\varepsilon)-\frac{N(\varepsilon_{0}+\varepsilon_{1})}{4}\Big]-\frac{1+\varepsilon}{4}\Big\}.
\end{align}

We now choose $\mu$ so that
\begin{align}
    \mu^{2}>
    \frac{1}{4}\frac{1+\varepsilon}{1-\frac{N(\varepsilon_{0}+\varepsilon_{1})}{4}-2\varepsilon^{2}+\varepsilon^{4}-\mathfrak{f}(\varepsilon)}
    \geq
    \frac{1}{4}\frac{1}{1-\frac{N(\varepsilon_{0}+\varepsilon_{1})}{4}}.
\end{align}
Since $\varepsilon_{0}$ and $\varepsilon_{1}$ are small enough so that
\[
\frac{N(\varepsilon_{0}+\varepsilon_{1})}{4}\ll1,
\]
it follows that
\begin{align}
    \mu>\frac{1}{2}\sqrt{\frac{1}{1-\frac{N(\varepsilon_{0}+\varepsilon_{1})}{4}}}
    =\frac{1}{2}\Big(\sum_{k=0}^\infty\big(\frac{N(\varepsilon_{0}+\varepsilon_{1})}{4}\big)^{k}\Big)^{\frac{1}{2}}
    >\frac{1}{2}+\frac{N(\varepsilon_0+\varepsilon_1)}{16}. \label{formula of mu}
\end{align}
Thus, if
\[
\gamma>\frac{1}{2}+\frac{N(\varepsilon_{0}+\varepsilon_{1})}{16},
\]
then for
\[
R\geq2\sqrt{\frac{\mu}{\varepsilon}}\|V\|_{L^{\infty}([0,1]\times\mathbb{R}^{d})},
\]
we obtain
\begin{align}
    R\sqrt{\frac{\varepsilon}{\mu}}e^{R^{2}\mathcal{C}_{\varepsilon,\mu}}\|u\|_{L^{2}(B_{\frac{\mathfrak{f}(\varepsilon)R}{8}}\times[\frac{1-\varepsilon}{2},\frac{1+\varepsilon}{2}])}
    \leq
    Re^{\frac{\gamma}{\varepsilon}}\sup_{0\leq t\leq1}\|e^{\gamma|x|^{2}}u(t)\|_{L_{x}^{2}}, \label{reduction step at last}
\end{align}
where
\[
\mathcal{C}_{\varepsilon,\mu}=\frac{1}{16\mu}\Big\{\mu^{2}\Big[1-2\varepsilon^{2}+\varepsilon^{4}-\mathfrak{f}(\varepsilon)-\frac{N(\varepsilon_{0}+\varepsilon_{1})}{4}\Big]-\frac{1+\varepsilon}{4}\Big\}.
\]
Letting $R\to\infty$, we conclude that
\[
u\equiv0
\qquad
\text{on }
\mathbb{R}^{d}\times\Big[\frac{1-\varepsilon}{2},\frac{1+\varepsilon}{2}\Big].
\]

It remains to prove that $u(0)\equiv0$. Once this is known, the uniqueness statement $u\equiv0$ on $[0,1]$ follows from the Duhamel formula and Gronwall's inequality.

Using the condition \eqref{syl} in Hypothesis \ref{assum3} and following closely to Lemma 3.2 in \cite{BFGRV-JFA}, we obtain
\begin{equation}
    \|u(0)\|_{L^{2}}\sim_{\mathcal{C}_{V}}\|u(t)\|_{L^{2}}, \label{key relationship for last step}
\end{equation}
where
\[
\mathcal{C}_{V}=e^{\sup_{0\leq t\leq1}\|\operatorname{Im}V_2\|_{L^{\infty}}}.
\]
Moreover, \eqref{N gamma} yields
\begin{equation}
    N_{\gamma}\geq \|e^{\gamma|x|^{2}}u(t)\|_{L^{2}(\mathbb{R}^{d})}\geq e^{\gamma\frac{R^{2}}{64}}\|u\|_{L^{2}(\mathbb{R}^{d}\backslash B_{\frac{R}{8}})}
    \quad\Rightarrow\quad
    e^{-\gamma\frac{R^{2}}{64}}N_{\gamma}\geq\|u\|_{L^{2}(\mathbb{R}^{d}\backslash B_{\frac{R}{8}})}.
\end{equation}
Hence
\begin{align}
    \|u(t)\|_{L^{2}(\mathbb{R}^{d})}
    \leq
    \|u(t)\|_{L^{2}(B_{\frac{R}{8}})}+\|u(t)\|_{L^{2}(\mathbb{R}^{d}\backslash B_{\frac{R}{8}})}
    \leq
    \|u(t)\|_{L^{2}(B_{\frac{R}{8}})}+e^{-\gamma\frac{R^{2}}{64}}N_{\gamma}. \label{small domain control large domain}
\end{align}

Integrating over the time interval $\big[\frac{1-\varepsilon}{2},\frac{1+\varepsilon}{2}\big]$, and using \eqref{key relationship for last step}, \eqref{small domain control large domain}, and \eqref{reduction step at last}, we obtain
\begin{align}
   \varepsilon R\sqrt{\frac{\varepsilon}{\mu}}e^{R^{2}\mathcal{C}_{\varepsilon,\mu}}\|u(0)\|_{L^{2}(\mathbb{R}^{d})}
   \sim_{\mathcal{C}_{V}}
   &R\sqrt{\frac{\varepsilon}{\mu}}e^{R^{2}\mathcal{C}_{\varepsilon,\mu}}\|u(t)\|_{L^{2}([\frac{1-\varepsilon}{2},\frac{1+\varepsilon}{2}]\times\mathbb{R}^{d})}\nonumber\\
    \leq&
    R\sqrt{\frac{\varepsilon}{\mu}}e^{R^{2}\mathcal{C}_{\varepsilon,\mu}}\|u\|_{L^{2}([\frac{1-\varepsilon}{2},\frac{1+\varepsilon}{2}]\times B_{\frac{R}{8}})}\nonumber\\
    &+\varepsilon e^{-\gamma\frac{R^{2}}{64}}N_{\gamma}R\sqrt{\frac{\varepsilon}{\mu}}e^{R^{2}\mathcal{C}_{\varepsilon,\mu}}\nonumber\\
    \leq&
    Re^{\frac{\gamma}{\varepsilon}}\sup_{0\leq t\leq1}\|e^{\gamma|x|^{2}}u(t)\|_{L^{2}(\mathbb{R}^{d})}\nonumber\\
    &+\varepsilon e^{-\gamma\frac{R^{2}}{64}}N_{\gamma}R\sqrt{\frac{\varepsilon}{\mu}}e^{R^{2}\mathcal{C}_{\varepsilon,\mu}}.
\end{align}
Therefore,
\begin{align}
    \|u(0)\|_{L^{2}(\mathbb{R}^{d})}
    \leq
    \mathfrak{C}_{\mu,\varepsilon}e^{-R^{2}\mathcal{C}_{\mu,\varepsilon}}+ e^{-\gamma\frac{R^{2}}{64}}N_{\gamma},
    \qquad \forall R>0,
\end{align}
and letting $R\to\infty$ gives $u(0)\equiv0$.
\end{proof}

\section{The proof of Theorem \ref{parabolic}: the parabolic case}
In this section, we prove the unique continuation property for the variable-coefficient heat equation.  

To achieve this, we first establish a Carleman estimate for the case $a=1, b=0$. 
\begin{proposition}[Carleman estimate]
   Assume that $d\geq2$, and the magnetic field $B$ satisfies
 \begin{equation}
     \|(G\textbf{e}_1)^{\top}B\|_{L^{\infty}_{x}}\leq\varepsilon_1,\, (Gx)^{\top}B\in L_{x}^{\infty}.
 \end{equation}

Then, for any $\varepsilon>0$, $\mu>0$, $h=h(x,t)\in C_{0}^{\infty}(\mathbb{R}_{x}^{d}\times[0,1]_{t})$, and $R>\mathcal{C}_{3}\mu\varepsilon^{-\frac{1}{2}}$, the Carleman inequality holds:
    \begin{align}
&\hspace{5ex}R\sqrt{\frac{\varepsilon}{16\mu}}\Big\|e^{\mu|x+Rt(1-t)\textbf{e}_{1}|^{2}+\frac{R^{2}t(1-t)(1-2t)}{6}-\frac{(1+\varepsilon+N\mu^{2}\varepsilon_{1})R^{2}t(1-t)}{16\mu}}h\Big\|_{L^{2}([0,1]_{t}\times\mathbb{R}^{d})}\\
&\leq\Big\|e^{\mu|x+Rt(1-t)\textbf{e}_{1}|^{2}+\frac{R^{2}t(1-t)(1-2t)}{6}-\frac{(1+\varepsilon+N\mu^{2}\varepsilon_{1})R^{2}t(1-t)}{16\mu}}(\partial_{t}-\operatorname{div}_{A}(G\nabla_{A}))h\Big\|_{L^{2}([0,1]_{t}\times\mathbb{R}^{d})}\label{prop-Carleman for parabolic}
\end{align}

\end{proposition}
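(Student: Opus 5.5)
We follow the scheme of the Schr\"odinger Carleman estimate, now with $a=1$, $b=0$. Set
\[
\varphi(x,t)=\mu|x+Rt(1-t)\textbf{e}_{1}|^{2}+\frac{R^{2}t(1-t)(1-2t)}{6}-\frac{(1+\varepsilon+N\varepsilon_{1}\mu^{2})R^{2}t(1-t)}{16\mu},
\]
and write $h=e^{-\varphi}f$. Then $e^{\varphi}(\partial_t-\operatorname{div}_A(G\nabla_A\cdot))h=(\partial_t-\mathcal{S}-\mathcal{A})f$. Here $\mathcal{S}$ and $\mathcal{A}$ are given by \eqref{symmetry part} and \eqref{antisymmetry part} with $a=1$, $b=0$, so that $\mathcal{S}=G\nabla\varphi\cdot\nabla\varphi+\operatorname{div}_A(G\nabla_A\cdot)+\varphi_t$ and $\mathcal{A}=-\operatorname{div}(G\nabla\varphi)-2G\nabla\varphi\cdot\nabla_A$. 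As before, expanding the square and integrating by parts in $t$ gives
\[
\|(\partial_t-\mathcal{S}-\mathcal{A})f\|^2\ge\int_{\mathbb{R}^{d+1}}(\partial_t\mathcal{S}+[\mathcal{S},\mathcal{A}])f\bar f\,\dd x\dd t .
\]
The whole task is therefore a lower bound of this form by $\frac{\varepsilon R^2}{16\mu}\|f\|^2$.

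\textbf{Step 1: collect the terms.} Insert $\varphi$ into \eqref{St} and \eqref{formula of commutator} with $a=1$, $b=0$. Since $A$ is time independent, the $\partial_tA$ terms vanish, and we obtain:
\begin{itemize}
\item the cubic term $2G\nabla\varphi\cdot\nabla(G\nabla\varphi\cdot\nabla\varphi)$;
\item the terms $2G\nabla\varphi\cdot\nabla\partial_t\varphi+\partial_{tt}\varphi$;
\item the term $-\mathcal{Q}^2\varphi$;
\item the magnetic term;
\item the Hessian term $8\mu|G\nabla_Af|^2$;
\item the three $\partial G$ terms.
\end{itemize}
The cubic term is split as in \eqref{decomposition of G}. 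The flat piece $32\mu^3|x+Rt(1-t)\textbf{e}_1|^2|f|^2$ survives, and the pieces involving $\tilde G$ are nonnegative up to $\varepsilon_0$ losses, exactly as in \eqref{variable1}--\eqref{variable2}.

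Now compute the time terms with $y=x+Rt(1-t)\textbf{e}_1$. One has $\nabla\partial_t\varphi=2\mu R(1-2t)\textbf{e}_1$. Moreover, $\partial_{tt}\varphi$ contains $2\mu R^2(1-2t)^2-4\mu R\,\textbf{e}_1\cdot y$, the derivative $\partial_{tt}\bigl(R^2t(1-t)(1-2t)/6\bigr)=-R^2(1-2t)$, and $\frac{(1+\varepsilon+N\varepsilon_1\mu^2)R^2}{8\mu}$. The cross term $2G\nabla\varphi\cdot\nabla\partial_t\varphi=8\mu^2R(1-2t)(G y)\cdot\textbf{e}_1$ is where the heat case differs from the Schr\"odinger case. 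Under Hypothesis \ref{assm2} we have $G\textbf{e}_1=\textbf{e}_1$, so this cross term equals $8\mu^2R(1-2t)y_1$.

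We complete the square in $y_1$ against $32\mu^3|y|^2$, absorbing the linear terms $8\mu^2R(1-2t)y_1-4\mu R y_1$. The cubic correction $\frac{R^2t(1-t)(1-2t)}{6}$ in the weight is designed precisely so that its second time derivative, together with $2\mu R^2(1-2t)^2$, cancels the negative remainder $-\frac{R^2(1-2t)^2}{2}$ produced by the square. This should leave a nonnegative square plus $\frac{(1+\varepsilon+N\varepsilon_1\mu^2)R^2}{8\mu}\|f\|^2$ minus the $\frac{R^2}{8\mu}$ absorbed by the $-4\mu R y_1$ completion. I expect this bookkeeping to close the same way it does in the constant-coefficient computation of \cite{EKPV-JEMS}.

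\textbf{Step 2: gradient and $G$-error terms.} In the parabolic case there is no term $8\mu R\operatorname{Im}(\cdot)$, so the gradient terms are $8\mu\|G\nabla_Af\|^2$ minus the errors. The bounds \eqref{strange term} and the $\mathcal{Q}^2\varphi$ bound \eqref{bilaplace estimate} apply verbatim, and they cost only $\mu\varepsilon_0\|\nabla_Af\|^2$ and $\mu\,O(1)\|f\|^2$, with no $R^2$ factor. The point is that we do not need to shift $\nabla_Af$ by $i\frac{R(1-2t)}{2}\textbf{e}_1f$, so no $R^2\varepsilon_0$ loss appears. This is why $\varepsilon_0$ is absent from the weight.

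\textbf{Step 3: the magnetic term (main obstacle).} The magnetic term $-4\operatorname{Im}\int g_{jk}g_{lm}\partial_l\varphi B_{mk}f\overline{D_jf}$ splits into a part carrying $(Gx)^\top B$ and a part carrying $Rt(1-t)(G\textbf{e}_1)^\top B$. We apply Cauchy--Schwarz to both, absorbing the gradient into $\mu\|G\nabla_Af\|^2$. The bounded part costs $C\mu\|(Gx)^\top B\|_\infty^2\|f\|^2$. The $\textbf{e}_1$ part costs $\lesssim\mu R^2\varepsilon_1\|f\|^2$, and this is compensated by $\frac{N\varepsilon_1\mu R^2}{8}$ in the weight, since $N\ge544$.

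The remaining constants then require
\[
\frac{\varepsilon R^2}{16\mu}\ge C\mu\bigl(\|(Gx)^\top B\|_\infty^2+\Xi\bigr),
\]
with $\Xi$ as in \eqref{notation of Xi}. This holds for $R>\mathcal{C}_3\mu\varepsilon^{-1/2}$ and yields the lower bound $\frac{\varepsilon R^2}{16\mu}\|f\|^2$. Taking square roots and returning to $h$ gives \eqref{prop-Carleman for parabolic}.

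The delicate point is Step 1: one must verify that, after completing the squares, the time-derivative terms of the cubic correction exactly cancel the $(1-2t)^2R^2$ remainder. If the cancellation is off by a constant, I would adjust the coefficient $\frac16$ and recheck using the identity $G\textbf{e}_1=\textbf{e}_1$.
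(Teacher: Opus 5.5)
Your plan follows the paper's proof: the same weight, $\mathcal S,\mathcal A$ with $a=1$, $b=0$, the same splitting of the cubic term using $G\textbf{e}_1=\textbf{e}_1$, the same Cauchy--Schwarz on the magnetic term compensated by $N\varepsilon_1\mu^2$, and the same final condition on $R$. Your Steps 2 and 3 agree with the paper's treatment of the $\mathcal Q^2\varphi$, magnetic and $\partial G$ terms, including the point that no shift by $i\frac{R(1-2t)}{2}\textbf{e}_1 f$ is needed.

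\textbf{The gap is in Step 1}, which is the only computation specific to this proposition. The term $2G\nabla\varphi\cdot\nabla\partial_t\varphi$ appears twice. It appears once in $\mathcal S_t$ (see \eqref{St}), and once in $[\mathcal S,\mathcal A]$ through $[\partial_t\varphi,-2G\nabla\varphi\cdot\nabla_A]=2G\nabla\varphi\cdot\nabla\partial_t\varphi$. So the cross term is $4G\nabla\varphi\cdot\nabla\partial_t\varphi=16\mu^2R(1-2t)y_1$, not $8\mu^2R(1-2t)y_1$.

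With your coefficient the bookkeeping does not close. Complete the square of $32\mu^3|y|^2+\bigl(8\mu^2R(1-2t)-4\mu R\bigr)y_1$ and compare it with the $y$-independent part $2\mu R^2(1-2t)^2-R^2(1-2t)+\frac{R^2}{8\mu}$ of $\partial_{tt}\varphi$. The leftover is $\frac32\mu R^2(1-2t)^2-\frac12R^2(1-2t)$. Its minimum over $t$ is $-\frac{R^2}{24\mu}$ (for $\mu\ge\frac16$). This cannot be absorbed by the available $\frac{\varepsilon R^2}{8\mu}$ when $\varepsilon$ is small. Your fallback of changing the $\frac16$ is not available, because the weight is fixed by the statement.

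Your description of the mechanism is also inaccurate. First, $\partial_{tt}\frac{R^2t(1-t)(1-2t)}{6}=-R^2(1-2t)$ is linear in $1-2t$, so it cannot cancel a quadratic remainder. Second, the remainder you name is really $-\frac{\mu R^2(1-2t)^2}{2}$, which $2\mu R^2(1-2t)^2$ already dominates on its own. What actually happens, as in the paper, is that you complete the square with both linear terms at once. With $c=\frac{R(4\mu(1-2t)-1)}{16\mu^2}$ one has
\[
32\mu^3|y|^2+\bigl(16\mu^2R(1-2t)-4\mu R\bigr)y_1+2\mu R^2(1-2t)^2-R^2(1-2t)+\frac{R^2}{8\mu}=32\mu^3\,|y+c\,\textbf{e}_1|^2 .
\]
In other words, $32\mu^3c^2$ coincides exactly with the $y$-independent part of $\partial_{tt}\varphi$. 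The cubic correction supplies precisely the mixed product $-R^2(1-2t)$ between $4\mu(1-2t)$ and $-1$ inside $c^2$. Hence these terms sum to $32\mu^3\int|y+c\textbf{e}_1|^2|f|^2+\frac{(\varepsilon+N\varepsilon_1\mu^2)R^2}{8\mu}\int|f|^2$, with no loss.

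The paper's displayed $Z_2$ drops a factor $\mu$, but its completed square uses the correct coefficient $16\mu^2R(1-2t)$. With this correction, the rest of your argument goes through as in the paper.
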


\begin{proof}
Substituting $\varphi=\mu|x+Rt(1-t)\textbf{e}_{1}|^{2}+\frac{R^{2}t(1-t)(1-2t)}{6}-\frac{(1+\varepsilon+N\mu^{2}\varepsilon_{1})R^{2}t(1-t)}{16\mu}$ into the commutator identity \eqref{formula of commutator}, we have
\begin{align}
    &\int_{\mathbb{R}^{d+1}}(\partial_{t}\mathcal{S}+[\mathcal{S},\mathcal{A}])f\bar{f}\,\dd x\dd t\\=&\int_{\mathbb{R}^{d+1}}2\operatorname{Im}(G\partial_{t}A\nabla_{A}f)\bar{f}\,\dd x\dd t+\int_{\mathbb{R}^{d+1}}8R(1-2t)\textbf{e}_{1}\cdot G(2\mu(x+Rt(1-t)\textbf{e}_{1}))|f|^{2}\,\dd x\dd t\\&+\int_{\mathbb{R}^{d+1}}2G\nabla\varphi\cdot\nabla(G\nabla\varphi\cdot\nabla\varphi)|f|^{2}\,\dd x\dd t\\
    &-\int_{\mathbb{R}^{d+1}}\mathcal{Q}^{2}\varphi|f|^{2}\,\dd x\dd t-4\operatorname{Im}\int_{\mathbb{R}^{d+1}}g_{jk}g_{lm}\partial_{l}\varphi(\partial_{k}A_{m}-\partial_{m}A_{k})f\overline{D_{j}f}\,\dd x\dd t\\&+4\int_{\mathbb{R}^{d+1}} g_{jk}g_{lm}\partial_{k}\partial_{l}\varphi D_{m}f\overline{D_{j}f}\,\dd x\dd t\nonumber\\
    &+\int_{\mathbb{R}^{d+1}}2g_{jk}\partial_{k}g_{lm}\partial_{l}\varphi D_{m}f\overline{D_{j}f}\,\dd x\dd t-\int_{\mathbb{R}^{d+1}}2g_{lm}\partial_{l}\varphi\partial_{m}g_{jk}D_{k}f\overline{D_{j}f}\,\dd x\dd t \\&+\int_{\mathbb{R}^{d+1}}2\partial_{j}g_{lm}\partial_{l}\varphi g_{jk}D_{k}f\overline{D_{m}f}\,\dd x\dd t+\int_{\mathbb{R}^{d+1}}\varphi_{tt}|f|^{2}\,\dd x\dd t\\
    =:&\sum_{\ell=1}^{10}Z_{j}.
\end{align}


Since $A$ is independent of $t$, 
\begin{align}
    Z_{1}=0.
\end{align}
Observing that 
\begin{equation}
    (1,0,\dots,0)\begin{pmatrix}
    0&0&\dots&0\\0&\tilde{g}_{22}(x')&\dots&\tilde{g}_{2n}(x')\\\dots&\dots&\dots&\dots\\0&\tilde{g}_{n2}(x')&\dots&\tilde{g}_{nn}(x')
\end{pmatrix}\Bigg[\begin{pmatrix}
    x_{1}\\x_{2}\\ \dots\\x_{n}
\end{pmatrix}+Rt(1-t)\begin{pmatrix}
    1\\0\\\dots\\0
\end{pmatrix}\Bigg]=0,
\end{equation}
we deduce that 
\begin{align}
    Z_{2}=&\int_{\mathbb{R}^{d+1}}8R(1-2t)\textbf{e}_{1}G(2\mu(x+Rt(1-t)\textbf{e}_{1}))|f|^{2}\,\dd x\dd t
\\=&\int_{\mathbb{R}^{d+1}}8R(1-2t)\textbf{e}_{1}(I+\widetilde{G})(2\mu(x+Rt(1-t)\textbf{e}_{1}))|f|^{2}\,\dd x\dd t
\\=&\int_{\mathbb{R}^{d+1}}8R(1-2t)\textbf{e}_{1}I(2\mu(x+Rt(1-t)\textbf{e}_{1}))|f|^{2}\,\dd x\dd t
\\=&\int_{\mathbb{R}^{d+1}}16\mu R(1-2t)\textbf{e}_{1}\cdot(x+Rt(1-t)\textbf{e}_{1})|f|^{2}\,\dd x\dd t.\label{combi0}
\end{align}

For $Z_3$, invoking 
 \eqref{decomposition of G} into $Z_3$, we have
 \begin{align}
     2G\nabla\varphi\cdot\nabla(G\nabla\varphi\cdot\nabla\varphi)=&2\nabla\varphi\cdot\nabla(|\nabla\varphi|^{2})+2\nabla\varphi\cdot\nabla(\tilde{G}\nabla\varphi\cdot\nabla\varphi)\nonumber\\
     &+2\tilde{G}\nabla\varphi\cdot\nabla(\tilde{G}\nabla\varphi\cdot\nabla\varphi)+2\tilde{G}\nabla\varphi\cdot\nabla(|\nabla\varphi|^{2}).
 \end{align}
 Thus
 \begin{align}
     Z_{3}=&\int_{\mathbb{R}^{d+1}}\Big[2\nabla\varphi\cdot\nabla(|\nabla\varphi|^{2})|f|^{2}+2\nabla\varphi\cdot\nabla(\tilde{G}\nabla\varphi\cdot\nabla\varphi)|f|^{2}+2\tilde{G}\nabla\varphi\cdot\nabla(\tilde{G}\nabla\varphi\cdot\nabla\varphi)|f|^{2}\nonumber\\
     &+2\tilde{G}\nabla\varphi\cdot\nabla(|\nabla\varphi|^{2})|f|^{2}\Big]\,\dd x\dd t\\
     :=&Z_{3}^{1}+Z_{3}^{2}+Z_{3}^{3}+Z_{3}^{4}
 \end{align}
Since we have treated $Z_{3}^{2},Z_{3}^{3},Z_{3}^{4}$ in Schr\"odinger case and estimates are also valid in parabolic case, it remains to handle $Z_{3}^{1}$. By definition, we have 
\begin{align}
Z_{3}^{1}=2\int_{\mathbb{R}^{d+1}}\nabla\varphi\cdot\nabla(|\nabla\varphi|^{2})|f|^{2}\,\dd x\dd t=&4\int_{\mathbb{R}^{d+1}}\nabla\varphi\cdot\operatorname{Hess}(\varphi)\nabla\varphi|f|^{2}\,\dd x\dd t\\
=&32\mu^{3}\int_{\mathbb{R}^{d+1}}|x+Rt(1-t)\textbf{e}_{1}|^{2}|f|^{2}\,\dd x\dd t.\label{combi1}
\end{align}
By direct computation, 
\begin{align*}
    \partial_{t}\varphi&=2\mu(x+Rt(1-t)\textbf{e}_{1})\cdot R(1-2t)\textbf{e}_{1}+\frac{R^{2}}{6}(1-6t+6t^{2})-\frac{(1+\varepsilon+N\mu^{2}\varepsilon_{1})R^{2}(1-2t)}{16\mu},\\
    \partial_{tt}\varphi&=2\mu|R(1-2t)\textbf{e}_{1}|^{2}-4\mu R(x+Rt(1-t)\textbf{e}_{1})\cdot\textbf{e}_{1}+R^{2}(-1+2t)+\frac{R^{2}(1+\varepsilon+N\mu^{2}\varepsilon_{1})}{8\mu}.
\end{align*}
    Combining the analysis above, we compute $Z_{10}$ as follows:
\begin{align}
Z_{10}=&\int_{\mathbb{R}^{d+1}}\Big(2\mu|R(1-2t)\textbf{e}_{1}|^{2}-4\mu R(x+Rt(1-t)\textbf{e}_{1})\cdot\textbf{e}_{1}+R^{2}(-1+2t)\\&+\frac{R^{2}(1+\varepsilon+N\mu^{2}\varepsilon_{1})}{8\mu}\Big)|f|^{2}\,\dd x\dd t.\label{combi2}
\end{align}
Hence, we have
\begin{align}
&Z_{2}+Z_{3}^{1}+Z_{10}\\=&
32\mu^{3}\int_{\mathbb{R}^{d+1}}\Big|x+Rt(1-t)\textbf{e}_{1}+\frac{4(\mu(1-2t)-1)R}{16\mu^{2}}\Big|^{2}|f|^{2}\,\dd x\dd t+\int_{\mathbb{R}^{d+1}}\frac{R^{2}(\varepsilon+N\mu^{2}\varepsilon_{1})}{8\mu}|f|^{2}\,\dd x\dd t.
\end{align}
Indeed, by completing the square:
\begin{align}
    &32\mu^{3}|x+Rt(1-t)\textbf{e}_{1}|^{2}+16\mu R(1-2t)\textbf{e}_{1}\cdot(x+Rt(1-t)\textbf{e}_{1})+2\mu|R(1-2t)\textbf{e}_{1}|^{2}\\&-4\mu R(x+Rt(1-t)\textbf{e}_{1})\cdot\textbf{e}_{1}+R^{2}(2t-1)+\frac{R^{2}}{8\mu}\\
    =&32\mu^{3}\Big[|x+Rt(1-t)\textbf{e}_{1}|^{2}+\frac{4R\mu}{8\mu^{2}}(1-2t)\textbf{e}_{1}\cdot(x+Rt(1-t)\textbf{e}_{1})+\frac{16\mu^{2}|R(1-2t)|^{2}}{256\mu^{4}}\\
    &-\frac{R}{8\mu^{2}}(x+Rt(1-t)\textbf{e}_{1})\textbf{e}_{1}+\frac{8\mu}{256\mu^{4}}R^{2}(2t-1)+\frac{R^{2}}{256\mu^{4}}\Big]
    \end{align}
    \begin{align}
    =&32\mu^{3}\Big[|x+Rt(1-t)\textbf{e}_{1}|^{2}+\frac{4R\mu}{8\mu^{2}}(1-2t)\textbf{e}_{1}\cdot(x+Rt(1-t)\textbf{e}_{1})-\frac{R}{8\mu^{2}}(x+Rt(1-t)\textbf{e}_{1})\cdot\textbf{e}_{1}\\
    &+\big(\frac{4R\mu(1-2t)-R}{16\mu^{2}}\big)^{2}\Big]\\
    =&32\mu^{3}\Big[|x+Rt(1-t)\textbf{e}_{1}|^{2}+\frac{4R\mu(1-2t)-R}{8\mu^{2}}\textbf{e}_{1}\cdot(x+Rt(1-t)\textbf{e}_{1})+\big|\frac{4R\mu(1-2t)-R}{16\mu^{2}}\textbf{e}_{1}\big|^{2}\Big]\\
    =&32\mu^{3}\Big|x+Rt(1-t)\textbf{e}_{1}+\frac{4\big(\mu(1-2t)-1\big)R}{16\mu^{2}}\textbf{e}_{1}\Big|^{2}.
\end{align}
The estimate of $\int_{\mathbb{R}^{d+1}}\mathcal{Q}^{2}\varphi|f|^{2}$ is similar to \eqref{bilaplace estimate}.
\begin{align}
    Z_{4}=&-\int_{\mathbb{R}^{d+1}}\mathcal{Q}^{2}\varphi|f|^{2}\,\dd x\dd t\\
    =&-2\mu\int_{\mathbb{R}^{d}}\Big(\partial_{l}g_{lm}\partial_{m}\partial_{k}g_{kj}(x_{j}+Rt(1-t)\delta_{1j}\textbf{e}_{j})+g_{lm}\partial_{l}\partial_{m}\partial_{k}g_{kj}(x_{j}+Rt(1-t)\delta_{1j}\textbf{e}_{j})\\&+\partial_{l}g_{lm}\partial_{k}g_{kj}\delta_{jm}\nonumber\\&+g_{lm}\partial_{m}\partial_{k}g_{kj}\delta_{jl}
    +g_{lm}\partial_{l}\partial_{k}g_{kj}\delta_{jm}
    +\partial_{l}g_{lm}\partial_{m}g_{kk}+g_{lm}\partial_{l}\partial_{m}g_{kk}  \Big)|f|^{2}\,\dd x\dd t\nonumber\\ =&-2\mu\int_{\mathbb{R}^{d}}\Big(\sum_{j,k,l,m\geq2}\Big[\partial_{l}g_{lm}\partial_{m}\partial_{k}g_{kj}x_{j}+g_{lm}\partial_{l}\partial_{m}\partial_{k}g_{kj}x_{j}\Big]+\partial_{l}g_{lm}\partial_{k}g_{kj}\delta_{jm}\nonumber\\&+g_{lm}\partial_{m}\partial_{k}g_{kj}\delta_{jl}
    +g_{lm}\partial_{l}\partial_{k}g_{kj}\delta_{jm}
    +\partial_{l}g_{lm}\partial_{m}g_{kk}+g_{lm}\partial_{l}\partial_{m}g_{kk}  \Big)|f|^{2}\,\dd x\dd t\nonumber \\
\geq&-2\Big(\sup_{x^\prime\in\mathbb{R}^{d-1}}(|x'||\nabla G(x')|)\|\nabla^{2}G\|_{L^{\infty}}+\sup_{x^\prime\in\mathbb{R}^{d-1}}|x'||\nabla^{3}G(x')|+2\|\nabla G\|^{2}_{L^{\infty}}\\&+2\Lambda\|\nabla^{2}G\|_{L^{\infty}}\Big)\mu\int_{\mathbb{R}^{d+1}}|f|^{2}\,\dd x\,\dd t.
\end{align}

For terms involving the magnetic field,  we apply the Cauchy–Schwarz inequality directly,
\begin{align}
    Z_{5}=&-8\mu\int_{\mathbb{R}^{d+1}}g_{jk}g_{lm}(x_{l}+Rt(1-t)\textbf{e}_{1})B_{mk}f\overline{D_{j}f}\,\dd x\dd t\\
    =&-8\mu\int_{\mathbb{R}^{d+1}}(Gx)^{\top} BGf\overline{\nabla_{A}f}\,\dd x\dd t-8\mu R\int_{\mathbb{R}^{d+1}}t(1-t)(G\textbf{e}_{1})^{\top}BGf\overline{\nabla_{A}f}\,\dd x\dd t\\
    \geq&-16\mu\|(Gx)^{\top} B\|^{2}_{L^{\infty}}\int_{\mathbb{R}^{d+1}}|f|^{2}\,\dd x\dd t-64\mu R^2\|(G\textbf{e}_{1})^{\top}B\|_{L^{\infty}}\int_{\mathbb{R}^{d+1}}|f|^{2}\,\dd x\dd t\\
    &-2\mu\int_{\mathbb{R}^{d+1}}|G\nabla_{A}f|^{2}\,\dd x\dd t.
\end{align}

Unlike \eqref{strange term}, in the parabolic case we obtain a simpler estimate for $Z_{7}+Z_{8}+Z_{9}$
\begin{align}
    &Z_{7}+Z_{8}+Z_{9}\\=&2\int_{\mathbb{R}^{d+1}} g_{jk}\partial_{k}g_{lm}\partial_{l}\varphi D_{m}f\overline{D_{j}f}\,\dd x\dd t-2\int_{\mathbb{R}^{d+1}}g_{lm}\partial_{l}\varphi\partial_{m}g_{jk}D_{k}f\overline{D_{j}f}\,\dd x\dd t\\&+
\int_{\mathbb{R}^{d+1}}2\partial_{j}g_{lm}\partial_{l}\varphi g_{jk}D_{k}f\overline{D_{m}f}\,\dd x\dd t\end{align}\begin{align} =& \sum_{j,k,l,m\geq2}\Bigg(2\int_{\mathbb{R}^{d+1}} g_{jk}\partial_{k}g_{lm}\partial_{l}\varphi D_{m}f\overline{D_{j}f}\,\dd x\dd t-2\int_{\mathbb{R}^{d+1}}g_{lm}\partial_{l}\varphi\partial_{m}g_{jk}D_{k}f\overline{D_{j}f}\,\dd x\dd t\\&+
\int_{\mathbb{R}^{d+1}}2\partial_{j}g_{lm}\partial_{l}\varphi g_{jk}D_{k}f\overline{D_{m}f}\,\dd x\dd t \Bigg)\\
\geq&-6\mu c_{d}\sup_{x'\in\mathbb{R}^{d-1}}|x'||\nabla \tilde{G}(x')|\int_{\mathbb{R}^{d+1}}|G\nabla_{A}f|^{2}\,\dd x\dd t.
\end{align}

Then, together all the estimates for $Z_{\ell}$, we have 
\begin{align}
     &\int_{\mathbb{R}^{d+1}}(\partial_{t}\mathcal{S}+[\mathcal{S},\mathcal{A}])f\bar{f}\,\dd x\dd t\\
     \geq&\Bigg(\frac{R^{2}\varepsilon+R^{2}N\mu^{2}\varepsilon_{1}}{8\mu}-64\mu R^{2}\|(G\textbf{e}_{1})^{\top}B\|_{L^{\infty}_{t,x}}-16\mu\|(Gx)^{\top} B\|_{L^{\infty}}^{2}\\&-2c_{d}\mu\Big(\sup_{x'\in\mathbb{R}^{d-1}}(|x'||\nabla G(x')|)\|\nabla^{2}G\|_{L^{\infty}}+\sup_{x'\in\mathbb{R}^{d-1}}|x'||\nabla^{3}G(x')|+2\|\nabla G\|^{2}_{L^{\infty}}\\&+2\Lambda\|\nabla^{2}G\|_{L^{\infty}}\Big)\Bigg)\int_{\mathbb{R}^{d+1}}|f|^{2}\,\dd x\dd t\\
     &+\Big(6\mu-6\mu c_{d}\sup_{x'\in\mathbb{R}^{d-1}}|x'||\nabla\tilde{G}(x')|\Big)\int_{\mathbb{R}^{d+1}}|G\nabla_{A}f|^{2}\,\dd x\dd t.
\end{align}
To obtain the desired  estimate, we need
\begin{align}
    \frac{R^{2}N\mu^{2}\varepsilon_{1}}{8\mu}\geq64\mu R^{2}\varepsilon_{1}\Rightarrow N\geq512,
\end{align}
\begin{align}
    &\frac{R^{2}\varepsilon}{16\mu}\geq16\mu\|(Gx)^{\top} B\|_{L^{\infty}}^{2}+2c_{d}\mu\Big(\sup_{x'\in\mathbb{R}^{d-1}}(|x'||\nabla G(x')|)\|\nabla^{2}G\|_{L^{\infty}}\\&+\sup_{x'\in\mathbb{R}^{d-1}}|x'||\nabla^{3}G(x')|+2\|\nabla G\|^{2}_{L^{\infty}}+2\Lambda\|\nabla^{2}G\|_{L^{\infty}}\Big).
\end{align}
Then the Carleman estimate reads as
\begin{align}
R\sqrt{\frac{\varepsilon}{16\mu}}\big\|e^{\varphi}h\big\|_{L^{2}([0,1]_{t}\times\mathbb{R}^{d})}\leq\big\|e^{\varphi}(\partial_{t}-\operatorname{div}_{A}(G\nabla_{A}))h\big\|_{L^{2}([0,1]_{t}\times\mathbb{R}^{d})}\label{Carleman for parabolic}
\end{align}
where 
\begin{equation}
    \varphi=\mu|x+Rt(1-t)\textbf{e}_{1}|^{2}+\frac{R^{2}t(1-t)(1-2t)}{6}-\frac{(1+\varepsilon+N\mu^{2}\varepsilon_{1})R^{2}t(1-t)}{16\mu}.
\end{equation}
Therefore, we get the required Carleman estimate and complete the proof.
\end{proof}
Now, we are in position to prove Theorem \ref{parabolic} via the Carleman estimate established above. 
\begin{proof}[Proof of Theorem \ref{parabolic}]
    We choose the test function
\begin{align}
h(x,t) = \theta_M(x)\,\zeta_R(t)\,u(x,t),\label{test-function}    
\end{align}
where $M, R>0$ are parameters to be chosen later. Here, $\theta_M \in C_0^\infty(\mathbb{R}^d)$ satisfies $\theta_M(x)=1$ for $|x|\le M$, $\theta_M(x)=0$ for $|x|\ge 2M$, and
\[
|\partial^k \theta_M(x)| \le \frac{C_k}{M^k},
\]
with $M>0$ to be chosen later. Meanwhile, $\zeta_R \in C_0^\infty(0,1)$ satisfies $0\le \zeta_R \le 1$, with
\[
\zeta_R(t)=1 \quad \text{for } t\in \Big[\tfrac{1}{R},\,1-\tfrac{1}{R}\Big],
\]
and
\[
\zeta_R(t)=0 \quad \text{for } t\in \Big[0,\tfrac{1}{2R}\Big]\cup\Big[1-\tfrac{1}{2R},1\Big],
\]
and enjoys the  bound
\[
|\partial^k \zeta_R(t)| \le C_k R^k.
\]

By calculation, we have 
\begin{align}
    \big(\partial_{t}-\operatorname{div}_{A}(G\nabla_{A}\cdot)\big)h=&\theta_{M}(x)(\partial_{t}\zeta_{R})u+\theta_{M}\zeta_{R}\partial_{t}u-\zeta_{R}(t)\partial_{j}g_{jk}\partial_{k}\theta_{M}u-\zeta_{R}(t)g_{jk}\partial_{j}\partial_{k}\theta_{M}u\nonumber\\&-\zeta_{R}(t)g_{jk}\partial_{k}\theta_{M}D_{j}u-\zeta_{R}(t)g_{jk}\partial_{j}\theta_{M}D_{k}u-\theta_{M}(x)\zeta_{R}(t)D_{j}(g_{jk}D_{k}u)\nonumber\\=&Vh+\theta_{M}(x)(\partial_{t}\zeta_{R})u-\zeta_{R}(t)\partial_{j}g_{jk}\partial_{k}\theta_{M}u-\zeta_{R}(t)g_{jk}\partial_{j}\partial_{k}\theta_{M}u\nonumber\\&-\zeta_{R}(t)g_{jk}\partial_{k}\theta_{M}D_{j}u-\zeta_{R}(t)g_{jk}\partial_{j}\theta_{M}D_{k}u.\label{parabolic ugly terms}
\end{align}
Taking $\mu\leq\frac{\gamma}{1+\varepsilon}$ for some $\varepsilon>0$, On the support of $\theta_M(x)\partial_t\zeta_R\cdot u$, by Young's inequality with $\varepsilon$ and $t(1-t)\sim\frac{1}{R}(1-\frac{1}{R})$, we obtain
\begin{align}
    &\mu|x+Rt(1-t)\textbf{e}_{1}|^{2}+\frac{R^{2}t(1-t)(1-2t)}{6}-\frac{(1+\varepsilon+N\mu^{2}\varepsilon_{1})R^{2}t(1-t)}{16\mu}\\
    \leq&\mu|x|^{2}+2\mu Rx\cdot \textbf{e}_{1}t(1-t)+\mu R^{2}t^{2}(1-t)^{2}+\frac{R}{6}+O(\frac{1}{R})\\ \leq &\mu|x|^{2}+\mu\varepsilon|x|^{2}+\mu\frac{1}{\varepsilon} R^{2}t^{2}(1-t)^{2}+\mu R^{2}t^{2}(1-t)^{2}+\frac{R}{6}+O(\frac{1}{R})\\ \sim&\mu(1+\varepsilon)|x|^{2}+\mu(\frac{1}{\varepsilon}+1)R^{2}\frac{1}{R^{2}}(1-\frac{1}{R})^{2}+\frac{R}{6}+O(\frac{1}{R})\\\sim&\mu(1+\varepsilon)|x|^{2}+\mu(\frac{1}{\varepsilon}+1)+\frac{R}{6}+O(\frac{1}{R})\\ \leq&\gamma|x|^{2}+\frac{\gamma}{\varepsilon}+\frac{R}{6}+O(\frac{1}{R}).\label{support3}
\end{align}
Further, we denote
\[
\begin{aligned}
\mathcal{I}_1 := {} & - \zeta_R(t)\,\partial_j g_{jk}\,\partial_k \theta_M\,u
- \zeta_R(t)\, g_{jk}\,\partial_j \partial_k \theta_M\,u  \\
& - \zeta_R(t)\, g_{jk}\,\partial_k \theta_M\, D_j u
- \zeta_R(t)\, g_{jk}\,\partial_j \theta_M\, D_k u,
\end{aligned}
\]
whose support satisfies
\[
\operatorname{supp} \mathcal{I}_1
\subset \Big[\tfrac{1}{2R},\,1-\tfrac{1}{2R}\Big]_t
\times \big( B_{2M} \setminus B_M \big).
\]
On the support of $\mathcal{I}_{1}$, it holds
\begin{align}
    &\mu|x+Rt(1-t)\textbf{e}_{1}|^{2}+\frac{R^{2}t(1-t)(1-2t)}{6}-\frac{R^{2}(1+\varepsilon+N\varepsilon_{1}\mu^{2})t(1-t)}{16\mu}\\
    \leq&\mu|x|^{2}+\mu\varepsilon|x|^{2}+\mu\frac{1}{\varepsilon}R^{2}t^{2}(1-t)^{2}+\mu R^{2}t^{2}(1-t)^{2}+\frac{R^{2}t(1-t)(1-2t)}{6}\\
    \leq&\mu(1+\varepsilon)|x|^{2}+\mu(1+\frac{1}{\varepsilon})R^{2}+R^{2}\\ 
    \leq&\gamma|x|^{2}+\frac{\gamma}{\varepsilon}R^{2}+R^{2}.\label{support4}
\end{align}

We now apply the Carleman estimate \eqref{Carleman for parabolic} to the test function \eqref{test-function} and it follows from \eqref{parabolic ugly terms},
\eqref{support3} and \eqref{support4}
\begin{align}
    &R\sqrt{\frac{\varepsilon}{16\mu}}\Big\|e^{\mu|x+Rt(1-t)\textbf{e}_{1}|^{2}+\frac{R^{2}t(1-t)(1-2t)}{6}-\frac{(1+\varepsilon+N\mu^{2}\varepsilon_{1})t(1-t)}{16\mu}}h(x,t)\Big\|_{L^{2}([0,1]\times\mathbb{R}^{d})}\\ \leq&\Big\|e^{\mu|x+Rt(1-t)\textbf{e}_{1}|^{2}+\frac{R^{2}t(1-t)(1-2t)}{6}-\frac{(1+\varepsilon+N\mu^{2}\varepsilon_{1})t(1-t)}{16\mu}}(\partial_{t}-\operatorname{div}_{A}(G\nabla_{A}\cdot))h\Big\|_{L^{2}([0,1]\times\mathbb{R}^{d})}\\
    \leq& \|V\|_{L^{\infty}([0,1]\times\mathbb{R}^{d})}\Big\|e^{\mu|x+Rt(1-t)\textbf{e}_{1}|^{2}+\frac{R^{2}t(1-t)(1-2t)}{6}-\frac{(1+\varepsilon+N\mu^{2}\varepsilon_{1})t(1-t)}{16\mu}}h\Big\|_{L^{2}([0,1]\times\mathbb{R}^{d})}\\&+Re^{\frac{\gamma}{\varepsilon}+\frac{R}{6}+1}\sup_{0\leq t\leq1}\|e^{\gamma|x|^{2}}u(t)\|_{L^{2}_{x}}\\&
    +M^{-1}e^{\frac{\gamma R^{2}}{\varepsilon}+R^{2}}\Big\|e^{\gamma|x|^{2}}(|u|+|\nabla_{A}u|)\Big\|_{L^{2}(\mathbb{R}^{d}\times[\frac{1}{2R},1-\frac{1}{2R}])}\label{parabolic last carleman}.
\end{align}

Choosing $R$ sufficiently large so that
\[
R\sqrt{\frac{\varepsilon}{16\mu}}
\ge 2\|V\|_{L^{\infty}([0,1]\times\mathbb{R}^d)},
\]
then the first term on the right-hand side of \eqref{parabolic last carleman}
can be absorbed into the left-hand side. Meanwhile, for any fixed $R$, the finiteness of \eqref{N gamma} implies that the third term in \eqref{parabolic last carleman}  tends to 
 $0$ as $M \to \infty$.

Let $\mathfrak{f}(\varepsilon)<(1-\varepsilon^{2})^2-\frac{N\varepsilon_1}{4} $ be a real-valued function such that $\mathfrak{f}(\varepsilon)\to 0^{+}$ as $\varepsilon \to 0$.
Notice that $h \equiv u$ in
\[
\Big[\tfrac{1-\varepsilon}{2},\,\tfrac{1+\varepsilon}{2}\Big]
\times B_{\frac{\mathfrak{f}(\varepsilon)R}{8}},
\]
we obtain
\begin{align}
    &\mu|x+Rt(1-t)\textbf{e}_{1}|^{2}+\frac{R^{2}t(1-t)(1-2t)}{6}-\frac{(1+\varepsilon+N\varepsilon_{1}\mu^{2})R^{2}t(1-t)}{16\mu}\\
    \geq& \frac{R^{2}}{16\mu}\Big\{16\mu^{2}\big[t^{2}(1-t)^{2}+2\frac{x}{R}\textbf{e}_{1}t(1-t)+\frac{|x|^{2}}{R^{2}}\big]-(1+\varepsilon+N\varepsilon_{1}\mu^{2})\frac{1}{4}+\frac{t(1-t)(1-2t)}{96}\mu\Big\}\\
    \geq&\frac{R^{2}}{16\mu}\Big\{16\mu^{2}[\frac{1}{16}(1-\varepsilon^{2})^{2}-2\frac{\mathfrak{f}(\varepsilon)}{8}\frac{1}{4}]-(1+\varepsilon+N\varepsilon_{1}\mu^{2})\frac{1}{4}-\frac{\varepsilon-\varepsilon^{3}}{384}\mu\Big\}\\
    \geq&\frac{R^{2}}{16\mu}\Big\{\mu^{2}\big[(1-\varepsilon^{2})^{2}-\mathfrak{f}(\varepsilon)-\frac{1}{4}N\varepsilon_{1}\big]-\frac{1+\varepsilon}{4}-\frac{\varepsilon-\varepsilon^{3}}{384}\mu\Big\}\\
    =&\frac{R^{2}}{16\mu}\Big\{\mu^{2}\big[1-2\varepsilon^{2}+\varepsilon^{4}-\mathfrak{f}(\varepsilon)-\varepsilon-\frac{1}{4}N\varepsilon_{1}\big]-\frac{1+\varepsilon}{4}-\frac{\varepsilon}{4}(\frac{\varepsilon-\varepsilon^{3}}{384})^{2}\Big\}.
\end{align}

Let $\mu$ satisfy
\[
\mu^{2}\bigl[1 - 2\varepsilon^{2} + \varepsilon^{4}-\varepsilon - \mathfrak{f}(\varepsilon)-\frac{1}{4}N\varepsilon_{1}\bigr]
- \frac{1+\varepsilon+\varepsilon(\frac{\varepsilon-\varepsilon^3}{384})^{2}}{4} > 0,
\]
which implies 
\begin{align}
    \mu^{2}>\frac{1+\varepsilon+\varepsilon(\frac{\varepsilon-\varepsilon^3}{384})^2}{4(1-\frac{N\varepsilon_{1}}{4}-\varepsilon-2\varepsilon^2+\varepsilon^4-\mathfrak{f}(\varepsilon))}>\frac{1}{4(1-\frac{N\varepsilon_{1}}{4})}.
\end{align}
Thus,
\begin{align}
    \gamma>\mu>\frac{1}{2}\sqrt{\frac{1}{1-\frac{N\varepsilon_{1}}{4}}}=\frac{1}{2}\Big(\sum_{k=0}^\infty\big(\frac{N\varepsilon_1}{4}\big)^k\Big)^{\frac{1}{2}}>\frac{1}{2}+\frac{N\varepsilon_{1}}{16}.
\end{align}

As a consequence, we obtain
\begin{align}
\frac{R}{8}\sqrt{\frac{\varepsilon}{\mu}}
\,e^{\mathcal{C}_{\mu,\varepsilon}R^{2}}
\|u\|_{L^{2}\big([\tfrac{1-\varepsilon}{2},\,\tfrac{1+\varepsilon}{2}]
\times B_{\mathfrak{f}(\varepsilon)R}\big)}
\le
R e^{\frac{\gamma}{\varepsilon}+R+1}
\sup_{t\in[0,1]}\|e^{\gamma |x|^{2}}u(t)\|_{L^{2}} .
\end{align}
By taking $R$ sufficiently large, we obtain
\[
u \equiv 0
\quad \text{in } \mathbb{R}^{d}\times
\Big[\tfrac{1-\varepsilon}{2},\,\tfrac{1+\varepsilon}{2}\Big].
\]

It remains to show that $u(0)=0$. 
Indeed, this implies that $u\equiv0$ in $\R^d\times[0,1]$. To achieve this, we apply Duhamel's principle and Gronwall inequality to deduce that $\|u(t)\|_{L^2}=0$ which implies that $u\equiv0$. Next, we turn to prove $u(0)=0$.
Using the condition \eqref{syl} in Hypothesis \ref{assum3} and following closely to Lemma 3.2 in \cite{BFGRV-JFA}, we deduce 
\begin{equation}
\|u(0)\|_{L^{2}}
\sim_{\mathcal{C}_{V}}
\|u(t)\|_{L^{2}},
\label{key relationship for last step1}
\end{equation}
where
\[
\mathcal{C}_{V}
=
\exp\!\Big(
\sup_{0\le t\le 1}
\|\operatorname{Im} V_2(t)\|_{L^{\infty}}
\Big),
\]
and recalling \eqref{N gamma}, we have
\begin{equation}
N_{\gamma}
\ge
\|e^{\gamma |x|^{2}}u(t)\|_{L^{2}(\mathbb{R}^{d})}
\ge
e^{\gamma \frac{R^{2}}{64}}
\|u(t)\|_{L^{2}(\mathbb{R}^{d}\setminus B_{\frac{R}{8}})},
\end{equation}
which implies
\begin{equation}
\|u(t)\|_{L^{2}(\mathbb{R}^{d}\setminus B_{\frac{R}{8}})}
\le
e^{-\gamma \frac{R^{2}}{64}}\, N_{\gamma}.
\end{equation}

Consequently,
\begin{align}
\|u(t)\|_{L^{2}(\mathbb{R}^{d})}
&\le
\|u(t)\|_{L^{2}(B_{\frac{R}{8}})}
+
\|u(t)\|_{L^{2}(\mathbb{R}^{d}\setminus B_{\frac{R}{8}})} \\
&\le
\|u(t)\|_{L^{2}(B_{\frac{R}{8}})}
+
e^{-\gamma \frac{R^{2}}{64}}\, N_{\gamma}.
\label{small domain control large domain1}
\end{align}

Then, integral on time interval $[\frac{1-\varepsilon}{2},\frac{1+\varepsilon}{2}]$, and using \eqref{key relationship for last step1}, \eqref{small domain control large domain1}, \eqref{reduction step at last}, we obtain
\begin{align}
   \varepsilon  R\sqrt{\frac{\varepsilon}{\mu}}e^{R^{2}\mathcal{C}_{\varepsilon,\mu}}\|u(0)\|_{L^{2}(\mathbb{R}^{d})}\lesssim&_{\mathcal{C}_{V}}R\sqrt{\frac{\varepsilon}{\mu}}e^{R^{2}\mathcal{C}_{\varepsilon,\mu}}\|u(t)\|_{L^{2}([\frac{1-\varepsilon}{2},\frac{1+\varepsilon}{2}]\times\mathbb{R}^{d})}\\
    \leq& R\sqrt{\frac{\varepsilon}{\mu}}e^{R^{2}\mathcal{C}_{\varepsilon,\mu}}\|u\|_{L^{2}([\frac{1-\varepsilon}{2},\frac{1+\varepsilon}{2}]\times B_{\frac{R}{8}})}+\varepsilon e^{-\gamma\frac{R^{2}}{64}}N_{\gamma}R\sqrt{\frac{\varepsilon}{\mu}}e^{R^{2}\mathcal{C}_{\varepsilon,\mu}}\\
    \leq& Re^{\frac{\gamma}{\varepsilon}+R+1}\sup_{0\leq t\leq1}\|e^{\gamma|x|^{2}}u(t)\|_{L^{2}(\mathbb{R}^{d})}+\varepsilon e^{-\gamma\frac{R^{2}}{64}}N_{\gamma}R\sqrt{\frac{\varepsilon}{\mu}}e^{R^{2}\mathcal{C}_{\varepsilon,\mu}}
\end{align}
which implies that
\begin{align}
    \|u(0)\|_{L^{2}(\mathbb{R}^{d})}\leq \mathfrak{C}_{\mu,\varepsilon}e^{-R^{2}\mathcal{C}_{\mu,\varepsilon}+R}+ e^{-\gamma\frac{R^{2}}{64}}N_{\gamma},\forall R>0\,\Rightarrow u(0)\equiv0. 
\end{align}
Hence, we finish the proof of Theorem \ref{parabolic}.

\end{proof}

\section*{Acknowledgements}
L. Fanelli and Y. Wang are partially supported by the Basque Government through the BERC 2022--2025 program and by the research project PID2024-155550NB-100 funded by MICIU/AEI/10.13039/501100011033 and FEDER/EU.\,\,
L. Fanelli is also supported by the project IT1615-22 funded by the Basque Government.\,\,
Y. Wang is also supported by a Juan de la Cierva fellowship funded by MICIU/AEI/10.13039/501100011033, under Grant JDC2024-053285-I.\,\,
J. Zheng was supported by  National key R\&D program of China: 2021YFA1002500 and  NSFC Grant 12271051.

\end{document}